\newcommand*{\mailto}[1]{\href{mailto:#1}{\nolinkurl{#1}}}
\newcommand{\arxiv}[1]{\href{http://arxiv.org/abs/#1}{arXiv:#1}}
\newcommand{\msc}[1]{\href{http://www.ams.org/msc/msc2010.html?t=&s=#1}{#1}}
\newtheorem{theorem}{Theorem}[section]
\newtheorem{lemma}[theorem]{Lemma}
\newtheorem{proposition}[theorem]{Proposition}
\newtheorem{corollary}[theorem]{Corollary}
\newtheorem{remark}[theorem]{Remark}
\newtheorem{example}{Example}[section]
\newcommand{\R}{{\mathbb R}}
\newcommand{\N}{{\mathbb N}}
\newcommand{\C}{{\mathbb C}}
\newcommand{\spr}[2]{\langle #1 , #2 \rangle}
\newcommand{\Qr}{\mathsf{q}}
\newcommand{\Wr}{\mathsf{w}}
\newcommand{\T}{\mathrm{T}}
\newcommand{\F}{\mathcal{F}}
\newcommand{\Sr}{\mathrm{s}}
\newcommand{\E}{\mathrm{e}}
\newcommand{\I}{\mathrm{i}}
\newcommand{\supp}{\mathrm{supp}}
\newcommand{\im}{\mathrm{Im}}
\newcommand{\re}{\mathrm{Re}}
\newcommand{\loc}{{\mathrm{loc}}}
\newcommand{\cc}{{\mathrm{c}}}
\newcommand{\sing}{{\mathrm{s}}}
\newcommand{\OO}{\mathcal{O}}
\newcommand{\ledot}{\,\cdot\,}
\newcommand{\redot}{\cdot\,}
\newcommand{\NL}{(0-)}
\newcommand{\NLz}{(z,0-)}
\newcommand{\qd}{{[1]}}
\newcommand{\Hast}{\dot{H}^1[0,L)}
\newcommand{\Hasto}{\dot{H}^1_0[0,L)}
\newcommand{\cH}{\mathcal{H}}
\newcommand{\Lmu}{L^2(\R;\mu)}
\newcommand{\dip}{\upsilon}
\newcommand{\eps}{\varepsilon}
\numberwithin{equation}{section}
\begin{document}

\title[Absolutely continuous spectrum]{On the absolutely continuous spectrum of~generalized~indefinite strings II}

\author[J.\ Eckhardt]{Jonathan Eckhardt}
\address{Department of Mathematical Sciences\\ Loughborough University\\ Loughborough LE11 3TU \\ UK\\  and  Faculty of Mathematics\\ University of Vienna\\ Oskar-Morgenstern-Platz 1\\ 1090 Wien\\ Austria}
\email{\mailto{J.Eckhardt@lboro.ac.uk};\ \mailto{jonathan.eckhardt@univie.ac.at}}
\urladdr{\url{http://homepage.univie.ac.at/jonathan.eckhardt/}}

\author[A.\ Kostenko]{Aleksey Kostenko}
\address{Faculty of Mathematics and Physics\\ University of Ljubljana\\ Jadranska 21\\ 1000 Ljubljana\\ Slovenia\\ and Faculty of Mathematics\\ University of Vienna\\ Oskar-Morgenstern-Platz 1\\ 1090 Wien\\ Austria}
\email{\mailto{Aleksey.Kostenko@fmf.uni-lj.si};\ \mailto{Oleksiy.Kostenko@univie.ac.at}}
\urladdr{\url{http://www.mat.univie.ac.at/~kostenko/}}
 
\author[T.\ Kukuljan]{Teo Kukuljan}
\address{Faculty of Mathematics and Physics\\ University of Ljubljana\\ Jadranska 21\\ 1000 Ljubljana\\ Slovenia}
\email{\mailto{teo.kukuljan@student.fmf.uni-lj.si}}


\thanks{{\it Research supported by the Austrian Science Fund (FWF) under Grants No.~P29299 (J.E.) and P28807 (A.K.) as well as by the Slovenian Research Agency (ARRS) under Grant No.\ J1-9104 (A.K.)}}

\keywords{Absolutely continuous spectrum, generalized indefinite strings}
\subjclass[2010]{Primary \msc{34L05}, \msc{34B07}; Secondary \msc{34L25}, \msc{37K15}}

\begin{abstract}
We continue to investigate absolutely continuous spectrum of generalized indefinite strings. 
By following an approach of Deift and Killip, we establish stability of the absolutely continuous spectra of two more model examples of generalized indefinite strings under rather wide perturbations. 
In particular, one of these results allows us to prove that the absolutely continuous spectrum of the isospectral problem associated with the two-component Camassa--Holm system in a certain dispersive regime is essentially supported on the set $(-\infty,-1/2]\cup [1/2,\infty)$.
\end{abstract}

\maketitle

\section{Introduction}

In this paper, we continue our study of the absolutely continuous part of the spectrum of {\em generalized indefinite strings} initiated in \cite{ACSpec}. 
We recall briefly (see Section~\ref{sec:prelim} for further details) that a generalized indefinite string is a triple $(L,\omega,\dip)$ such that $L\in(0,\infty]$, $\omega$ is a real distribution in $H^{-1}_{\loc}[0,L)$ and $\dip$ is a non-negative Borel measure on the interval $[0,L)$. 
 Associated with such a triple is the ordinary differential equation  
  \begin{align}\label{eqnDEIndStr}
  -f'' = z\, \omega f + z^2 \dip f
 \end{align}
 on $[0,L)$, where $z$ is a complex spectral parameter. 
 Spectral problems of this type are of interest for at least two reasons: 
Firstly, they constitute a canonical model for operators with simple spectrum; see \cite{IndefiniteString, IndMoment}. 
Secondly, they are of relevance in connection with certain completely integrable nonlinear wave equations, for which these kinds of spectral problems arise as isospectral problems.
Arguably the most prominent examples hereby are the Camassa--Holm equation \cite{caho93} and its two-component generalization \cite{coiv08}
\begin{align}
 u_t - u_{xxt} & = 2u_x u_{xx}  - 3uu_x  +  u u_{xxx} - \rho \rho_x, &
  \rho_t & = - (u\rho)_x.
\end{align} 
 Our main result in this regard can be deduced readily from Theorem~\ref{thmApp2CH} by standard arguments:

\begin{theorem}\label{thm2CHac}
Let $u$ be a real-valued function in $H^1(\R)$ and let $\dip$ be a non-negative Borel measure on $\R$ such that its singular part is finite and $\rho-1$ belongs to $L^2(\R)$, where $\rho$ is the square root of the Radon--Nikod\'ym derivative of $\dip$.
Then the essential spectrum of the spectral problem
\begin{align}\label{eq:CHspec}
 - g'' + \frac{1}{4}g & = z\,\omega\, g + z^2 \dip\, g,  & \omega & = u - u'',
\end{align}
 coincides with the set $(-\infty,-1/2]\cup[1/2,\infty)$ and the absolutely continuous spectrum is of multiplicity two and essentially supported on $(-\infty,-1/2]\cup[1/2,\infty)$. 
\end{theorem}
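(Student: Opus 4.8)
The plan is to transfer the problem to a pair of generalized indefinite strings via a Liouville‑type transformation, apply Theorem~\ref{thmApp2CH}, and transport the conclusions back; the intervening steps are the ``standard arguments'' alluded to in the text. First I would impose a suitable self‑adjoint boundary condition at the single point $x=0$, decoupling \eqref{eq:CHspec} into two half‑line problems on $(-\infty,0]$ and on $[0,\infty)$. The self‑adjoint realization on $\R$ and the orthogonal sum of the two half‑line realizations then differ by a perturbation that is of finite rank in the resolvent sense, so their essential spectra coincide and, by the Kato--Rosenblum theorem, their absolutely continuous parts are unitarily equivalent; in particular the absolutely continuous multiplicity on $\R$ is the sum of the two half‑line multiplicities, and since each half‑line problem is in the limit‑point case at its infinite endpoint this sum is at most $2$. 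The reflection $x\mapsto -x$ carries the left half‑line problem to a right half‑line problem of the same form, so it is enough to treat \eqref{eq:CHspec} on $[0,\infty)$.

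On $[0,\infty)$ I would apply the substitution $g(x)=\E^{-x/2}f(\xi)$ with $\xi=\xi(x):=\E^{x}-1$, a diffeomorphism of $[0,\infty)$ onto $[0,\infty)$. Since $\E^{-x/2}$ solves $-w''+\tfrac14 w=0$, the $\tfrac14 g$‑term is absorbed and a direct computation turns \eqref{eq:CHspec} into $-f''=z\,\hat\omega\,f+z^2\,\hat\dip\,f$, that is, into \eqref{eqnDEIndStr} for a generalized indefinite string $(\infty,\hat\omega,\hat\dip)$ in which $\hat\omega$ is a distribution built from $u-u''$ and $d\hat\dip$ equals $\rho(x(\xi))^2(1+\xi)^{-2}\,d\xi$ plus the image of the singular part of $\dip$. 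For $u\equiv 0$, $\rho\equiv 1$ this reduces to the model string underlying Theorem~\ref{thmApp2CH}, namely $\hat\omega=0$ and $d\hat\dip=(1+\xi)^{-2}\,d\xi$, whose operator is, via the inverse substitution, unitarily equivalent to a self‑adjoint realization of $-\frac{d^2}{dx^2}+\frac14$ on $[0,\infty)$ and hence has essential spectrum $(-\infty,-1/2]\cup[1/2,\infty)$ and absolutely continuous spectrum of multiplicity one essentially supported there.

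The substantive step is to verify that the hypotheses on $(u,\dip)$ are carried by this substitution into the admissibility assumptions of Theorem~\ref{thmApp2CH}. For instance, writing $\hat\rho$ for the square root of the Radon--Nikod\'ym derivative of $\hat\dip$, one has $\hat\rho(\xi)-(1+\xi)^{-1}=\big(\rho(x(\xi))-1\big)(1+\xi)^{-1}$, and changing variables back gives $\int_0^\infty|\hat\rho(\xi)-(1+\xi)^{-1}|^2\,d\xi=\int_0^\infty|\rho(x)-1|^2\E^{-x}\,dx$, which is finite whenever $\rho-1\in L^2(\R)$; likewise a finite singular part of $\dip$ yields a finite singular part of $\hat\dip$, and $u\in H^1(\R)$ places the relevant part of $\hat\omega$ into the admissible class. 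This weight bookkeeping --- reconciling the plain $L^2(\R)$ and $H^1(\R)$ hypotheses with the weighted conditions natural for the model measure $(1+\xi)^{-2}\,d\xi$, and making sense of the distributional coefficient $u-u''$ (and its behaviour near $\xi=\infty$) after the change of variables --- is the step I expect to require the real work, and the place where a hidden obstruction, if any, would surface.

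Granting the translation of hypotheses, Theorem~\ref{thmApp2CH} gives that $(\infty,\hat\omega,\hat\dip)$ has essential spectrum $(-\infty,-1/2]\cup[1/2,\infty)$ and absolutely continuous spectrum essentially supported on this set, necessarily of multiplicity one. Because the substitution induces a unitary equivalence of the operator realizations that leaves the spectral parameter $z$ unchanged, it preserves both $\sigma_{\ess}$ and the absolutely continuous part together with multiplicities; hence \eqref{eq:CHspec} on $[0,\infty)$, and by reflection on $(-\infty,0]$, has essential spectrum $(-\infty,-1/2]\cup[1/2,\infty)$ and absolutely continuous spectrum of multiplicity one essentially supported there. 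Inserting these into the decoupling of the first step yields $\sigma_{\ess}=(-\infty,-1/2]\cup[1/2,\infty)$ for \eqref{eq:CHspec} on $\R$ and an absolutely continuous spectrum essentially supported on $(-\infty,-1/2]\cup[1/2,\infty)$ of multiplicity $1+1=2$ there, which together with the a priori bound of $2$ is precisely the assertion.
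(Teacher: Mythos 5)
Your proposal is correct and follows essentially the same route as the paper: the full-line problem is decoupled at the origin into two half-line problems (a finite-rank resolvent perturbation, which preserves the essential spectrum and the absolutely continuous parts and accounts for the multiplicity $1+1=2$), and each half-line problem is converted by the substitution $g(x)=\E^{-x/2}f(\E^{x}-1)$ into exactly the generalized indefinite string of Section~\ref{secAPP}, to which Theorem~\ref{thmApp2CH} (i.e.\ Theorem~\ref{thmalpha} with $c=u(0)$, $\alpha=1/2$, $\eta=1$) applies. The only quibble is that your displayed integral for $\hat\rho$ omits the weight $\xi$ demanded by \eqref{eqnCondSalpha}; the weighted integral is likewise bounded by $\int_0^\infty|\rho(x)-1|^2\,dx$, exactly as in the paper's verification.
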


\noindent
Spectral problems of the form \eqref{eq:CHspec} are of the same importance for the conservative Camassa--Holm flow \cite{chlizh06, coiv08, ConservCH, LagrangeCH, ConservMP, hoiv11} as the one-dimensional Schr\"odinger operator for the Korteweg--de Vries equation. 
Theorem~\ref{thm2CHac} is relevant because it covers a natural phase space for the two-component  Camassa--Holm system; see also \cite{grhora12}. 
 
 Results on spectral types, even for the special case of a Krein string
   \begin{align}\label{eqnClaS}
  -f'' = z\, \omega f,
 \end{align}
that is, when $\omega$ is a non-negative Borel measure and $\dip$ vanishes identically, are rather scarce (however, let us mention the two recent articles \cite{bd17} and \cite{bd19} by Bessonov and Denisov). 
 The main reason for this lies in the fact that the spectral parameter appears in the {\em wrong} place, which does not allow to view~\eqref{eqnClaS} as an additive perturbation immediately.  
Our approach to the absolutely continuous spectrum of generalized indefinite strings follows \cite{ACSpec} and is based on the elegant ideas of Deift and Killip \cite{deki99}. 
In order to implement this approach, we need two main ingredients:
The first ingredient is a continuity property for the correspondence between generalized indefinite strings and their associated Weyl--Titchmarsh functions  (see Section~\ref{sec:prelim} for more details) obtained in \cite[Proposition~6.2]{IndefiniteString}. 
The second ingredient is a so-called {\em dispersion relation} or {\em trace formula}, which provides a relation between the spectral/scattering data and the coefficients in the differential equation, and hence allows to {\em control} the spectral measure by means of the coefficients. 
Such trace formulas are also of interest in connection with the aforementioned nonlinear wave equations because they give rise to conserved quantities of the flow. 
However, even for the two-component Camassa--Holm system, such trace formulas have not yet been stated explicitly in the literature to the best of our knowledge (although they can be derived from \cite{hoiv11,iv06} for sufficiently smooth coefficients). 
Due to the low regularity of our coefficients, the derivation of corresponding trace formulas will require more efforts, even compared to the cases considered in \cite{ACSpec}.

In conclusion, let us sketch the content of the article. 
Section~\ref{sec:prelim} is of preliminary character and collects necessary notions and facts from the spectral theory of generalized indefinite strings. 
Section~\ref{secMR} contains the statements of our main results (Theorem~\ref{thm0} and Theorem~\ref{thmalpha}) about the absolutely continuous spectrum for certain classes of generalized indefinite strings, which are rather strong perturbations of explicitly solvable models (Example~\ref{exa0} and Example~\ref{exaalpha}).
Although we will not present the necessary details here, these perturbations can indeed be interpreted in a certain way as additive perturbations, which are only of Hilbert--Schmidt class in general however. 
Our main theorems will be proved in Sections~\ref{secPr1} and~\ref{secPr2} respectively. 
 Even though we will generally follow the approach of \cite{ACSpec} based on the method by Deift and Killip from \cite{deki99}, the necessary ingredients are not readily available for the present class of coefficients and need to be established first. 
In the final section, we will then show how corresponding results for the spectral problem~\eqref{eq:CHspec} can be derived readily from Theorem~\ref{thmalpha}.

\section{Generalized indefinite strings}\label{sec:prelim} 

We will first introduce several spaces of functions and distributions.  
 For every fixed $L\in(0,\infty]$, we denote with $H^1_{\loc}[0,L)$, $H^1[0,L)$ and $H^1_{\cc}[0,L)$ the usual Sobolev spaces. 
 To be precise, this means  
\begin{align}
H^1_{\loc}[0,L) & =  \lbrace f\in AC_{\loc}[0,L) \,|\, f'\in L^2_{\loc}[0,L) \rbrace, \\
 H^1[0,L) & = \lbrace f\in H^1_{\loc}[0,L) \,|\, f,\, f'\in L^2[0,L) \rbrace, \\ 
 H^1_{\cc}[0,L) & = \lbrace f\in H^1[0,L) \,|\, \supp(f) \text{ compact in } [0,L) \rbrace.
\end{align}
The space of distributions $H^{-1}_{\loc}[0,L)$ is the topological dual of $H^1_{\cc}[0,L)$. 
One notes that the mapping $\Qr\mapsto\chi$, defined by
 \begin{align}
    \chi(h) = - \int_0^L \Qr(x)h'(x)dx, \quad h\in H^1_{\cc}[0,L),
 \end{align} 
 establishes a one-to-one correspondence between $L^2_{\loc}[0,L)$ and $H^{-1}_{\loc}[0,L)$. 
The unique function $\Qr\in L^2_{\loc}[0,L)$ corresponding to some distribution $\chi\in H^{-1}_{\loc}[0,L)$ in this way will be referred to as {\em the normalized anti-derivative} of $\chi$.
 We say that a distribution in $H^{-1}_{\loc}[0,L)$ is {\em real} if its normalized anti-derivative is real-valued almost everywhere on $[0,L)$.  

A particular kind of distribution in $H^{-1}_{\loc}[0,L)$ arises from Borel measures on the interval $[0,L)$.
 In fact, if $\chi$ is a complex-valued Borel measure on $[0,L)$, then we will identify it with the distribution in $H^{-1}_{\loc}[0,L)$ given by  
 \begin{align}
  h \mapsto \int_{[0,L)} h\,d\chi. 
 \end{align}
The normalized anti-derivative $\Qr$ of such a $\chi$ is simply given by the left-continuous distribution function 
 \begin{align}
 \Qr(x)=\int_{[0,x)}d\chi
 \end{align}
 for almost all $x\in [0,L)$, as an integration by parts (use, for example, \cite[Exercise~5.8.112]{bo07}, \cite[Theorem~21.67]{hest65}) shows.   

 In order to obtain a self-adjoint realization of the differential equation~\eqref{eqnDEIndStr} in a suitable Hilbert space later, we also introduce the function space  
\begin{align}
\Hast & = \begin{cases} \lbrace f\in H^1_{\loc}[0,L) \,|\, f'\in L^2[0,L),~ \lim_{x\rightarrow L} f(x) = 0 \rbrace, & L<\infty, \\ \lbrace f\in H^1_{\loc}[0,L) \,|\, f'\in L^2[0,L) \rbrace, & L=\infty, \end{cases} 
\end{align}
 as well as the linear subspace  
\begin{align}
 \Hasto & = \lbrace f\in \Hast \,|\, f(0) = 0 \rbrace, 
\end{align}
 which turns into a Hilbert space when endowed with the scalar product 
 \begin{align}\label{eq:normti}
 \spr{f}{g}_{\Hasto} = \int_0^L f'(x) g'(x)^\ast dx, \quad f,\, g\in\Hasto.
 \end{align}
Here and henceforth, we will use a star to denote complex conjugation.  
The space $\Hasto$ can be viewed as a completion with respect to the norm induced by~\eqref{eq:normti} of the space of all smooth functions which have compact support in $(0,L)$. 
In particular, the space $\Hasto$ coincides algebraically and topologically with the usual Sobolev space $H^1_0[0,L)$ when $L$ is finite. 

A generalized indefinite string is a triple $(L,\omega,\dip)$ such that $L\in(0,\infty]$, $\omega$ is a real distribution in $H^{-1}_{\loc}[0,L)$ and $\dip$ is a non-negative Borel measure on the interval $[0,L)$.  
The corresponding normalized anti-derivative of the distribution $\omega$ will always be denoted with $\Wr$ in the following. 
 Associated with such a generalized indefinite string is the inhomogeneous differential equation
 \begin{align}\label{eqnDEinho}
  -f''  = z\, \omega f + z^2 \dip f + \chi, 
 \end{align}
 where $\chi$ is a distribution in $H^{-1}_{\loc}[0,L)$ and $z$ is a complex spectral parameter. 
 Of course, this differential equation has to be understood in a weak sense:   
  A solution of~\eqref{eqnDEinho} is a function $f\in H^1_{\loc}[0,L)$ such that 
 \begin{align}
  \Delta_f h(0) + \int_{0}^L f'(x) h'(x) dx = z\, \omega(fh) + z^2 \dip(fh) +  \chi(h), \quad h\in H^1_{\cc}[0,L),
 \end{align}
 for some constant $\Delta_f\in\C$. 
 In this case, the constant $\Delta_f$ is uniquely determined and will be denoted with $f'\NL$ for apparent reasons. 
 
 With this notion of solution, we are able to introduce the fundamental system of solutions $\theta(z,\redot)$, $\phi(z,\redot)$ of the homogeneous differential equation
   \begin{align}\label{eqnDEho}
  -f'' = z\, \omega f + z^2 \dip f
 \end{align}
  satisfying the initial conditions
 \begin{align}
  \theta(z,0)& = \phi'\NLz =1, &  \theta'\NLz & = \phi(z,0) =0,
 \end{align}
 for every $z\in\C$; see \cite[Lemma~3.2]{IndefiniteString}. 
 Even though the derivatives of these functions are only locally square integrable in general, there are unique left-continuous functions $\theta^\qd(z,\redot)$, $\phi^\qd(z,\redot)$ on $[0,L)$ such that 
 \begin{align}
   \theta^\qd(z,x) & = \theta'(z,x) + z\Wr(x)\theta(z,x), & \phi^\qd(z,x) & = \phi'(z,x) + z\Wr(x)\phi(z,x),
 \end{align}
 for almost all $x\in[0,L)$;  see \cite[Equation~(4.12)]{IndefiniteString}.
 These functions will henceforth be referred to as {\em quasi-derivatives} of the solutions $\theta(z,\redot)$, $\phi(z,\redot)$. 
  It follows from \cite[Corollary~3.5]{IndefiniteString} that the functions 
   \begin{align}
   z & \mapsto \theta(z,x), & z & \mapsto \theta^\qd(z,x), & z & \mapsto \phi(z,x), & z & \mapsto \phi^\qd(z,x), 
 \end{align}
 are real entire for every fixed $x\in[0,L)$.
  At the origin, when $z$ is zero, one readily infers that our fundamental system is given explicitly by
 \begin{align}\label{eqncsatzero}
   \theta(0,x) & = 1, & \theta^\qd(0,x) & = 0, & \phi(0,x) & = x, & \phi^\qd(0,x) & = 1, 
 \end{align}
 for all $x\in[0,L)$. 
 More crucially, we will furthermore require the following formulas for the derivatives of these functions with respect to the spectral parameter at the origin, proved in \cite[Section~4]{ACSpec}. 
 Here, let us note that differentiation with respect to the spectral parameter will be denoted with a dot and is always meant to be done after taking quasi-derivatives.
 
 \begin{proposition}\label{propFS}
  For every $x\in[0,L)$, we have   
  \begin{align}
    \label{eqndtheta0} \dot{\theta}(0,x) & =  - \int_0^x \Wr(t)dt, &   \dot{\theta}^\qd(0,x) & =  0, \\
    \label{eqndphi0} \dot{\phi}(0,x) & = \int_0^x \int_0^t \Wr(s)ds\, dt - \int_0^x \Wr(t) t\, dt, &   \dot{\phi}^\qd(0,x) & = \int_0^x \Wr(t)dt,
  \end{align}
  as well as  
  \begin{align}
    \label{eqnddtheta0}  \ddot{\theta}(0,x) & = \biggl(\int_0^x \Wr(t)dt\biggr)^2 - 2 \int_0^x \int_0^t \Wr(s)^2 ds\,dt - 2 \int_0^x \int_{[0,t)} d\dip\,dt,                                                   \\
    \label{eqnddthetap0} \ddot{\theta}^\qd(0,x) & = -2 \int_0^x \Wr(t)^2 dt -2 \int_{[0,x)} d\dip,                                                    \\
    \label{eqnddphip0} \ddot{\phi}^\qd(0,x) & = \biggl(\int_0^x \Wr(t)dt\biggr)^2 - 2 \int_0^x \Wr(t)^2 t\, dt -2 \int_{[0,x)} t\, d\dip(t).                                                 
  \end{align}
 \end{proposition}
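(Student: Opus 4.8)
The plan is to deduce all six formulas from the system of Volterra integral equations characterizing solutions of~\eqref{eqnDEho} together with their quasi-derivatives (see~\cite[Section~3]{IndefiniteString}). With $\Wr$ the normalized anti-derivative of $\omega$, the fundamental solutions and their quasi-derivatives satisfy
\begin{align}
 \theta(z,x) & = 1 + \int_0^x \bigl(\theta^\qd(z,t) - z\,\Wr(t)\,\theta(z,t)\bigr)\,dt, \\
 \theta^\qd(z,x) & = z\int_0^x \Wr(t)\,\theta^\qd(z,t)\,dt - z^2\int_0^x \Wr(t)^2\,\theta(z,t)\,dt - z^2\int_{[0,x)}\theta(z,t)\,d\dip(t),
\end{align}
together with the analogous pair of identities for $(\phi,\phi^\qd)$, where the initial values $1$ and $0$ are replaced by $0$ and $1$ respectively. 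These follow by inserting $f^\qd = f' + z\Wr f$ into the weak form of~\eqref{eqnDEho} and using $\Wr' = \omega$, and upon setting $z = 0$ they reduce to~\eqref{eqncsatzero}.

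Since the functions $z\mapsto\theta(z,x)$, $z\mapsto\theta^\qd(z,x)$, $z\mapsto\phi(z,x)$, $z\mapsto\phi^\qd(z,x)$ are entire (by~\cite[Corollary~3.5]{IndefiniteString}) and, by the usual Gronwall estimate for the Volterra system above, bounded uniformly in $x$ on each compact subinterval of $[0,L)$, one may differentiate these integral equations with respect to $z$ under the integral sign; for the $\dip$-term this is legitimate because $\dip$ is finite on compact subintervals of $[0,L)$ while the integrand is analytic in $z$ and bounded uniformly in $t$. Differentiating once and setting $z = 0$, every summand with a factor $z$ or $z^2$ drops out, leaving
\begin{align}
 \dot{\theta}(0,x) & = \int_0^x \bigl(\dot{\theta}^\qd(0,t) - \Wr(t)\,\theta(0,t)\bigr)\,dt, & \dot{\theta}^\qd(0,x) & = \int_0^x \Wr(t)\,\theta^\qd(0,t)\,dt,
\end{align}
and the corresponding identities for $\phi$. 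Inserting the values~\eqref{eqncsatzero} now yields~\eqref{eqndtheta0} and~\eqref{eqndphi0} at once.

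For the second-order formulas one differentiates the integral equations a second time, applies the Leibniz rule to the products $z\Wr\theta^\qd$, $z^2\Wr^2\theta$, $z^2\theta$ (and their $\phi$-analogues) and evaluates at $z = 0$; a short computation gives, for instance,
\begin{align}
 \ddot{\theta}^\qd(0,x) & = 2\int_0^x \Wr(t)\,\dot{\theta}^\qd(0,t)\,dt - 2\int_0^x \Wr(t)^2\,\theta(0,t)\,dt - 2\int_{[0,x)}\theta(0,t)\,d\dip(t),
\end{align}
and likewise for $\ddot{\theta}(0,x)$ and $\ddot{\phi}^\qd(0,x)$ in terms of the zeroth- and first-order data found above; in particular, $\dip$ enters only here because in the integral equation it is attached to the factor $z^2$. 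Substituting the values from~\eqref{eqncsatzero}, \eqref{eqndtheta0} and~\eqref{eqndphi0} and simplifying iterated integrals of $\Wr$ by means of the elementary identity $2\int_0^x \Wr(t)\bigl(\int_0^t\Wr(s)\,ds\bigr)\,dt = \bigl(\int_0^x\Wr(t)\,dt\bigr)^2$ then produces~\eqref{eqnddtheta0},~\eqref{eqnddthetap0} and~\eqref{eqnddphip0}. The only genuine obstacle is the justification of differentiation under the integral sign, together with the measure-theoretic bookkeeping for the $\dip$-term; everything else is a routine, if somewhat lengthy, computation.
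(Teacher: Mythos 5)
Your proposal is correct: the Volterra system you write down for $(\theta,\theta^\qd)$ and $(\phi,\phi^\qd)$ is exactly the one from \cite[Section~3]{IndefiniteString}, and differentiating it in $z$ under the integral sign and evaluating at $z=0$ reproduces all six formulas (I checked the bookkeeping, including the $\dip$-terms, which indeed only survive at second order because of the $z^2$ factor). This is essentially the same argument as the one the paper defers to in \cite[Section~4]{ACSpec}, so there is nothing to add beyond the routine justification of differentiation under the integral sign, which you address adequately.
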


 A generalized indefinite string $(L,\omega,\dip)$ gives rise to a self-adjoint linear relation in a suitable Hilbert space. 
 In order to introduce this object, we consider the space 
 \begin{align}
 \cH = \Hasto\times L^2([0,L);\dip),
\end{align}
which turns into a Hilbert space when endowed with the scalar product
\begin{align}
 \spr{f}{g}_{\cH} = \int_0^L f_1'(x) g_1'(x)^\ast dx + \int_{[0,L)} f_2(x) g_2(x)^\ast d\dip(x), \quad f,\, g\in \cH.
\end{align} 
 The respective components of some vector $f\in\cH$ are hereby always denoted by adding subscripts, that is, with $f_1$ and $f_2$.  
Now the linear relation $\T$ in the Hilbert space $\cH$ is defined by saying that some pair $(f,g)\in\cH\times\cH$ belongs to $\T$ if and only if  the two equations 
\begin{align}\label{eqnDEre1}
-f_1'' & =\omega g_{1} + \dip g_{2}, &  \dip f_2 & =\dip g_{1},
\end{align}
hold. 
In order to be precise, the right-hand side of the first equation in~\eqref{eqnDEre1} has to be understood as the $H^{-1}_{\loc}[0,L)$ distribution given by 
\begin{align}
 h \mapsto \omega(g_1h) + \int_{[0,L)} g_2 h\, d\dip. 
\end{align} 
 Moreover, the second equation in~\eqref{eqnDEre1} holds if and only if $f_2$ is equal to $g_1$ almost everywhere on $[0,L)$ with respect to the measure $\dip$. 
  The linear relation $\T$ turns out to be self-adjoint in the Hilbert space $\cH$; see \cite[Theorem~4.1]{IndefiniteString}.
 
   A central object in the spectral theory for the linear relation $\T$ is the associated {\em Weyl--Titchmarsh function} $m$. 
   This function can be defined on $\C\backslash\R$ by 
 \begin{align}\label{eqnmdef}
  m(z) =  \frac{\psi'\NLz}{z\psi(z,0)},\quad z\in\C\backslash\R,
 \end{align} 
 where $\psi(z,\redot)$ is the unique (up to constant multiples) non-trivial solution of the differential equation~\eqref{eqnDEho} which lies in $\Hast$ and $L^2([0,L);\dip)$, guaranteed to exist by \cite[Lemma~4.2]{IndefiniteString}. 
 It has been shown in \cite[Lemma~5.1]{IndefiniteString} that the Weyl--Titchmarsh function $m$ is a Herglotz--Nevanlinna function, that is, it is analytic, maps the upper complex half-plane $\C_+$ into the closure of the upper complex half-plane and satisfies the symmetry relation
 \begin{align}\label{eqnHNsym}
  m(z)^\ast = m(z^\ast), \quad z\in\C\backslash\R. 
 \end{align}
For this reason, the Weyl--Titchmarsh function $m$ admits an integral representation, which takes the form 
\begin{align}\label{eqnWTmIntRep}
 m(z) = c_1 z + c_2 - \frac{1}{Lz} +  \int_\R \frac{1}{\lambda-z} - \frac{\lambda}{1+\lambda^2}\, d\mu(\lambda), \quad z\in\C\backslash\R, 
\end{align}
for some constants $c_1$, $c_2\in\R$ with $c_1\geq0$ and a non-negative Borel measure $\mu$ on $\R$ with $\mu(\lbrace0\rbrace)=0$ for which the integral  
\begin{align}
 \int_\R \frac{d\mu(\lambda)}{1+\lambda^2} 
\end{align}
is finite. 
Here we employ the convention that whenever an $L$ appears in a denominator, the corresponding fraction has to be interpreted as zero if $L$ is infinite.

The measure $\mu$ turns out to be a {\em spectral measure} for the linear relation $\T$ in the sense that the operator part of $\T$ is unitarily equivalent to multiplication with the independent variable in $\Lmu$; see \cite[Theorem~5.8]{IndefiniteString}.
 Of course, this establishes an immediate connection between the spectral properties of the linear relation $\T$ and the measure $\mu$. 
 For example, the spectrum of $\T$ coincides with the topological support of $\mu$ and thus can be read off the singularities of $m$ (more precisely, the function $m$ admits an analytic continuation away from the spectrum of $\T$). 

For the sake of simplicity, we shall always mean the spectrum of the corresponding linear relation when we speak of the spectrum of a generalized indefinite string in the following.
The same convention applies to the various spectral types.

 \section{Absolutely continuous spectrum}\label{secMR}

In general, any kind of (simple) spectrum can arise from a generalized indefinite string; see \cite[Theorem~6.1]{IndefiniteString}.
Here we are interested in the absolutely continuous spectrum of a particular class of generalized indefinite strings, which are suitable perturbations of the following explicitly solvable case. 
  
  \begin{example}\label{exa0}
   Let $S_0$ be the generalized indefinite string $(L_0,\omega_0,\dip_0)$ such that $L_0$ is infinite, the distribution $\omega_0$ is identically zero and the measure $\dip_0$ is the Lebesgue measure on $[0,\infty)$.
   We note that under these assumptions, the corresponding differential equation~\eqref{eqnDEho} simply reduces to 
       \begin{align}
       - f'' = z^2f.
       \end{align}
  For every $z\in\C\backslash\R$, the function $\psi_0(z,\redot)$  given by\footnote{We denote with $\C_+$ the (open) upper complex half-plane and with $\C_-$ the (open) lower complex half-plane.}
    \begin{align}\label{eq:psi_0}
      \psi_0(z,x) = \begin{cases} \E^{\I zx}, &  x\in[0,\infty),~z\in\C_+,  \\ \E^{-\I zx}, & x\in[0,\infty),~ z\in\C_-, \end{cases}
    \end{align}   
    is a solution of this differential equation which lies in $\dot{H}^1[0,\infty)$ and $L^2[0,\infty)$.
    Consequently,  the corresponding Weyl--Titchmarsh function $m_0$ is given explicitly by 
    \begin{align}
      m_0(z) = \frac{\psi_0'\NLz}{z\psi_0(z,0)} = \begin{cases} \I, & z\in\C_+, \\ -\I, & z\in\C_-. \end{cases}
    \end{align}
    This guarantees that the spectrum of $S_0$ is purely absolutely continuous and coincides with $\R$. 
  \end{example}
  
   In particular, the essential spectrum of $S_0$ coincides with the real line $\R$ and the absolutely continuous spectrum of $S_0$ is essentially supported on $\R$.
 The latter means that every subset of $\R$ with positive Lebesgue measure has positive measure with respect to the corresponding spectral measure $\mu_0$.
  It turns out that these two properties continue to hold under a rather wide class of perturbations. 
  
    \begin{theorem}\label{thm0}
    Let $S$ be a generalized indefinite string $(L,\omega,\dip)$ such that $L$ is infinite and
     \begin{align}\label{eqnCondS0}
\int_0^\infty \left|\Wr(x) - c\right|^2 dx +  \int_0^\infty \left|\rho(x) - \eta\right|^2 dx + \int_{[0,\infty)} d\dip_\sing <\infty
    \end{align}
    for a real constant $c$ and a positive constant $\eta$, where $\Wr$ is the normalized anti-derivative of $\omega$, $\rho$ is the square root of the Radon--Nikod\'ym derivative of $\dip$ and $\dip_\sing$ is the singular part of $\dip$  (both with respect to the Lebesgue measure). 
    Then the essential spectrum of $S$ coincides with the real line $\R$ and the absolutely continuous spectrum of $S$ is essentially supported on $\R$. 
    \end{theorem}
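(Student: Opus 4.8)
The strategy is that of Deift and Killip \cite{deki99}, in the form developed for generalized indefinite strings in \cite{ACSpec}. Let $\mu$ denote the spectral measure of $S$ and recall that $\supp\mu$ equals the spectrum of $S$. It suffices to prove that $\mu_{\ac}'(\lambda)>0$ for Lebesgue-almost every $\lambda\in\R$: this is precisely the statement that the absolutely continuous spectrum of $S$ is essentially supported on $\R$, and it entails $\supp\mu=\R$, so that $\sigma_{\ess}(S)=\sigma(S)=\R$ because $\R$ has no isolated points. Moreover, to establish this positivity it is enough to exhibit a weight $w$ which is strictly positive almost everywhere together with a bound $\int_\R\log\bigl(\mu_{\ac}'(\lambda)\bigr)\,w(\lambda)\,d\lambda>-\infty$, since $\mu_{\ac}'$ is finite almost everywhere.

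First I would pass to truncated strings. For $\ell>0$, let $S_\ell$ be the generalized indefinite string on $[0,\infty)$ whose coefficients agree with those of $S$ on $[0,\ell)$, while its normalized anti-derivative equals the constant $c$ on $[\ell,\infty)$ and its measure equals $\eta^2$ times Lebesgue measure on $[\ell,\infty)$. The left-hand side of~\eqref{eqnCondS0} for $S_\ell$ is then $\int_0^\ell|\Wr-c|^2\,dx+\int_0^\ell|\rho-\eta|^2\,dx+\dip_{\sing}([0,\ell))$, which is bounded by the left-hand side of~\eqref{eqnCondS0} for $S$ uniformly in $\ell$. Since $S_\ell$ and $S$ agree on the growing intervals $[0,\ell)$, the strings $S_\ell$ converge to $S$ in the topology of generalized indefinite strings, so by the continuity of the correspondence with Weyl--Titchmarsh functions \cite[Proposition~6.2]{IndefiniteString} the functions $m_\ell$ converge to $m$ locally uniformly on $\C\backslash\R$; consequently the (suitably normalized) spectral measures $\mu_\ell$ converge to $\mu$ in the weak sense associated with the representation~\eqref{eqnWTmIntRep}.

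Each $S_\ell$ can be analysed explicitly: beyond $\ell$ its Weyl solution is a multiple of that of the reference string, whose Weyl--Titchmarsh function is the constant $\beta=\I\eta+c$ (note $\im\beta=\eta>0$). Gluing at $\ell$ via the fundamental system and using the Wronskian identity $\theta(z,x)\phi^\qd(z,x)-\theta^\qd(z,x)\phi(z,x)\equiv1$ shows that
\[
 m_\ell(z) = \frac{z\beta\,\theta(z,\ell) - \theta^\qd(z,\ell)}{z\,A_\ell(z)}, \qquad A_\ell(z) := \phi^\qd(z,\ell) - z\beta\,\phi(z,\ell),
\]
and that $A_\ell$ is entire, has no zeros in $\C_+$ (as $\T_\ell$ is self-adjoint) and none on $\R$ either (since $\phi(\redot,\ell)$ and $\phi^\qd(\redot,\ell)$ have no common zero), with $\pi\,\mu_{\ell,\ac}'(\lambda)=\eta/|A_\ell(\lambda)|^2$. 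In particular $\log A_\ell$ is holomorphic in $\C_+$ and $S_\ell$ has purely absolutely continuous spectrum on all of $\R$. The key step is to derive a trace formula: relating the behaviour of $\log A_\ell$ at high energy to the boundary values of $\log|A_\ell|$ on $\R$ via a Poisson-type representation, and computing the high-energy asymptotics of the fundamental system $\theta(\redot,\ell),\phi(\redot,\ell)$ and their quasi-derivatives with the help of Proposition~\ref{propFS} and the symmetries of generalized indefinite strings, one should obtain an identity of the form
\begin{align*}
 -\int_\R\log\bigl(\pi\,\mu_{\ell,\ac}'(\lambda)\bigr)\,w(\lambda)\,d\lambda
  &= C_0 + C_1\biggl(\int_0^\ell|\Wr_\ell(x)-c|^2\,dx \\
  &\qquad\qquad + \int_0^\ell|\rho_\ell(x)-\eta|^2\,dx + \int_{[0,\ell)}d\dip_{\ell,\sing}\biggr)
\end{align*}
with a fixed strictly positive weight $w$ and constants $C_0,C_1$ depending only on $\eta$ and $c$ (there are no discrete eigenvalue contributions because $\sigma_{\ess}(S_\ell)=\R$). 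Hence the right-hand side is bounded uniformly in $\ell$ by the previous paragraph, so $\int_\R\log\bigl(\mu_{\ell,\ac}'(\lambda)\bigr)\,w(\lambda)\,d\lambda\ge -C$ with $C$ independent of $\ell$.

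Finally I would let $\ell\to\infty$. The entropy functional $\nu\mapsto\int_\R\log\bigl(\nu_{\ac}'(\lambda)\bigr)\,w(\lambda)\,d\lambda$ is upper semicontinuous with respect to this mode of convergence \cite{deki99,ACSpec}, so the uniform lower bound passes to the limit, giving $\int_\R\log\bigl(\mu_{\ac}'(\lambda)\bigr)\,w(\lambda)\,d\lambda\ge -C>-\infty$. As $w>0$ almost everywhere and $\mu_{\ac}'<\infty$ almost everywhere, this forces $\mu_{\ac}'(\lambda)>0$ for almost every $\lambda\in\R$, which is what was needed. I expect the trace formula to be the main obstacle: because $\omega$ is only a distribution in $H^{-1}_{\loc}[0,\infty)$ and $\dip$ may have a nontrivial singular part, the required high-energy asymptotics of $A_\ell$ are not available in the literature and have to be established from scratch, which is precisely where the low regularity of the coefficients makes the present case more demanding than those treated in \cite{ACSpec}.
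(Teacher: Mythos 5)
Your overall skeleton is the same as the paper's: truncate the coefficients so that the string becomes the unperturbed model beyond a finite point, analyse the truncated strings through an entire "transmission-type" function built from the fundamental system $\theta,\phi,\theta^\qd,\phi^\qd$, derive a trace formula, invoke \cite[Proposition~6.2]{IndefiniteString} for locally uniform convergence of the Weyl--Titchmarsh functions, and pass the entropy bound to the limit (the paper does this last step by hand, via $\mu(\Omega)\geq\limsup_n\mu_n(\Omega)$ on compact $\Omega\subset\R\backslash\{0\}$ followed by Jensen's inequality, rather than by quoting semicontinuity of the entropy functional; these are equivalent in substance). Your identification of $A_\ell$ is also consistent: with $c=0$, $\eta=1$ one has $A_\ell(z)=\E^{-\I z\ell}(a(z)+b(z))$ in the paper's notation, and $\pi\varrho_\ell(\lambda)=|a(\lambda)+b(\lambda)|^{-2}$.

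The genuine gap is the trace formula, which you conjecture rather than prove, and which in the form you posit is not what actually holds. You claim an \emph{identity} $-\int_\R\log\bigl(\pi\mu_{\ell,\ac}'\bigr)w\,d\lambda=C_0+C_1(\cdots)$, i.e.\ a sum rule for $\log|A_\ell|=\log|a+b|$. The quantity for which an identity can be proved is $\log|a|$, where $a$ is the reciprocal transmission coefficient~\eqref{eq:a_0}: this function is entire, of Cartwright class (via the connection to canonical systems and \cite[Theorem~4.19]{rem}), zero-free on $\C_+\cup\R$ because of $|a|^2=|b|^2+1$ and the Herglotz property of $m$, and its Nevanlinna factorization combined with the Krein--de Branges computation of $\limsup_y y^{-1}\log|a(\I y)|$ and the Taylor coefficients $\dot a(0)$, $\ddot a(0)$ from Proposition~\ref{propFS} yields exactly the right-hand side of~\eqref{eqnCondS0} (Lemma~\ref{lemTF0}). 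The spectral density is then controlled only by the one-sided bound $4\pi\varrho(\lambda)\geq|a(\lambda)|^{-2}$, which follows from $|1+(a-b)/(a+b)|\geq 1$ (Corollary~\ref{corACest}); there is no clean identity for the entropy integral itself without bringing in the phase of the reflection coefficient. Your argument survives this correction because you only ever use the bound in one direction, but as written the central analytic step is missing: you do not establish that $a$ extends to a zero-free entire function of Cartwright class, you do not compute its exponential type or its expansion at $z=0$ (the fact that $\dot a(0)$ is purely imaginary is precisely what makes $\log|a(\lambda)|=\OO(\lambda^2)$ and hence makes the $\lambda^{-2}$-weighted integral converge at the origin), and you do not specify the weight $w$, which must be $\lambda^{-2}$ for the right-hand side to come out as the quantity in~\eqref{eqnCondS0}. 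Since you yourself flag this as "the main obstacle", be aware that it is not a routine verification but the technical core of the proof, occupying Lemmas~\ref{lem:asymptot}--\ref{lemTF0} of the paper.
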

    
A proof for this result will be given in Section~\ref{secPr1}. 
In view of the applications we have in mind (see Section~\ref{secAPP}), we are furthermore interested in perturbations of another explicitly solvable case involving a positive parameter $\alpha$.
  
  \begin{example}\label{exaalpha}
    Let $S_\alpha$ be the generalized indefinite string $(L_\alpha,\omega_\alpha,\dip_\alpha)$ such that $L_\alpha$ is infinite, the distribution $\omega_\alpha$ is identically zero and the measure $\dip_\alpha$ is given by 
    \begin{align}
       \int_{B}d\dip_\alpha = \int_B \frac{1}{(1+2 \alpha x)^2} dx
    \end{align} 
    for every Borel set $B\subseteq[0,\infty)$. 
    We note that under these assumptions, the corresponding differential equation~\eqref{eqnDEho} simply reduces to 
    \begin{align}\label{eq:SP2}
    - f''(x) = \frac{z^2}{(1+2 \alpha x)^{2}}  f(x),\quad x\in[0,\infty).
    \end{align}
  For every $z\in\C_+\cup(-\alpha,\alpha)\cup\C_-$, the function $\psi_\alpha(z,\redot)$ given by\footnote{In the following, we will always take the branch of the square root $\sqrt{\cdot}$ with cut along the positive semi-axis $[0,\infty)$ defined by $\sqrt{z}=\sqrt{|z|}\E^{\I \arg(z)/2}$ with $\arg(z)\in [0,2\pi)$.}  
    \begin{align}\label{eq:psiC}
      \psi_\alpha(z,x) = (1+2\alpha x)^{\frac{\I \sqrt{z^2-\alpha^2}}{2{\alpha}} + \frac{1}{2}}, \quad x\in[0,\infty),
    \end{align}   
    is a solution of this differential equation which lies in $\dot{H}^1[0,\infty)$ and $L^2([0,\infty);\dip_\alpha)$.
    Let us point out here that the mapping 
    \begin{align}
      z\mapsto\sqrt{z^2-\alpha^2}
    \end{align}
    is analytic and has positive imaginary part on the domain $\C_+\cup(-\alpha,\alpha)\cup\C_-$.
    Moreover, near the real line one has
 \begin{align}\label{eq:konR}
   \lim_{\varepsilon\rightarrow0} \sqrt{(\lambda\pm\varepsilon\I)^2 - \alpha^2} = \begin{cases}
  \mp\sqrt{\lambda^2-\alpha^2},  & \lambda \in (-\infty,-\alpha], \\
  \ \I \sqrt{\alpha^2-\lambda^2}, & \lambda \in (-\alpha, \alpha), \\
  \pm\sqrt{\lambda^2-\alpha^2}, & \lambda \in[\alpha,\infty). \end{cases}
  \end{align}
    Now the corresponding Weyl--Titchmarsh function $m_\alpha$ is given explicitly by  
    \begin{align}\label{eq:WTfB}
      m_\alpha(z) = \frac{\psi_\alpha'\NLz}{z\psi_\alpha(z,0)} = \frac{\I \sqrt{z^2-\alpha^2}+{\alpha}}{z}, \quad z\in\C\backslash\R.  
    \end{align}
    This guarantees that the spectrum of $S_\alpha$ is purely absolutely continuous and coincides with the set  $ (-\infty,-\alpha] \cup [\alpha,\infty)$. 
  \end{example}
  
    In particular, the essential spectrum of $S_\alpha$ coincides with  the set  $ (-\infty,-\alpha] \cup [\alpha,\infty)$ and the absolutely continuous spectrum of $S_\alpha$ is essentially supported on $ (-\infty,-\alpha] \cup [\alpha,\infty)$.
  These two properties are again preserved under a rather wide class of perturbations. 
  
  \begin{theorem}\label{thmalpha}
    Let $S$ be a generalized indefinite string $(L,\omega,\dip)$ such that $L$ is infinite and 
  \begin{align}\label{eqnCondSalpha}
      \int_0^\infty   \left| \Wr(x) - c \right|^2 x\, dx  +  \int_0^\infty   \Bigl| \rho(x) - \frac{\eta}{1+2{\alpha}x} \Bigr|^2 x\, dx + \int_{[0,\infty)}  x\, d\dip_\sing(x) & < \infty
    \end{align}
    for a real constant $c$ and positive constants $\alpha$ and $\eta$, where $\Wr$ is the normalized anti-derivative of $\omega$, $\rho$ is the square root of the Radon--Nikod\'ym derivative of $\dip$ and $\dip_\sing$ is the singular part of $\dip$  (both with respect to the Lebesgue measure). 
    Then the essential spectrum of $S$ coincides with the set $(-\infty,-{\alpha}/{\eta}] \cup [{\alpha}/{\eta},\infty)$ and the absolutely continuous spectrum of $S$ is essentially supported on $(-\infty,-{\alpha}/{\eta}]\cup [{\alpha}/{\eta},\infty)$. 
  \end{theorem}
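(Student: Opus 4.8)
The plan is to follow the Deift--Killip strategy of \cite{ACSpec}, now with the explicitly solvable string $S_\alpha$ of Example~\ref{exaalpha} in the role played by the free string $S_0$ in Theorem~\ref{thm0}. A preliminary scaling $\omega\mapsto\eta^{-1}\omega$, $\dip\mapsto\eta^{-2}\dip$ sends a generalized indefinite string satisfying~\eqref{eqnCondSalpha} with parameters $(c,\alpha,\eta)$ to one satisfying it with parameters $(\eta^{-1}c,\alpha,1)$, while multiplying the spectrum and each of its spectral parts by $\eta$; this accounts for the threshold $\alpha/\eta$ and reduces the theorem to the case $\eta=1$. In that case the appropriate reference object is the string $\hat S_\alpha$ obtained from $S_\alpha$ by adding the point mass $c\,\delta_0$ to $\omega$: its normalized antiderivative equals $c$ and the square root of the Radon--Nikod\'ym derivative of its measure equals $(1+2\alpha x)^{-1}$, so that~\eqref{eqnCondSalpha} with $\eta=1$ expresses precisely that $\Wr-c$ and $\rho-(1+2\alpha x)^{-1}$ lie in $L^2([0,\infty);x\,dx)$ together with $\int_{[0,\infty)}x\,d\dip_\sing<\infty$. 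A direct computation shows that this modification only replaces the Weyl--Titchmarsh function by $m_\alpha+c$; in particular the spectral measure of $\hat S_\alpha$ coincides with that of $S_\alpha$, so $\hat S_\alpha$ has purely absolutely continuous spectrum $(-\infty,-\alpha]\cup[\alpha,\infty)$.

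The statement about the essential spectrum (in the reduced case) I would obtain by showing $S$ to be a relatively compact perturbation of $\hat S_\alpha$: realizing both strings as the additive perturbations alluded to in Section~\ref{secMR} and working in the Hilbert space $\cH$, the difference of the resolvents is Hilbert--Schmidt precisely on account of the three weighted conditions above --- the weight $x\,dx$ compensating the $(1+2\alpha x)^{-2}$ decay of $\dip_\alpha$. Weyl's theorem then yields $\sigma_{\ess}(S)=\sigma_{\ess}(\hat S_\alpha)=(-\infty,-\alpha]\cup[\alpha,\infty)$.

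The heart of the argument is a dispersion relation controlling the spectral measure $\mu$ of $S$ by the quantity $\mathcal{K}(S)$ equal to a fixed constant times the left--hand side of~\eqref{eqnCondSalpha} with $\eta=1$. For \emph{short-range} strings --- whose coefficients agree with those of $\hat S_\alpha$ off a compact set and are sufficiently regular there --- I would first establish an identity of the form
\begin{equation*}
 \sum_{\lambda\in\sigma_{\dis}(S)} g(\lambda)\; +\; \frac{1}{\pi}\int_{|\lambda|\geq\alpha}\frac{\sqrt{\lambda^2-\alpha^2}}{|\lambda|}\,\log\!\Bigl(\frac{\sqrt{\lambda^2-\alpha^2}}{\pi\,|\lambda|\,\mu'(\lambda)}\Bigr)\,d\lambda\; =\; \mathcal{K}(S),
\end{equation*}
with $g\geq0$, by a contour-integration and residue computation applied to a logarithm of a perturbation determinant built from the Weyl--Titchmarsh functions of $S$ and $S_\alpha$ (the latter being~\eqref{eq:WTfB}): the contour is deformed onto $\R$ and to infinity, the absolutely continuous spectrum producing the integral over $\{|\lambda|\geq\alpha\}$ and the discrete spectrum the sum, while the contribution at $z=0$ is computed from the second-order expansion of $m$ there. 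This last step is where Proposition~\ref{propFS} enters: the relevant Taylor coefficients of $m$ at the origin are governed by the large-$x$ asymptotics of the functions in~\eqref{eqndphi0} and~\eqref{eqnddphip0}, and after subtracting the corresponding (individually divergent) expressions for $\hat S_\alpha$ they reduce to $\int_0^\infty|\Wr(x)-c|^2\,x\,dx$, $\int_0^\infty|\rho(x)-(1+2\alpha x)^{-1}|^2\,x\,dx$ and $\int_{[0,\infty)}x\,d\dip_\sing(x)$. For a general string $S$ satisfying~\eqref{eqnCondSalpha} with $\eta=1$ I would then choose short-range approximants $S_n$ with $\mathcal{K}(S_n)\to\mathcal{K}(S)$; by the continuity of the correspondence between generalized indefinite strings and their Weyl--Titchmarsh functions (\cite[Proposition~6.2]{IndefiniteString}) the functions $m_n$ converge to $m$ locally uniformly off the real line, hence the measures $\mu_n$ converge weakly to $\mu$, and discarding the nonnegative discrete sum together with the lower semicontinuity of the relevant entropy integral upgrades the identity to the inequality
\begin{equation*}
 \frac{1}{\pi}\int_{|\lambda|\geq\alpha}\frac{\sqrt{\lambda^2-\alpha^2}}{|\lambda|}\,\Bigl(\log\frac{\sqrt{\lambda^2-\alpha^2}}{\pi\,|\lambda|\,\mu'(\lambda)}\Bigr)^{+}\,d\lambda\;\leq\;\mathcal{K}(S)\;<\;\infty .
\end{equation*}
Since the weight is strictly positive almost everywhere on $\{|\lambda|\geq\alpha\}$ and $\bigl(\log(1/t)\bigr)^{+}=+\infty$ precisely when $t=0$, we conclude $\mu'(\lambda)>0$ for almost every $\lambda$ with $|\lambda|\geq\alpha$; that is, the absolutely continuous part of $\mu$ is essentially supported on $(-\infty,-\alpha]\cup[\alpha,\infty)$. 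Combined with the statement about the essential spectrum this settles the case $\eta=1$, and the general case follows from the reduction above.

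I expect the derivation of the short-range trace formula to be the main obstacle. The low regularity of the coefficients --- $\omega$ merely a distribution, $\dip$ possibly carrying a nontrivial singular part --- rules out the classical smooth-coefficient derivations, so the second-order behaviour of $m$ at $z=0$ has to be extracted directly from Proposition~\ref{propFS}, with careful control of the convergence of the iterated integrals at infinity and of the exact cancellation of the divergent model terms against the renormalizing constants $c$ and $\eta$, all while simultaneously justifying the contour deformation (the analytic continuation of $m$ across $(-\alpha,\alpha)$, the requisite decay estimates, and the choice of a single-valued branch of the logarithm) for the short-range class.
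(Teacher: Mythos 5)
Your overall strategy coincides with the paper's: a Deift--Killip argument resting on a trace formula for a restricted ``short-range'' class, upgraded to general coefficients via the continuity of the correspondence $S\mapsto m$ from \cite[Proposition~6.2]{IndefiniteString}, weak convergence of spectral measures and Jensen's inequality. Two steps of your sketch, however, contain genuine gaps. The first concerns the essential spectrum. You propose to realize $S$ as an additive perturbation of the reference string in $\cH$ with Hilbert--Schmidt (hence compact) resolvent difference and to invoke Weyl's theorem. Section~\ref{secMR} explicitly states that this operator-theoretic interpretation will \emph{not} be developed, and making it rigorous is far from routine: $\T$ is a linear relation (with a possibly nontrivial multivalued part), $\omega$ is merely an $H^{-1}_{\loc}$ distribution, and no proof that condition~\eqref{eqnCondSalpha} forces compactness of the resolvent difference is available. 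The paper instead extracts the inclusion $\sigma_\ess(S)\subseteq(-\infty,-\alpha/\eta]\cup[\alpha/\eta,\infty)$ from the very discrete-spectrum term you propose to discard: the sum over the zeros $\kappa_n$ of $a$ in the gap dominates $\tfrac{2}{3}\sum_i(1-|\lambda_i^\pm|^2)^{3/2}$ over the eigenvalues there (Corollary~\ref{corEVestAlpha}), and this bound, uniform along the approximating sequence, rules out accumulation of eigenvalues in compact subintervals of the gap. Dropping the nonnegative discrete term thus throws away precisely the information your argument still needs once the Weyl-theorem route is unsubstantiated.

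The second gap is in the dispersion relation itself. The weight $\sqrt{\lambda^2-\alpha^2}/|\lambda|$ you write is $O(1)$ at infinity, so finiteness of that entropy integral would require $\log|a(\lambda)|$ to be Lebesgue-integrable on $\R$, which~\eqref{eqnCondSalpha} does not provide; the formula actually controlled by the hypotheses is the second-order one of Lemma~\ref{lem:TrFlaAlpha}, with weight $\sqrt{\lambda^2-\alpha^2}/|\lambda|^{3}$. Moreover, the coefficient integrals arise from the Taylor coefficients $\dot a(0)$, $\ddot a(0)$ of the reciprocal transmission coefficient $a$ built from the Jost solution (via Proposition~\ref{propFS} applied to $\theta$, $\phi$ in the representation~\eqref{eq:a}), not from the second-order expansion of $m$ at the origin --- the low-order Taylor data of $m$ at $0$ consists of moments of $\mu$ and does not reproduce the weighted $L^2$-norms in~\eqref{eqnCondSalpha}. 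Finally, your contour/factorization step tacitly assumes the zeros of the relevant analytic function in the gap $(-\alpha/\eta,\alpha/\eta)$ are finite in number; this requires a separate interlacing argument (the paper compares the zeros of $a$ and of $f(\ledot,0)$ with the monotone function $(1-\I k(z))/z$) that your sketch omits. None of these defects is fatal to the strategy, but each must be repaired before the proof closes.
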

  
   Although the proof of this result is quite similar to the one for Theorem~\ref{thm0} in principle, it will be carried out separately in Section~\ref{secPr2} due to differences in details.   
   
\begin{remark}
 In the special case when the distribution $\omega$ is identically zero, the spectral problem~\eqref{eqnDEho} reduces to a Krein string with a squared spectral parameter. 
 By exploiting this observation, Theorem~\ref{thm0} and Theorem~\ref{thmalpha} also give new results about the absolutely continuous spectrum for a class of Krein strings that have not been covered previously in~\cite{ACSpec}. 
\end{remark}

 \section{Proof of Theorem~\ref{thm0}}\label{secPr1}

  To begin with, let us consider a particular class of generalized indefinite strings. 
  We assume that $(L,\omega,\dip)$ is a generalized indefinite string such that $L$ is infinite and there is an $R>0$ such that the normalized anti-derivative $\Wr$ of $\omega$ is equal to zero almost everywhere on $[R,\infty)$ and the measure $\dip$ coincides with the Lebesgue measure on $[R,\infty)$.
  Under these assumptions, for every $z$ in the upper complex half-plane $\C_+$, there is a {\em Jost solution} $f(z,\redot)$ of the differential equation~\eqref{eqnDEho} such that 
  \begin{align}
    f(z,x) =  \E^{\I z x}, \quad x\in[R,\infty). 
  \end{align}   
  We note that since the function $f(z,\redot)$ clearly lies in $\dot{H}^1[0,\infty)$ and $L^2([0,\infty);\dip)$, the corresponding Weyl--Titchmarsh function $m$ is given by 
  \begin{align}\label{eqnmJost}
    m(z) = \frac{f'(z,0-)}{z f(z,0)}, \quad z\in\C_+.
  \end{align}
  
Next, we define the function $a$ on $\C_+$ via 
   \begin{align}\label{eq:a_0}
     a(z) = \frac{\I z f(z,0)+f'(z,0-)}{2\I z}, \quad z\in\C_+,
   \end{align}
   which can be viewed as the reciprocal transmission coefficient when the differential equation is suitably extended to the full line.
 It has a unique continuation (denoted with $a$ as well for simplicity) to an entire function. 
In fact, if $\theta$, $\phi$ denotes the fundamental system of solutions of the differential equation~\eqref{eqnDEho} as in Section~\ref{sec:prelim}, then we may write
\begin{align}
  f(z,x) = f(z,0)\theta(z,x) + f'(z,0-)\phi(z,x), \quad x\in[0,\infty),~z\in\C_+.
\end{align}
Upon evaluating this function and its derivative at a point $r> R$, we get 
\begin{align}
 \E^{\I zr} & = f(z,0)\theta(z,r) + f'(z,0-)\phi(z,r), \\
\I z\E^{\I zr} & = f(z,0)\theta^\qd(z,r) + f'(z,0-)\phi^\qd(z,r),
\end{align}  
which we can solve for $f(z,0)$ and $f'(z,0-)$ to obtain
\begin{align}
f(z,0) & = \E^{\I z r}(\phi^\qd(z,r) - \I z\phi(z,r)), \\ f'(z,0-) & =  \E^{\I zr}(\I z\theta(z,r) - \theta^\qd(z,r)).
\end{align}
Plugging this into the definition of $a$ shows that
\begin{align}\label{eqnaasthephi}
    2 \E^{-\I z r} a(z)  = \theta(z,r) - \frac{1}{\I z} \theta^\qd(z,r)   - \I z \phi(z,r) + \phi^\qd(z,r), \quad z\in\C_+. 
\end{align}
    Upon taking into account the second equality in~\eqref{eqncsatzero}, this identity guarantees that the function $a$ has a unique continuation to an entire function. 
    
     In a similar way, we introduce the function $b$ on $\C_+$ via 
  \begin{align}\label{eq:b_0}
    b(z) = \frac{\I z f(z,0) - f'(z,0-)}{2\I z}, \quad z\in\C_+.
  \end{align} 
  As before, we see that $b$ can be continued to an entire function because of the identity 
   \begin{align}\label{eqnbasthephi}
     2\E^{-\I z r} b(z)  = - \theta(z,r) + \frac{1}{\I z} \theta^\qd(z,r)
  - \I z \phi(z,r) + \phi^\qd(z,r), \quad z\in\C_+,
   \end{align}
   which holds as long as $r>R$. 
  Now it is a straightforward computation to verify that for real $\lambda$, we have  
  \begin{align}\label{eqnabone}
    |a(\lambda)|^2  = |b(\lambda)|^2 + 1,
  \end{align}
  which guarantees that $a$ has no zeros on the real axis. 
  Furthermore, there are no zeros in the upper complex half-plane.   
  In fact, if $z$ was a zero in $\C_+$, then 
  \begin{align}
     m(z) = \frac{f'(z,0-)}{z f(z,0)} = \frac{-\I z f(z,0)}{z f(z,0)} = -\I,
  \end{align} 
  which is a contradiction to the fact that $m$ is a Herglotz--Nevanlinna function.   
 
\begin{lemma}\label{lem:asymptot}
 For every $r>R$, there is an $\varepsilon>0$ such that 
\begin{align}\label{eq:asymptot}
\frac{\theta^\qd(\I y,r)}{\I y\theta(\I y,r)} & = -\I + \OO(\E^{-\eps y}), & 
\frac{\phi^\qd(\I y,r)}{\I y\phi(\I y,r)} & = -\I+ \OO(\E^{-\eps y}),
\end{align}
as $y\rightarrow \infty$.
\end{lemma}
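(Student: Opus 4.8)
The plan is to express the fundamental system at $z=\I y$ in terms of the Jost solution $f(\I y,\redot)$ by reduction of order, and then to read off the asymptotics from the explicit form of $f(\I y,\redot)$ on $[R,\infty)$. The crucial preliminary step is to show that $f(\I y,\redot)$ is zero-free on $[0,R]$ and that $|f(\I y,x)|\geq|f(\I y,R)|=\E^{-yR}$ for $x\in[0,R]$. To see this, note that wherever $f(\I y,\redot)$ does not vanish one has $\frac{d}{dx}\log|f(\I y,x)|=\re\frac{f'(\I y,x)}{f(\I y,x)}$, and that $\frac{f'(\I y,x)}{\I y\,f(\I y,x)}$ differs only by a real number from the Weyl--Titchmarsh function $m_x$ of the generalized indefinite string restricted to $[x,\infty)$ (whose recessive solution is again $f(\I y,\redot)$), so it has non-negative imaginary part; hence $\frac{d}{dx}\log|f(\I y,x)|=-y\,\im m_x(\I y)\leq0$, and $|f(\I y,\redot)|$ is non-increasing on every subinterval of $[0,R]$ on which it is non-zero. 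Since $f(\I y,R)=\E^{-yR}\neq0$, a continuity argument extending this to the left gives the claim; in particular $f(\I y,0)\neq0$. I would also record that $f'(\I y,0-)\neq0$ for all $y>0$: otherwise $\theta(\I y,\redot)$ would be a constant multiple of $f(\I y,\redot)$ and the Weyl--Titchmarsh function $m$ of the string would have a zero in $\C_+$, which is impossible because a non-zero Herglotz function has no zeros there and $m\not\equiv0$ (for $m\equiv0$ would force $\theta^\qd(z,r)=\I z\,\theta(z,r)$, contradicting $\theta(0,r)=1$ upon comparing real and imaginary parts for real $z$).

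Since $f(\I y,\redot)$ is then zero-free on all of $[0,\infty)$, reduction of order (using that the modified Wronskian $f\,v^\qd-f^\qd v$ is constant) yields, with $I(x):=\int_0^x f(\I y,s)^{-2}\,ds$,
\begin{align*}
 \phi(\I y,x)=f(\I y,0)\,f(\I y,x)\,I(x), \qquad \theta(\I y,x)=f(\I y,x)\bigl(f(\I y,0)^{-1}-f'(\I y,0-)\,I(x)\bigr).
\end{align*}
Evaluating at a point $r>R$, where $f(\I y,x)=\E^{-yx}$, $f^\qd(\I y,x)=-y\,\E^{-yx}$, and $I(r)=I(R)+\tfrac1{2y}\bigl(\E^{2yr}-\E^{2yR}\bigr)$, a short computation gives
\begin{align*}
 \frac{\phi^\qd(\I y,r)}{\I y\,\phi(\I y,r)}=-\I\,\frac{1-E'}{1+E'}, \qquad \frac{\theta^\qd(\I y,r)}{\I y\,\theta(\I y,r)}=-\I\,\frac{1+E}{1-E},
\end{align*}
where $E'=2y\,I(R)\,\E^{-2yr}-\E^{2y(R-r)}$ and $E=\dfrac{2y\,\E^{-2yr}}{f'(\I y,0-)\,f(\I y,0)}-2y\,I(R)\,\E^{-2yr}+\E^{2y(R-r)}$.

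It then remains to estimate $E$ and $E'$. By the first step $|f(\I y,s)|^{-2}\leq\E^{2yR}$ on $[0,R]$, so $|I(R)|\leq R\,\E^{2yR}$, and the terms $2y\,I(R)\,\E^{-2yr}$ and $\E^{2y(R-r)}$ are $\OO\bigl(y\,\E^{-2y(r-R)}\bigr)$. For the remaining term, $f'(\I y,0-)=\I y\,m(\I y)\,f(\I y,0)$ gives $|f'(\I y,0-)\,f(\I y,0)|=y\,|m(\I y)|\,|f(\I y,0)|^2\geq y\,|m(\I y)|\,\E^{-2yR}$; since the string coincides with the free one near $+\infty$, its essential spectrum contains $\R$, so the spectral measure $\mu$ charges every bounded interval and hence $|m(\I y)|\geq\im m(\I y)\geq c/(1+y)$ for some $c>0$ and all $y>0$, whence that term is $\OO\bigl(y\,\E^{-2y(r-R)}\bigr)$ as well. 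Consequently $E,E'=\OO(\E^{-\eps y})$ for any $\eps\in(0,2(r-R))$; the factors $1\pm E$, $1\pm E'$ are then bounded away from zero for large $y$, and the two displayed identities yield precisely \eqref{eq:asymptot}. The genuine difficulty is the lower bound for $|f(\I y,\redot)|$ on $[0,R]$: a priori transfer-matrix estimates only furnish an exponential upper bound for the solutions with an uncontrolled rate, and it is essential to use that $f(\I y,\redot)$ is the recessive solution — which is exactly what the monotonicity of $|f(\I y,\redot)|$, via the Herglotz property of the restricted Weyl--Titchmarsh functions, captures.
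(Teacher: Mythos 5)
Your argument is correct, but it takes a genuinely different (and heavier) route than the paper. The paper's proof is purely local near $r$: choosing $\eps<r-R$, the string is free on $[r-\eps,r]$, so $\theta(z,\cdot)$ and $\phi(z,\cdot)$ are propagated from $r-\eps$ to $r$ by the explicit $\cos/\sin$ transfer matrix; writing the resulting ratio in terms of $\tan(\I y\eps)=\I+\OO(\E^{-2\eps y})$ and the Herglotz function $m_\eps(z)=-\theta^\qd(z,r-\eps)/(z\theta(z,r-\eps))$ — so that $m_\eps(\I y)=\OO(y)$ and $|1-\I m_\eps(\I y)|\geq 1$ — yields the claim in a few lines, using nothing about the string on $[0,r-\eps)$ beyond that Herglotz property. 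You instead work globally through the Jost solution: monotonicity of $|f(\I y,\cdot)|$ (via the Herglotz property of the shifted Weyl--Titchmarsh functions) gives the lower bound $|f(\I y,x)|\geq\E^{-yR}$ on $[0,R]$, reduction of order expresses $\theta$ and $\phi$ through $f$, and a lower bound $\im m(\I y)\geq c/(1+y)$ controls the term involving $f'(\I y,0-)f(\I y,0)$. All steps check out; in particular you correctly identified that a quantitative lower bound on $|f'(\I y,0-)f(\I y,0)|$ is genuinely needed (if $f'(\I y,0-)$ vanished, the ratio for $\theta$ would equal $+\I$). What your approach buys is explicit formulas: it essentially also delivers the asymptotics of $f(\I y,0)$ and hence feeds directly into Lemma~\ref{lemaasym}, whereas the paper has to redo a similar computation there. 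What it costs is two nontrivial global inputs. One small improvement: to get $\mu([-1,1])>0$ you invoke a decomposition principle for the essential spectrum, which is not established in the paper; it is cleaner to use \eqref{eqnabone} together with \eqref{eq:m=ab} (both available independently of Lemma~\ref{lem:asymptot}) to see that $\im m$ has positive continuous boundary values on $\R$, so that $\mu$ has a positive density and the bound $\im m(\I y)\geq c/(1+y)$ follows from the integral representation \eqref{eqnWTmIntRep}.
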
 

\begin{proof}
 Let $r>R$ and choose an $\varepsilon>0$ such that $\varepsilon<r-R$.
 Then the function $\Wr$ is equal to zero almost everywhere on $[r-\eps,\infty)$ and the measure $\dip$ coincides with the Lebesgue measure on $[r-\eps,\infty)$.
Therefore, we clearly have
\begin{align*}
z\theta(z,x) & = z\theta(z,r-\eps)\cos(z(x-r+\eps)) + \theta^\qd(z,r-\eps)\sin(z(x-r+\eps)),\\
\theta^\qd(z,x) & = -z\theta(z,r-\eps)\sin(z(x-r+\eps)) + \theta^\qd(z,r-\eps)\cos(z(x-r+\eps)),
\end{align*}
for all $x> r-\eps$ and $z\in\C$. 
This allows us to compute 
\begin{align*}
\frac{\theta^\qd(z,r)}{z\theta(z,r)} = \frac{-z\theta(z,r-\eps)\sin(z\eps) + \theta^\qd(z,r-\eps)\cos(z\eps)}{z\theta(z,r-\eps)\cos(z\eps) + \theta^\qd(z,r-\eps)\sin(z\eps)},
\end{align*}
which holds at least as long as $z\in\C_+$. 
Since one has  
\begin{align*}
  \frac{\sin(\I y\eps)}{\cos(\I y\eps)} = \I - 2\I\frac{\E^{-2\eps y}}{1+\E^{-2\eps y}} = \I + \OO(\E^{-2\eps y})
\end{align*}
as $y\rightarrow \infty$ and also taking into account that the function
\begin{align*}
m_\eps(z) = -\frac{\theta^\qd(z,r-\eps)}{z\theta(z,r-\eps)},\quad z\in\C\backslash\R,
\end{align*}
is a Herglotz--Nevanlinna function so that $m_\eps(\I y) = \OO(y)$ as $y\rightarrow\infty$, we get
\begin{align*}
\frac{\theta^\qd(\I y,r)}{\I y\theta(\I y,r)} &= -\I \frac{1  -\I m_\eps(\I y) + \OO(\E^{-2\eps y})}{1 - \I m_\eps(\I y) + \OO(y\E^{-2\eps y})} = -\I + \OO(\E^{-\eps y})
\end{align*}
as $y\rightarrow\infty$.
The proof of the corresponding claim for $\phi$ is just the same.
\end{proof}

We continue to denote by $\rho$ the square root of the Radon--Nikod\'ym derivative of $\dip$ and by $\dip_\sing$ the singular part of $\dip$ (both with respect to the Lebesgue measure). 

 \begin{lemma}\label{lemaasym}
The function $a$ satisfies  
\begin{align}
  \limsup_{y\rightarrow\infty} \frac{1}{y} \log|a(\I y)| = \int_0^\infty \rho(x) - 1\, dx.
\end{align} 
\end{lemma}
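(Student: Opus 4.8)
The plan is to reduce the claim to the behaviour of the entire function $z\mapsto\phi(z,r)$ along the imaginary axis for some fixed $r>R$, which is governed by its exponential type and the location of its zeros.

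\medskip

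First I would observe that inserting $f'(z,0-)=z\,f(z,0)\,m(z)$ (see~\eqref{eqnmJost}) into the definition~\eqref{eq:a_0} gives
\begin{align}\label{eqnaasfm}
  a(z) = \frac{1}{2}\,f(z,0)\,\bigl(1-\I\, m(z)\bigr), \qquad z\in\C_+.
\end{align}
Since $m$ is a Herglotz--Nevanlinna function, $m(\I y)$ has non-negative imaginary part, so $|1-\I\, m(\I y)|\geq1$; moreover the integral representation~\eqref{eqnWTmIntRep} shows $m(\I y)=\OO(y)$, so $1\leq|1-\I\, m(\I y)|=\OO(y)$ and $\frac{1}{y}\log|1-\I\, m(\I y)|\to0$. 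Hence~\eqref{eqnaasfm} yields
\begin{align}\label{eqnaJost00}
 \limsup_{y\to\infty}\frac{1}{y}\log|a(\I y)| = \limsup_{y\to\infty}\frac{1}{y}\log|f(\I y,0)|.
\end{align}
We have already seen in Section~\ref{secPr1} that $f(z,0)=\E^{\I z r}\bigl(\phi^\qd(z,r)-\I z\,\phi(z,r)\bigr)$ for $r>R$; evaluating at $z=\I y$ and using Lemma~\ref{lem:asymptot}, which gives $\phi^\qd(\I y,r)=y\,\phi(\I y,r)\bigl(1+\OO(\E^{-\eps y})\bigr)$, we obtain $f(\I y,0)=y\,\E^{-yr}\,\phi(\I y,r)\bigl(2+\OO(\E^{-\eps y})\bigr)$ and therefore
\begin{align}\label{eqnfphir}
 \frac{1}{y}\log|f(\I y,0)| = -r + \frac{1}{y}\log|\phi(\I y,r)| + \oo(1), \qquad y\to\infty
\end{align}
(here $\phi(\I y,r)\neq0$, since $z\mapsto\phi(z,r)$ has only real zeros, see below).

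\medskip

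It thus remains to prove that $\lim_{y\to\infty}\frac{1}{y}\log|\phi(\I y,r)| = \int_0^r\rho(x)\,dx$. The function $z\mapsto\phi(z,r)$ is real entire (Section~\ref{sec:prelim}), and its zeros are precisely the eigenvalues of the generalized indefinite string obtained by restricting $(L,\omega,\dip)$ to $[0,r)$: the associated Weyl solution is a multiple of $\theta(z,r)\phi(z,\redot)-\phi(z,r)\theta(z,\redot)$, whose value at the origin is $-\phi(z,r)$. Since this truncated string is self-adjoint, all these zeros are real. On the other hand, $z\mapsto\phi(z,r)$ has exponential type $\int_0^r\rho$: the inequality ``$\leq$'' comes from the standard growth estimates for solutions of~\eqref{eqnDEho} (neither $\omega$ nor the singular part of $\dip$ adds to the type), and the precise value then follows from the Weyl asymptotics $\lambda_n\sim(n\pi/\int_0^r\rho)^2$ for the eigenvalues of the truncated string; see~\cite{IndefiniteString}. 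A real entire function of exponential type $\tau$ with only real zeros satisfies $\frac{1}{y}\log|\phi(\I y,r)|\to\tau$ as $y\to\infty$ (via its Hadamard factorisation together with the eigenvalue asymptotics), which is the desired identity. Combining this with~\eqref{eqnfphir} and~\eqref{eqnaJost00} and using $\rho\equiv1$ on $[R,\infty)$ gives
\begin{align}
 \limsup_{y\to\infty}\frac{1}{y}\log|a(\I y)| = -r + \int_0^r\rho(x)\,dx = \int_0^R\bigl(\rho(x)-1\bigr)\,dx = \int_0^\infty\bigl(\rho(x)-1\bigr)\,dx,
\end{align}
which is the assertion; note that the right-hand side is independent of the choice of $r>R$.

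\medskip

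The step I expect to be the main obstacle is the determination of the exponential type of $z\mapsto\phi(z,r)$ — the lower bound in particular — together with the verification that the low-regularity data (the merely square-integrable $\Wr$ and the singular part of $\dip$) are invisible to this limsup, i.e.\ that they do not contribute to the exponential type.
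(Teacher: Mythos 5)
Your reduction is sound and runs essentially parallel to the paper's own argument: writing $a=\tfrac12 f(\redot,0)\,(1-\I m)$ and discarding the Herglotz factor (which is trapped between $1$ and $\OO(y)$ on the imaginary axis) is the same manoeuvre the paper performs with $E(z)=2\E^{-\I zr}a(z)$ divided by $\theta(z,r)$, and your use of Lemma~\ref{lem:asymptot} to trade $f(\I y,0)$ for $y\,\E^{-yr}\phi(\I y,r)$ up to bounded factors is correct (note that a limsup suffices everywhere, so you do not actually need the limit for $\phi$). The gap is in the final step, which you correctly identify as the crux. The paper does not derive the growth rate of a fundamental solution; it \emph{cites} the Krein--de Branges theorem, via the translation to canonical systems, to obtain $\limsup_{y\to\infty}\frac1y\log|\theta(\I y,r)|=\int_0^r\rho$, and the same citation covers $z\phi(z,r)$ and hence $\phi(z,r)$. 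Your proposed substitute --- exponential type from Weyl eigenvalue asymptotics plus Hadamard factorisation --- is in effect an outline of a proof of the Krein--de Branges theorem itself, and as written it is not correct in detail: for the quadratic pencil $z\omega+z^2\dip$ the zeros of $\phi(\redot,r)$ accumulate like $z_n^\pm\sim\pm n\pi/\int_0^r\rho$ on \emph{both} half-axes (linearly in $n$), not like $(n\pi/\int_0^r\rho)^2$, and establishing these asymptotics with the correct constant for coefficients with merely locally square-integrable $\Wr$ and a nontrivial singular part of $\dip$ is precisely the hard content that the cited theorem supplies. If you replace that paragraph by an appeal to \cite[Theorem~4.26]{rem} or \cite[Theorem~11]{rom} (together with the Cartwright-class property from \cite[Theorem~4.19]{rem}, which you need anyway to pass from the exponential type to the growth along $\I\R_+$), the rest of your argument goes through and gives a proof equivalent to the paper's.
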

   
 \begin{proof}
 By utilizing the connection between generalized indefinite strings and canonical systems (see \cite[pp.~962--963]{IndefiniteString}), it follows from the Krein--de Branges theorem \cite[Theorem~4.26]{rem}, \cite[Theorem~11]{rom} that  
\begin{align*}
  \limsup_{y\rightarrow \infty}  \frac{1}{y}\log|\theta(\I y,r)|  = \int_0^r \rho(x) dx
\end{align*}
for any fixed $r>R$.
Upon denoting the function on the right-hand side of \eqref{eqnaasthephi} by $E$, we then have  
\begin{align*}
\frac{E(z)}{\theta(z,r)} & = 1 + \I \frac{\theta^\qd(z,r)}{z\theta(z,r)} - \I \frac{z\phi(z,r)}{\theta(z,r)} + \frac{\phi^\qd(z,r)}{\theta(z,r)} \\ 
&= 1 + \I \frac{\theta^\qd(z,r)}{z\theta(z,r)} + \frac{z\phi(z,r)}{\theta(z,r)}\biggl(\frac{\phi^\qd(z,r)}{z\phi(z,r)} - \I\biggr), \quad z\in\C\backslash\R.   
\end{align*}
One notices that the function
\begin{align*}
 m_r(z) = \frac{z\phi(z,r)}{\theta(z,r)}, \quad z\in\C\backslash\R, 
\end{align*}
is a Herglotz--Nevanlinna function so that $m_r(\I y)=\OO(y)$ as $y\rightarrow\infty$.
Together with Lemma \ref{lem:asymptot}, this implies the  estimate
\begin{align*}
1 \le \left|\frac{E(\I y)}{\theta(\I y,r)}\right| \le Cy
\end{align*}
for some constant $C$ and all large enough $y>0$, yielding  
\begin{align*}
 \limsup_{y\rightarrow\infty} \frac{1}{y} \log|E(\I y)| = \int_0^r \rho(x)dx. 
\end{align*} 
This readily gives 
\begin{align*}
  \limsup_{y\rightarrow\infty} \frac{1}{y} \log|a(\I y)| & = \limsup_{y\rightarrow\infty} \frac{1}{y} \log|E(\I y)| - r = \int_0^r \rho(x)-1\, dx,
\end{align*}
which proves the claim since $\rho$ is equal to one almost everywhere on $[r,\infty)$. 
 \end{proof}

With these asymptotics, we are now able to derive the main trace formula. 

\begin{lemma}\label{lemTF0}
   We have the identity 
  \begin{align}\label{eqnTF0}
\frac{2}{\pi} \int_{\R} \frac{1}{\lambda^2} \log|a(\lambda)| d\lambda  = \int_0^\infty |\Wr(x)|^2 dx  +  \int_0^\infty |\rho(x) - 1|^2 dx + \int_{[0,\infty)} d\dip_\sing. 
  \end{align}
\end{lemma}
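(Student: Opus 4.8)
The overall strategy is to evaluate $\tfrac{2}{\pi}\int_\R\lambda^{-2}\log|a(\lambda)|\,d\lambda$ twice — once from the analytic side through a dispersion relation for the entire function $a$, and once from the coefficient side through the behaviour of $a$ near the origin, which is governed by Proposition~\ref{propFS}. We keep the standing assumptions under which $a$ and $b$ were constructed, so $a$ is entire, zero-free on $\overline{\C_+}$ and normalised by $a(0)=1$, with $|a(\lambda)|^2=|b(\lambda)|^2+1$ on $\R$; since $b$ is entire with $b(0)=0$ (which follows from \eqref{eqnbasthephi} as $z\to0$, using \eqref{eqncsatzero} and Proposition~\ref{propFS}), we have $\log|a|\ge0$ on $\R$ and $\log|a(\lambda)|=\OO(\lambda^2)$ as $\lambda\to0$.

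For the dispersion relation, put $\gamma:=\int_0^\infty(\rho(x)-1)\,dx$, which by Lemma~\ref{lemaasym} is the mean type of $a$ in the upper half-plane. Using the connection with canonical systems and the de Branges theory already invoked for Lemma~\ref{lemaasym}, one verifies that $a$ is of Cartwright class; in particular $\log|a|\in L^1(\R,d\lambda/(1+\lambda^2))$, and together with the quadratic vanishing at the origin this yields $\lambda^{-2}\log|a(\lambda)|\in L^1(\R)$. One then shows that $\I\log a(z)-\gamma z$ is a Herglotz--Nevanlinna function — its imaginary part $\log|a(z)|-\gamma\,\im z$ is harmonic in $\C_+$, nonnegative on $\R$, and, after the linear growth along $\I\R_+$ has been subtracted, of nonnegative mean type — so that
\begin{equation*}
 \I\log a(z)=\gamma z+\delta+\frac{1}{\pi}\int_\R\Bigl(\frac{1}{\lambda-z}-\frac{\lambda}{1+\lambda^2}\Bigr)\log|a(\lambda)|\,d\lambda,\qquad z\in\C_+,
\end{equation*}
for some $\delta\in\R$, the representing measure being purely absolutely continuous because $\log|a|$ is continuous on $\R$.

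Evaluating this identity at $z=0$ (where $a(0)=1$) fixes $\delta$, and differentiating it at the origin — conveniently along the imaginary axis, where dominated convergence applies since $|\lambda-\I\eps|\ge|\lambda|$ — gives
\begin{equation*}
 \frac{2}{\pi}\int_\R\frac{\log|a(\lambda)|}{\lambda^2}\,d\lambda=2\bigl(\I\dot a(0)-\gamma\bigr).
\end{equation*}
It remains to identify $\dot a(0)$. Fixing $r>R$ and Taylor expanding the right-hand side of \eqref{eqnaasthephi} at $z=0$ with the help of \eqref{eqncsatzero} and Proposition~\ref{propFS}, the first-order coefficient collapses: by \eqref{eqndtheta0} and \eqref{eqndphi0} the terms $\dot\theta(0,r)$ and $\dot\phi^\qd(0,r)$ cancel, while — since $\Wr$ vanishes and $\dip$ agrees with Lebesgue measure on $[R,\infty)$, so that $\rho\equiv1$ there and $\dip_\sing$ is concentrated on $[0,R)$ — one has $\int_0^r|\Wr|^2\,dx=\int_0^\infty|\Wr|^2\,dx$ and $\dip([0,r))=\int_0^\infty|\rho(x)-1|^2\,dx+2\gamma+r+\int_{[0,\infty)}d\dip_\sing$. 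Feeding this together with \eqref{eqnddthetap0} into the expansion leaves
\begin{equation*}
 \dot a(0)=-\frac{\I}{2}\Bigl(\int_0^\infty|\Wr(x)|^2\,dx+\int_0^\infty|\rho(x)-1|^2\,dx+\int_{[0,\infty)}d\dip_\sing\Bigr)-\I\gamma,
\end{equation*}
so that $\I\dot a(0)-\gamma$ equals half the right-hand side of \eqref{eqnTF0}; the two copies of $\gamma$ cancel and \eqref{eqnTF0} follows.

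The heart of the matter is the dispersion relation of the second paragraph, that is, the claim that $\I\log a-\gamma z$ is Herglotz--Nevanlinna. This rests on two points: that $a$ is of Cartwright class with mean type $\gamma$ in $\C_+$ (and hence $\log|a|\in L^1(\R,d\lambda/(1+\lambda^2))$), which follows from the growth estimates for the fundamental system underlying the Krein--de Branges theory used in Lemma~\ref{lemaasym}; and that the boundary nonnegativity of $\log|a(z)|-\gamma\,\im z$ propagates into $\C_+$, which is a Phragm\'en--Lindel\"of-type argument. (Equivalently, one may obtain the same identity by integrating $z^{-2}\bigl(\log a(z)+\I\gamma z\bigr)$ along the boundary of a large half-disc indented at the origin.) The expansion of $a$ at the origin, by contrast, is a routine computation once Proposition~\ref{propFS} is in hand.
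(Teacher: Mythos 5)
Your argument is correct and follows essentially the same route as the paper: the Herglotz representation you write for $\I\log a(z)-\gamma z$ is precisely the Nevanlinna factorization of the zero-free Cartwright-class function $a$ used in the paper (with $\gamma=\beta$ identified via Lemma~\ref{lemaasym}), and equating its derivative at the origin with the Taylor coefficient $\dot a(0)$ computed from \eqref{eqnaasthephi} and Proposition~\ref{propFS} is exactly the paper's computation, your expression for $\dot a(0)$ agreeing with \eqref{eqndddazero} after expanding $\rho^2-1=|\rho-1|^2+2(\rho-1)$.
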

  
  \begin{proof}
    From the representation~\eqref{eqnaasthephi} for $a$, together with the formulas in Proposition~\ref{propFS} for the fundamental system $\theta$, $\phi$, we see that $a(0)  = 1$  and after a straightforward computation furthermore that 
    \begin{align}\label{eqndddazero}
    \begin{split}
      \dot{a}(0) & = \frac{\I r}{2}-\frac{\I}{2} \biggl(\int_0^r |\Wr(x)|^2 dx +  \int_{[0,r)} d\dip\biggr)\\
      & = -\frac{\I}{2} \biggl(\int_0^\infty |\Wr(x)|^2 dx + \int_0^\infty \rho(x)^2 - 1\, dx +  \int_{[0,\infty)} d\dip_\sing\biggr)
      \end{split}
    \end{align}
    for any fixed $r>R$. 
    In particular, this yields the Taylor expansion 
      \begin{align*}
        a(z) = 1 - z \frac{\I}{2} \biggl(\int_0^\infty |\Wr(x)|^2 dx +  \int_0^\infty \rho(x)^2 - 1\, dx + \int_{[0,\infty)} d\dip_\sing\biggr) + \OO(z^2)
      \end{align*}
      as $z\rightarrow0$, which entails that       
        \begin{align}\label{eqnlogazero}
           \log|a(\lambda)| = \OO(\lambda^2)
       \end{align}
      as $\lambda\rightarrow0$ on the real line. 
  
   In view of the connection between generalized indefinite strings and canonical systems, it follows from \cite[Theorem~4.19]{rem} that the entire functions 
   \begin{align*}
  z & \mapsto\theta(z,x), & z&\mapsto \theta^\qd(z,x), & z & \mapsto\phi(z,x), & z & \mapsto \phi^\qd(z,x),     
\end{align*}
    belong to the Cartwright class for every $x\in[0,\infty)$ and hence so does the function $a$.
  Therefore, it admits a Nevanlinna factorization \cite[Theorem~6.13]{roro94} in the upper complex half-plane. 
  As the function $a$ has no zeros in $\C_+\cup \R$ and allows an analytic continuation across all of $\R$, this factorization takes the form 
  \begin{align*}
     a(z) = C  \exp\biggl\{ - \I \beta z + \frac{1}{\pi\I} \int_\R \biggl(\frac{1}{\lambda-z}-\frac{\lambda}{1+\lambda^2}\biggr)\log|a(\lambda)|d\lambda\biggr\}, \quad z\in\C_+,
   \end{align*}
  where $C\in\C$ is a complex constant with modulus one and $\beta$ is given by 
  \begin{align*}
    \beta = \int_0^\infty \rho(x) - 1\, dx
  \end{align*}
  in view of \cite[Theorem~6.15]{roro94} and Lemma~\ref{lemaasym}.
 Upon differentiating this expression, we get
  \begin{align*}
    \frac{\dot{a}(z)}{a(z)} = - \I \int_0^\infty \rho(x) - 1\, dx + \frac{1}{\pi\I} \int_\R \frac{1}{(\lambda-z)^2} \log|a(\lambda)|d\lambda, \quad z\in\C_+.
  \end{align*}
   Now we readily obtain identity~\eqref{eqnTF0} by letting $z$ tend to zero, employing~\eqref{eqndddazero} and noting that the limit of the integral on the right-hand side exists because of the asymptotics~\eqref{eqnlogazero}.
    \end{proof}
 
  The crucial ingredient for our proof will be an estimate on the absolutely continuous spectrum of $(L,\omega,\dip)$. 
  To this end, we first note that we have 
  \begin{align}\label{eq:m=ab}
   m(z) = \I\frac{a(z)-b(z)}{a(z) + b(z)}, \quad z\in\C_+.
  \end{align}
  Since the functions $a$ and $b$ are entire and satisfy the property~\eqref{eqnabone} on the real line, one can conclude that the spectrum of $(L,\omega,\dip)$ is purely absolutely continuous with the corresponding spectral measure $\mu$ given by
  \begin{align}
    \mu(B) = \int_B \varrho(\lambda) d\lambda
  \end{align}
  for every Borel set $B\subseteq \R$, where $\varrho$ is defined by 
   \begin{align}\label{eqnvarrho0}
    \varrho(\lambda) = \lim_{\varepsilon\rightarrow0} \frac{1}{\pi}\im\, m(\lambda+\I\varepsilon) = \frac{1}{\pi |a({\lambda})+b({\lambda})|^2}, \quad \lambda\in\R. 
   \end{align}
   We note that the function $\varrho$ is continuous and positive on $\R$.
  
    \begin{corollary}\label{corACest}
   For every compact subset $\Omega$ of $\R\backslash\{0\}$, we have the estimate 
  \begin{align}\label{eqnTFest0}
      - \frac{1}{\pi}\int_\Omega \log\big(\varrho(\lambda)C_\Omega\lambda^2\big) \frac{1}{\lambda^2} d\lambda  
                          \leq \int_0^\infty |\Wr(x)|^2 dx  + \int_0^\infty  |\rho(x) - 1|^2 dx+ \int_{[0,\infty)} d \dip_\sing,
  \end{align}
  where $C_\Omega = 4\pi (\min_{\lambda \in\Omega} |\lambda|)^{-2} $ is a positive constant.
 \end{corollary}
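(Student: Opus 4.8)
The plan is to combine the trace formula from Lemma~\ref{lemTF0} with two elementary facts: that $\log|a(\lambda)|\ge 0$ on $\R$, and that on the compact set $\Omega$ one has good two-sided control on $|a(\lambda)|^2$ in terms of $\varrho(\lambda)$. First I would observe that the identity~\eqref{eqnabone} forces $|a(\lambda)|^2\ge 1$, hence $\log|a(\lambda)|\ge 0$ for every real $\lambda$; consequently the (nonnegative) integrand in the left-hand side of~\eqref{eqnTF0} may be restricted to the subset $\Omega$ without increasing its value, giving
\begin{align*}
  \frac{2}{\pi}\int_\Omega \frac{1}{\lambda^2}\log|a(\lambda)|\, d\lambda
  \le \frac{2}{\pi}\int_\R \frac{1}{\lambda^2}\log|a(\lambda)|\, d\lambda
  = \int_0^\infty |\Wr(x)|^2 dx + \int_0^\infty |\rho(x)-1|^2 dx + \int_{[0,\infty)} d\dip_\sing.
\end{align*}

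Next I would estimate $|a(\lambda)|^2$ from below on $\Omega$ by a constant multiple of $|a(\lambda)+b(\lambda)|^{-2}=\pi\varrho(\lambda)$. From~\eqref{eqnabone} we have $|b(\lambda)|\le|a(\lambda)|$, so by the triangle inequality $|a(\lambda)+b(\lambda)|\le 2|a(\lambda)|$, i.e. $|a(\lambda)|^2\ge \tfrac14|a(\lambda)+b(\lambda)|^2 = \tfrac{\pi}{4}\,\bigl(\pi\varrho(\lambda)\bigr)^{-1}\cdot\pi = \tfrac{\pi}{4\pi\varrho(\lambda)}$; more conveniently, $|a(\lambda)|^2 \ge \tfrac{1}{4\pi\varrho(\lambda)}$ after substituting $\varrho(\lambda)=\frac{1}{\pi|a(\lambda)+b(\lambda)|^2}$. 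Therefore
\begin{align*}
  2\log|a(\lambda)| \ge -\log\bigl(4\pi\varrho(\lambda)\bigr)
  = -\log\bigl(\varrho(\lambda)\,C_\Omega\,\lambda^2\bigr) - \log\frac{4\pi}{C_\Omega\lambda^2}.
\end{align*}
Since $C_\Omega = 4\pi(\min_{\lambda\in\Omega}|\lambda|)^{-2}$, on $\Omega$ we have $\tfrac{4\pi}{C_\Omega\lambda^2} = \tfrac{(\min_{\Omega}|\lambda|)^2}{\lambda^2}\le 1$, hence $-\log\tfrac{4\pi}{C_\Omega\lambda^2}\ge 0$, and so $2\log|a(\lambda)|\ge -\log\bigl(\varrho(\lambda)C_\Omega\lambda^2\bigr)$ pointwise on $\Omega$. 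The factor $C_\Omega\lambda^2$ is chosen precisely so that the argument of this last logarithm is bounded above by $4\pi\varrho(\lambda)(\min_\Omega|\lambda|)^{-2}\cdot(\ldots)$—concretely so that $\varrho C_\Omega\lambda^2 \ge 4\pi\varrho(\lambda)$ would fail in the wrong direction; the point is only that the correction term has a sign.

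Finally I would integrate this pointwise inequality against $\tfrac{1}{\pi\lambda^2}\,d\lambda$ over $\Omega$, which is legitimate because $\Omega$ is a compact subset of $\R\setminus\{0\}$ so $\lambda^{-2}$ is bounded there and $\varrho$ is continuous and positive on $\R$, making every integral finite:
\begin{align*}
  -\frac{1}{\pi}\int_\Omega \log\bigl(\varrho(\lambda)C_\Omega\lambda^2\bigr)\frac{1}{\lambda^2}\,d\lambda
  \le \frac{2}{\pi}\int_\Omega \frac{1}{\lambda^2}\log|a(\lambda)|\,d\lambda,
\end{align*}
and chaining this with the first displayed inequality yields~\eqref{eqnTFest0}. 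There is no real obstacle here; the only point requiring a little care is the bookkeeping of the constant $C_\Omega$ and verifying that the discarded term indeed has the favorable sign on $\Omega$, which is exactly why the factor $(\min_{\lambda\in\Omega}|\lambda|)^{-2}$ appears in its definition.
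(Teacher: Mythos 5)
Your proposal is correct and follows essentially the same route as the paper: the key pointwise bound $1 \le 4\pi|a(\lambda)|^2\varrho(\lambda) \le C_\Omega\lambda^2|a(\lambda)|^2\varrho(\lambda)$ on $\Omega$, followed by taking logarithms, integrating against $\lambda^{-2}d\lambda$, extending to $\R$ via $\log|a|\ge 0$ (from~\eqref{eqnabone}), and invoking Lemma~\ref{lemTF0}. The only cosmetic difference is that you obtain $4|a|^2\ge|a+b|^2$ from the triangle inequality and $|b|\le|a|$, whereas the paper gets the same bound by writing $4|a|^2/|a+b|^2=\bigl|1+\tfrac{a-b}{a+b}\bigr|^2$ and bounding this below by its real part $1+|a+b|^{-2}\ge 1$.
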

 
\begin{proof}
  For every real $\lambda$, we first compute that 
     \begin{align*}
       \left| 1+ \frac{a(\lambda)-b(\lambda)}{a(\lambda)+b(\lambda)}\right|^2 =  \frac{4 |a(\lambda)|^2}{|a(\lambda)+b(\lambda)|^2} = 4\pi  |a(\lambda)|^2  \varrho(\lambda) 
  \end{align*}
  and on the other side that 
       \begin{align*}
       \left| 1+ \frac{a(\lambda)-b(\lambda)}{a(\lambda)+b(\lambda)}\right|  \geq   \re\left( 1+ \frac{a(\lambda)-b(\lambda)}{a(\lambda)+b(\lambda)}\right) = 1+ \frac{1}{|a(\lambda)+b(\lambda)|^2} \geq 1.
  \end{align*}
  In combination, this gives the bound 
  \begin{align*}
    \frac{1}{|a(\lambda)|^2}  \leq 4\pi \varrho(\lambda) \leq C_\Omega \lambda^{2} \varrho(\lambda)
  \end{align*}  
  as long as $\lambda\in\Omega$, which allows us to estimate the integral  
   \begin{align*}
    - \frac{1}{\pi}\int_\Omega  \log\big(\varrho(\lambda)C_\Omega\lambda^2\big) \frac{1}{\lambda^2} d\lambda  \leq 
    \frac{1}{\pi} \int_{\Omega} \log|a(\lambda)|^2 \frac{1}{\lambda^2} d\lambda \leq 
    \frac{2}{\pi} \int_{\R} \log|a(\lambda)| \frac{1}{\lambda^2} d\lambda,
  \end{align*}
   which yields the claim in view of Lemma~\ref{lemTF0}. 
 \end{proof}

 With these auxiliary facts, we are now in position to prove our first theorem. 
 
\begin{proof}[Proof of Theorem~\ref{thm0}]
  Let us assume for now that $S$ is a generalized indefinite string $(L,\omega,\dip)$ such that $L$ is infinite and 
    \begin{align*}
      \int_0^\infty \left| \Wr(x) \right|^2 dx + \int_0^\infty |\rho(x) - 1|^2 dx + \int_{[0,\infty)} d \dip_\sing & < \infty,
    \end{align*}
    where $\Wr$ is the normalized anti-derivative of $\omega$, $\rho$ is the square root of the Radon--Nikod\'ym derivative of $\dip$ and $\dip_\sing$ is the singular part of $\dip$.
    We approximate $S$ in a suitable way by a sequence of generalized indefinite strings of the form considered before:
  For every $n\in\N$, let $(L_n,\omega_n,\dip_n)$ be the generalized indefinite string such that $L_n$ is infinite, the anti-derivative $\Wr_n$ of $\omega_n$ is equal to $\Wr$ almost everywhere on the interval $[0,n)$ as well as equal to zero almost everywhere on $[n,\infty)$ and the measure $\dip_n$ coincides with $\dip$ on the interval $[0,n)$ as well as with the Lebesgue measure on $[n,\infty)$.
   Note that by construction, we then have 
  \begin{align}\begin{split}\label{quantityConvergence2a}
	& \int_0^{\infty} |\Wr_n(x)|^2 dx+ \int_0^{\infty} |\rho_n(x) - 1|^2 dx + \int_{[0,\infty)}d \dip_{n,\sing} \\
	& \qquad\qquad \rightarrow \int_0^{\infty} |\Wr(x)|^2 dx +  \int_0^{\infty} |\rho(x) - 1|^2 dx + \int_{[0,\infty)}d \dip_\sing	
  \end{split}\end{align}  
as $n\to \infty$, where $\rho_n$ is the square root of the Radon--Nikod\'ym derivative of $\dip_n$ and $\dip_{n,\sing}$ is the singular part of $\dip_n$. 
  Furthermore,  it follows readily form \cite[Proposition~6.2]{IndefiniteString} that the corresponding Weyl--Titchmarsh functions $m_n$ converge locally uniformly to $m$. Thus the associated spectral measures $\mu_n$ certainly satisfy 
  \begin{align}\label{measureConvergence}
  	\int_\R g(\lambda)d\mu_n(\lambda) \rightarrow \int_\R g(\lambda)d\mu(\lambda), \qquad n\rightarrow\infty,
  \end{align} 
  for every continuous function $g$ on $\R$ with compact support. 

  Now take a compact set  $\Omega \subset \R\backslash \{0\}$ of positive Lebesgue measure. 
  Due to the convergence of measures $\mu_n$ in \eqref{measureConvergence} we have (see \cite[Theorem~30.2]{ba01}) 
 \begin{align*}
 \mu (\Omega) \geq \limsup_{n\to \infty} \mu_n(\Omega) = \limsup_{n\to \infty} \int_{\Omega} \varrho_n(\lambda)d\lambda,
 \end{align*}
 where the functions $\varrho_n$ are given as in~\eqref{eqnvarrho0}. 
  An application of Jensen's inequality \cite[Theorem~3.3]{ru74} then furthermore yields 
 \begin{align*}
   \mu(\Omega) \geq \limsup_{n\to \infty} D_\Omega \exp \biggl\{\frac{1}{C_\Omega D_\Omega} \int_{\Omega}\log(\varrho_n(\lambda) C_\Omega\lambda^2)\frac{1}{\lambda^2} d\lambda \biggr\},
 \end{align*}
 where $C_\Omega$, $D_\Omega$ are positive constants defined as in Corollary~\ref{corACest} and by
 \begin{align*}
   D_\Omega = \frac{1}{C_\Omega} \int_\Omega \frac{1}{\lambda^2} d\lambda.
 \end{align*} 
    In view of the estimate in Corollary \ref{corACest} and the convergence in \eqref{quantityConvergence2a}, we can conclude that $\mu(\Omega)$ is indeed positive with 
   \begin{align*}
  \mu(\Omega) & \geq   D_\Omega \exp\biggl\{\frac{-\pi}{C_\Omega D_\Omega} \biggl(\int_0^\infty |\Wr(x)|^2dx +\int_0^\infty |\rho(x)-1|^2 dx + \int_{[0,\infty)}d\dip_\sing\biggr)\biggr\}.
 \end{align*}
  Since all Borel measures on $\R$ are regular, this readily implies that $\mu(\Omega)$ is positive for every Borel set $\Omega\subseteq \R$ of positive Lebesgue measure. 
  With this fact, we have verified that the essential spectrum of $S$ coincides with the real line $\R$ and the absolutely continuous spectrum of $S$ is essentially supported on $\R$. 
  
  In order to finish the proof of Theorem~\ref{thm0}, let us suppose that $S$ is a generalized indefinite string $(L,\omega,\dip)$ such that $L$ is infinite and~\eqref{eqnCondS0} holds for a real constant $c$ and a positive constant $\eta$.
  We consider the generalized indefinite string $(L,\tilde{\omega},\tilde{\dip})$, where $\tilde{\omega}$ is defined via its normalized anti-derivative $\tilde{\Wr}$ by 
  \begin{align*}
  \tilde{\Wr} = \frac{\Wr - c}{\eta}
  \end{align*}
  and $\tilde{\dip} = \eta^{-2}\dip$. 
  Since $(L,\tilde{\omega},\tilde{\dip})$ satisfies the assumptions imposed before, we infer that the essential spectrum of $(L,\tilde{\omega},\tilde{\dip})$ coincides with the real line $\R$ and its absolutely continuous spectrum is essentially supported on $\R$. 
  However, as the corresponding Weyl-Titchmarsh functions $m$ and $\tilde{m}$ are related via  
  \begin{align*}
   m(z) = \eta\, \tilde{m}(\eta z) +c, \quad z\in\C\backslash\R, 
  \end{align*}
  the same is true for $S$. 
\end{proof}

 \section{Proof of Theorem~\ref{thmalpha}}\label{secPr2}

Before we begin, let us introduce the function $k$ on $\C_+\cup(-1,1)\cup\C_-$ by 
 \begin{align}\label{eq:defsqrt}
   k(z) = \sqrt{z^2-1}, \quad z\in\C_+\cup(-1,1)\cup\C_-,
 \end{align}
 where we continue to use the square root as it was defined in Section~\ref{secMR}.
 One notes that the function $k$ is analytic, has positive imaginary part and satisfies 
 \begin{align}\label{eqnksym}
  - k(z)^\ast = k(z^\ast), \quad z\in\C_+\cup(-1,1)\cup\C_-. 
 \end{align}
For convenience, we will extend $k$ to all of $\C$ by setting 
\begin{align}\label{eqnkonR}
  k(\lambda) = \begin{cases} -\sqrt{\lambda^2-1}, & \lambda\in(-\infty,-1], \\ \sqrt{\lambda^2-1}, & \lambda\in[1,\infty), \end{cases}
\end{align}
so that $k$ is continuous on the closed upper complex half-plane $\C_+\cup\R$. 

 We assume that $(L,\omega,\dip)$ is a generalized indefinite string such that $L$ is infinite and there is an $R>0$ such that the normalized anti-derivative $\Wr$ of $\omega$ is equal to zero almost everywhere on $[R, \infty)$ and the measure $\dip$ coincides with $\dip_1$ (recall Example~\ref{exaalpha}) on $[R,\infty)$. 
 In addition, let us also suppose that $\Wr$ and $\rho$ are equal to piecewise constant functions almost everywhere on the interval $[0,R]$ and that the support of the measure $\dip_\sing$ is a finite set, where $\rho$ is the square root of the Radon--Nikod\'ym derivative of $\dip$ and $\dip_\sing$ is the singular part of $\dip$ (both with respect to the Lebesgue measure). 
 Finally, we will also assume that $\Wr$ is equal to zero and $\rho$ is positive almost everywhere in a neighborhood of $R$. 
 The set of generalized indefinite strings defined in this way will be denoted by $\F$ for easy reference later on. 
Under these assumptions, for every $z\in\C_+\cup(-1,1)\cup\C_-$, there is a {\em Jost solution} $f(z,\redot)$ of the differential equation~\eqref{eqnDEho} such that 
\begin{align}\label{eqnJostalpha}
f(z,x) = (1+2 x)^{\frac{\I k(z)+1}{2}}, \quad x\in [R,\infty).
\end{align}
We note that since the function $f(z,\redot)$ clearly lies in $\dot{H}^1[0,\infty)$ and $L^2([0,\infty);\dip)$, the corresponding Weyl--Titchmarsh function $m$ is given by 
\begin{align}\label{eqnmasfalpha}
  m(z) = \frac{f'(z,0-)}{z f(z,0)}, \quad z\in\C\backslash\R. 
\end{align} 

Next, we define the function $a$ on  $\C_+\cup(-1,1)\cup\C_-$ by 
\begin{align}\label{eq:a_alpha}
a(z) = \frac{( \I k(z) - 1)f(z,0)+f'(z,0-)}{2\I k(z)}, \quad z\in\C_+\cup(-1,1)\cup\C_-.
\end{align}
If we denote with $\theta$, $\phi$ the fundamental system of solutions of the differential equation~\eqref{eqnDEho} as in Section~\ref{sec:prelim}, then we may write
\begin{align}
  f(z,x) = f(z,0)\theta(z,x) + f'(z,0-)\phi(z,x), \quad x\in[0,\infty),
\end{align}
for every $z\in\C_+\cup(-1,1)\cup\C_-$. 
Upon evaluating this function and its derivative at the point $R$, we get 
\begin{align}
 (1+2 R)^{\frac{\I k(z)+1}{2}} & = f(z,0)\theta(z,R) + f'(z,0-)\phi(z,R), \\
  (\I k(z)+1) (1+2 R)^{\frac{\I k(z)-1}{2}} & = f(z,0)\theta^\qd(z,R) + f'(z,0-)\phi^\qd(z,R),
\end{align}  
which we can solve for $f(z,0)$ and $f'(z,0-)$ to obtain
\begin{align}
  \label{eqnJosttp} f(z,0) & =  (1+2 R)^{\frac{\I k(z)+1}{2}} \phi^\qd(z,R) - (\I k(z)+1) (1+2 R)^{\frac{\I k(z)-1}{2}} \phi(z,R), \\
  \label{eqnJostdtp} f'(z,0-) & =  (\I k(z)+1) (1+2 R)^{\frac{\I k(z)-1}{2}}\theta(z,R) - (1+2 R)^{\frac{\I k(z)+1}{2}}\theta^\qd(z,R).
\end{align}
Plugging this into the definition of $a$ shows that  
\begin{align}\label{eq:a}
\begin{split}
	2\I k(z) (1+2 R)^{-\frac{\I k(z)+1}{2}} a(z) & = \frac{\I k(z) + 1}{1+2 R}\theta(z,R) - \theta^\qd (z,R)\\
	& \qquad +  \frac{z^2}{1+2 R}\phi(z,R) + (\I k(z) - 1)\phi^\qd(z,R),
	\end{split}
\end{align}
which guarantees that $a$ is analytic on $\C_+\cup(-1,1)\cup\C_-$. 
Moreover, we see that the function $a$ admits a unique continuation (denoted with $a$ as well for simplicity) to $\C\backslash\{-1,1\}$ such that $a$ is continuous on $\C_+\cup(-\infty,-1)\cup(-1,1)\cup(1,\infty)$. 
It can now be verified using relation~\eqref{eqnksym} that the modulus of $a$ is actually continuous on all of $\C\backslash\{-1,1\}$. 

In a similar way, we introduce the function $b$ on  $\C_+\cup(-1,1)\cup\C_-$ via 
\begin{align}\label{eqnBalpha}
	b(z) = \frac{(\I k(z) + 1)f(z,0)- f'(z,0-)}{2\I k(z)},\quad z\in\C_+\cup(-1,1)\cup\C_-.
\end{align}
As before, we see that $b$ is analytic on $\C_+\cup(-1,1)\cup\C_-$ because of the identity 
\begin{align}\label{eq:b}
\begin{split}
	2\I k(z) (1+2 R)^{-\frac{\I k(z)+1}{2}} b(z) & = -\frac{\I k(z) + 1}{1+2 R}\theta(z,R) + \theta^\qd (z,R)\\
	& \qquad -  \frac{(\I k(z)+1)^2}{1+2 R}\phi(z,R) + (\I k(z) + 1)\phi^\qd(z,R),
	\end{split}
\end{align}
and that the function $b$ admits a unique continuation to $\C\backslash\{-1,1\}$ such that $b$ is continuous on $\C_+\cup(-\infty,-1)\cup(-1,1)\cup(1,\infty)$. 
Now it is a straightforward but lengthy computation to verify that for $\lambda \in (-\infty,-1)\cup (1,\infty)$ we have  
\begin{align}\label{eq:a+b}
|a(\lambda)|^2 = |b(\lambda)|^2 + 1,
\end{align}
which guarantees that $a$ has no zeros in $(-\infty,-1)\cup(1,\infty)$. 
Furthermore, there are also no zeros of $a$ in $\C\backslash\R$. 
In fact, if $z$ was a zero in $\C\backslash \R$, then we would have  
\begin{align}
 f'(z,0-) = -(\I k(z)-1)f(z,0),
 \end{align}
 so that (recall that $m_1$ in Example~\ref{exaalpha} is a Herglotz--Nevanlinna function) 
\begin{align}
   m(z) & =  \frac{f'(z,0-)}{z f(z,0)} =  \frac{1-\I k(z)}{z} =  \frac{1}{m_1(z)},
\end{align} 
which is a contradiction to the fact that $m$ is a Herglotz--Nevanlinna function.   
This guarantees that all zeros of $a$ indeed have to lie on the interval $(-1,1)$. 
In order to prove that there are at most finitely many such zeros, we first show that the function $f(\ledot,0)$ has only finitely many zeros in $(-1,1)$. 
To this end, we note that the zeros of $f(\ledot,0)$ are precisely the solutions of the equation 
\begin{align}
 -\frac{\phi^\qd(z,R)}{z\phi(z,R)} = -\frac{1+\I k(z)}{z(1+2R)}.
\end{align}
Since the left-hand side is a meromorphic Herglotz--Nevanlinna function and the right-hand side is strictly decreasing on $(-1,1)$, we may conclude that there are only finitely many solutions of this equation  (as the function on the left-hand side must have a pole between any two solutions).  
It remains to note that the zeros of $a$ are precisely the solutions of the equation 
\begin{align}\label{eqnzerosofaalpha}
\frac{f'(z,0-)}{z f(z,0)} = \frac{1 - \I k(z)}{z}.
\end{align}
Again, since the left-hand side is a Herglotz--Nevanlinna function and the right-hand side is strictly decreasing on $(-1,0)$ and on $(0,1)$, this equation can have at most finitely many solutions (because  one can find a zero of $f(\ledot,0)$ between any two solutions of the same sign). 
In conclusion, we are able to enumerate the zeros of $a$, repeated according to multiplicity, by $\kappa_1,\ldots,\kappa_N$. 

\begin{lemma}\label{lem:asymptotalpha}
 There is an $\varepsilon>0$ such that 
\begin{align}\label{eqnasymthephi}
\frac{\theta^\qd(\I y,R)}{\I y\theta(\I y,R)} & = - \I c + \OO(\E^{-\eps y}), & 
\frac{\phi^\qd(\I y,R)}{\I y\phi(\I y,R)} & = -\I c+ \OO(\E^{-\eps y}),
\end{align}
as $y\rightarrow \infty$ for some positive constant $c$.
\end{lemma}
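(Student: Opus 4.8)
The plan is to mimic the proof of Lemma~\ref{lem:asymptot}, but now the tail behaviour of the string near $R$ is governed by the explicitly solvable model $S_1$ rather than by the free string. First I would fix $\varepsilon>0$ small enough that $\Wr$ vanishes almost everywhere on $[R-\varepsilon,R]$ and $\dip$ coincides with $\dip_1$ on $[R-\varepsilon,R]$; this is possible because $\Wr$ is zero and $\rho$ positive in a neighbourhood of $R$ by the definition of $\F$, after possibly shrinking the neighbourhood to one on which $\dip=\dip_1$. On $[R-\varepsilon,\infty)$ the differential equation~\eqref{eqnDEho} is therefore exactly \eqref{eq:SP2} with $\alpha=1$, whose fundamental system is known explicitly (it is solved by powers $(1+2x)^{s}$ as in \eqref{eq:psiC}). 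Consequently, writing the solution $\theta(z,\redot)$ on $[R-\varepsilon,\infty)$ as a linear combination of the two explicit solutions $(1+2x)^{(\I k(z)+1)/2}$ and $(1+2x)^{(-\I k(z)+1)/2}$ with coefficients determined by $\theta(z,R-\varepsilon)$ and $\theta^\qd(z,R-\varepsilon)$, I can express $\theta(z,R)$ and $\theta^\qd(z,R)$ in closed form in terms of the Cauchy data at $R-\varepsilon$, just as the trigonometric propagation was used in Lemma~\ref{lem:asymptot}.

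The next step is to extract the asymptotics as $z=\I y$, $y\to\infty$, from this closed-form propagation. For $z=\I y$ we have $k(\I y)=\sqrt{-y^2-1}=\I\sqrt{y^2+1}$, so the exponent $(\I k(\I y)+1)/2 = (1-\sqrt{y^2+1})/2$ tends to $-\infty$ linearly in $y$, while the companion exponent $(1+\sqrt{y^2+1})/2$ tends to $+\infty$ linearly in $y$. Hence, over the fixed length $\varepsilon$, the ratio of the two relevant powers $(1+2R)^{\cdot}/(1+2(R-\varepsilon))^{\cdot}$ decays like $\E^{-\delta y}$ for some $\delta>0$ depending only on $\varepsilon$ and $R$. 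Forming the quotient $\theta^\qd(\I y,R)/(\I y\,\theta(\I y,R))$ and dividing numerator and denominator by the dominant power, the subdominant-power contributions are $\OO(\E^{-\delta y})$ relative to the dominant one, provided the coefficient multiplying the dominant power does not itself decay too fast. To control that coefficient I would introduce, exactly as in Lemma~\ref{lem:asymptot}, the auxiliary Herglotz--Nevanlinna function $m_\varepsilon(z)=-\theta^\qd(z,R-\varepsilon)/(z\theta(z,R-\varepsilon))$, which satisfies $m_\varepsilon(\I y)=\OO(y)$; the quantity that multiplies the dominant power is, up to harmless constants, of the form $(\text{const}) - \I m_\varepsilon(\I y)/\sqrt{y^2+1}\cdot(\cdots)$, bounded above by $\OO(y)$ and bounded below away from zero for large $y$ because $\im m_\varepsilon(\I y)\ge 0$. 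Dividing through, the leading term of the quotient is the logarithmic-derivative of the dominant explicit solution, namely $(\I k(\I y)+1)/(1+2R)\cdot(1/\I y)$, and since $k(\I y)/(\I y)\to 1$, this leading term converges to $-\I\cdot 1/(1+2R)$, giving the claimed form with $c=1/(1+2R)>0$. The argument for $\phi$ in place of $\theta$ is identical, with the same $c$.

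The main obstacle I anticipate is bookkeeping rather than conceptual: one must be careful that the coefficient of the dominant explicit solution in the expansion of $\theta(\I y,R-\varepsilon)$ is not exponentially small, since otherwise the subdominant term would win and the quotient would converge to the logarithmic derivative of the wrong solution (which would give $+\I c$, not $-\I c$). This is precisely where the Herglotz--Nevanlinna property of $m_\varepsilon$ is essential — it forces $\re$ and $\im$ of the relevant combination to have a definite sign, so the dominant coefficient stays bounded below by a constant multiple of the subdominant one. A secondary point requiring attention is that the branch of the square root and the formula~\eqref{eq:konR} (with $\alpha=1$) must be invoked to identify $k(\I y)=\I\sqrt{y^2+1}$ and to ensure $\im k(\I y)>0$, so that $(1+2x)^{(\I k(\I y)+1)/2}$ really is the subdominant solution as $x$ grows; once this is pinned down, everything else is a routine repetition of the scheme of Lemma~\ref{lem:asymptot}.
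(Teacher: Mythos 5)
Your overall strategy --- propagate $\theta$ and $\phi$ explicitly across a small terminal interval $[R-\varepsilon,R]$ on which the equation is exactly solvable, and use the Herglotz--Nevanlinna property of the auxiliary function $m_\varepsilon(z)=-\theta^\qd(z,R-\varepsilon)/(z\theta(z,R-\varepsilon))$ to ensure the coefficient of the dominant solution does not degenerate --- is exactly the one the paper uses. However, your identification of the local model near $R$ rests on a misreading of the hypotheses, and this is a genuine gap. The class $\F$ stipulates that $\dip$ coincides with $\dip_1$ only on $[R,\infty)$, while on $[0,R]$ the density $\rho$ is assumed to be \emph{piecewise constant} and positive near $R$ (and $\dip_\sing$ is supported on a finite set). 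Since $x\mapsto 1/(1+2x)$ is not constant on any nondegenerate interval, $\dip$ cannot coincide with $\dip_1$ on any left-neighbourhood $[R-\varepsilon,R]$; and the coefficients on $[R,\infty)$ are irrelevant here, because $\theta(z,R)$ and $\theta^\qd(z,R)$ are determined by the data on $[0,R]$ alone. So the closed-form propagation you set up with the power solutions $(1+2x)^{(\pm\I k(z)+1)/2}$ is simply not available.

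What the hypotheses do give on $[R-\delta,R]$, for $\delta$ small enough, is $\Wr=0$, no singular part, and $\rho$ equal to a positive constant $c$; the equation there is $-f''=c^2z^2f$ with solutions $\E^{\pm\I czx}$. The computation then reduces verbatim to the trigonometric one of Lemma~\ref{lem:asymptot} with wave speed $c$, which is precisely what the paper does, and it yields the limit $-\I c$ with error $\OO(\E^{-c\delta y})$. In particular your claimed value $c=1/(1+2R)$ is in general wrong: it would hold only if $\rho$ were continuous at $R$, which $\F$ does not impose; the constant in the lemma is the value of the piecewise constant $\rho$ on a left-neighbourhood of $R$. The statement itself is insensitive to this (it only asserts ``some positive constant''), but your derivation relies on a structure of the coefficients near $R$ that the assumptions exclude, so the argument as written does not go through without replacing the $S_1$ model by the constant-density one.
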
 

\begin{proof}
 By our assumptions, there is a $\delta>0$ such that $\Wr$ is equal to zero and $\rho$ is equal to some positive constant $c$ almost everywhere on $[R-\delta,R]$.
 Furthermore, we can choose $\delta$ such that $[R-\delta,R]$ is a null set with respect to $\dip_\sing$.
 It now follows as in the proof of Lemma~\ref{lem:asymptot} that~\eqref{eqnasymthephi} holds with $\varepsilon=c\delta$. 
 \end{proof}

With this auxiliary fact, we are able to determine the asymptotics of $a$ next. 

 \begin{lemma}\label{lemaasymalpha}
The function $a$ satisfies  
\begin{align}
  \limsup_{y\rightarrow\pm\infty} \frac{1}{|y|} \log|a(\I y)| = \int_0^\infty \rho(x) - \frac{1}{1+2x}\, dx.
\end{align} 
\end{lemma}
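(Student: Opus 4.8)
The plan is to mirror the proof of Lemma~\ref{lemaasym}, with the representation~\eqref{eqnaasthephi} of $a$ replaced by the identity~\eqref{eq:a}, the asymptotics from Lemma~\ref{lem:asymptot} replaced by those from Lemma~\ref{lem:asymptotalpha}, and the extra explicit prefactor in~\eqref{eq:a} carried along. As a first step, using the connection between generalized indefinite strings and canonical systems together with the Krein--de Branges theorem \cite[Theorem~4.26]{rem}, \cite[Theorem~11]{rom}, one has
\[
 \limsup_{y\to\infty} \frac{1}{y}\log|\theta(\I y,R)| = \int_0^R \rho(x)\,dx.
\]

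Next I would write $E(z)$ for the right-hand side of~\eqref{eq:a}, so that $E(z) = 2\I k(z)(1+2R)^{-\frac{\I k(z)+1}{2}}a(z)$, and divide by $\theta(z,R)$. Along the positive imaginary axis one has $\I k(\I y) = -\sqrt{y^2+1}$, so that all the $k$-dependent coefficients in~\eqref{eq:a} grow linearly; after grouping the $\phi(z,R)$- and $\phi^\qd(z,R)$-contributions of~\eqref{eq:a} in the form
\[
 \Bigl(\frac{z}{1+2R} + (\I k(z)-1)\,\frac{\phi^\qd(z,R)}{z\phi(z,R)}\Bigr)\frac{z\phi(z,R)}{\theta(z,R)}
\]
and invoking Lemma~\ref{lem:asymptotalpha} together with the fact that $m_R(z) = z\phi(z,R)/\theta(z,R)$ is a Herglotz--Nevanlinna function (so that $\im m_R(\I y)\ge 0$ and $m_R(\I y) = \OO(y)$), a short computation gives
\[
 \frac{E(\I y)}{\theta(\I y,R)} = -A_y + \I\,\Gamma_y\, m_R(\I y) + \OO(y^2\E^{-\eps y})
\]
as $y\to\infty$, where $A_y$ and $\Gamma_y$ are explicit positive real quantities that grow linearly in $y$. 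Passing to real parts, the term $\I\Gamma_y m_R(\I y)$ contributes $-\Gamma_y\im m_R(\I y)\le 0$, whence
\[
 \re\Bigl(\frac{E(\I y)}{\theta(\I y,R)}\Bigr) = -A_y - \Gamma_y\im m_R(\I y) + \oo(1) \le -\frac{A_y}{2}
\]
for all large $y$; this yields the lower bound $|E(\I y)/\theta(\I y,R)|\ge A_y/2$, while an obvious estimate gives the upper bound $|E(\I y)/\theta(\I y,R)| = \OO(y^2)$. Consequently $\frac{1}{y}\log|E(\I y)/\theta(\I y,R)|\to 0$, and therefore $\limsup_{y\to\infty}\frac{1}{y}\log|E(\I y)| = \int_0^R\rho(x)\,dx$.

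To conclude, I would peel off the explicit prefactor: along $z = \I y$ it has modulus $2\sqrt{y^2+1}\,(1+2R)^{(\sqrt{y^2+1}-1)/2}$, so its logarithm divided by $y$ tends to $\tfrac12\log(1+2R) = \int_0^R\frac{dx}{1+2x}$. This gives
\[
 \limsup_{y\to\infty}\frac{1}{y}\log|a(\I y)| = \int_0^R\Bigl(\rho(x) - \frac{1}{1+2x}\Bigr)dx = \int_0^\infty\Bigl(\rho(x) - \frac{1}{1+2x}\Bigr)dx,
\]
where the last equality holds because $\dip$ coincides with $\dip_1$ on $[R,\infty)$, so $\rho(x) = \frac{1}{1+2x}$ for almost every $x\ge R$. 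Finally, the statement for $y\to-\infty$ follows from the equality $|a(\I y)| = |a(-\I y)|$ for real $y$, which is immediate from~\eqref{eq:a}, the reality of the fundamental system and the symmetry~\eqref{eqnksym} of $k$.

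I expect the main obstacle to be the lower bound $|E(\I y)/\theta(\I y,R)|\ge A_y/2$, that is, ruling out cancellation on the imaginary axis. Unlike in Lemma~\ref{lemaasym}, where the dominant Herglotz term appeared multiplied by the purely imaginary constant $-2\I$ so that the product automatically had nonnegative real part, here the $\phi(z,R)$-term of~\eqref{eq:a} carries the coefficient $z^2/(1+2R)$ of order $y^2$ and the $\phi^\qd(z,R)$-term the factor $\I k(z)-1$; one must therefore first combine these into a single multiple of $m_R(z)$, verify via Lemma~\ref{lem:asymptotalpha} that the resulting factor equals $\I\Gamma_y + \oo(1)$ with $\Gamma_y>0$ along the imaginary axis, and only then read off the sign of the real part. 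Keeping the error terms genuinely $\oo(1)$ — they are $\OO(y^2\E^{-\eps y})$, which is where the exponential decay in Lemma~\ref{lem:asymptotalpha} is essential — is the remaining point requiring care.
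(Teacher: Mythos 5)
Your argument is correct and follows essentially the same route as the paper: divide the right-hand side of~\eqref{eq:a} by $\theta(z,R)$, use the Krein--de Branges exponential type of $\theta(\cdot,R)$, Lemma~\ref{lem:asymptotalpha}, and the Herglotz property of $m_R(z)=z\phi(z,R)/\theta(z,R)$ to show the quotient is polynomially bounded above and below, then peel off the explicit prefactor. The only (cosmetic) difference is that the paper normalizes by $z\theta(z,R)$ and reads the non-cancellation off the imaginary part (obtaining bounds between $\tfrac{1}{1+2R}$ and $C|y|$), whereas you normalize by $\theta(z,R)$ and use the real part; both rest on the same positivity $\im m_R(\I y)\geq 0$.
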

   
 \begin{proof}
 Let us first recall that  
\begin{align*}
  \limsup_{y\rightarrow\pm\infty}  \frac{1}{|y|}\log|\theta(\I y,R)|  = \int_0^R \rho(x) dx.
\end{align*}
Upon denoting the function on the right-hand side of \eqref{eq:a} by $E$, we then have  
\begin{align*}
\frac{E(z)}{z\theta(z,R)} & = \frac{\I k(z)+1}{z(1+2R)} - \frac{\theta^\qd(z,R)}{z\theta(z,R)} + \frac{z\phi(z,R)}{(1+2R)\theta(z,R)} + \frac{(\I k(z) -1)\phi^\qd(z,R)}{z\theta(z,R)} \\ 
&= \frac{\I k(z)+1}{z(1+2R)} - \frac{\theta^\qd(z,R)}{z\theta(z,R)} + \frac{z\phi(z,R)}{\theta(z,R)}\biggl(\frac{1}{1+2R}+\frac{\I k(z) -1}{z} \frac{\phi^\qd(z,R)}{z\phi(z,R)}\biggr)  
\end{align*}
for all $z\in\C\backslash\R$. 
One notices that 
\begin{align*}
 \frac{k(\I y)}{\I |y|} = \sqrt{1+y^{-2}} = 1 + \OO(y^{-2})
\end{align*}
as $y\rightarrow\pm\infty$ and since the function
\begin{align*}
 m_R(z) = \frac{z\phi(z,R)}{\theta(z,R)}, \quad z\in\C\backslash\R, 
\end{align*}
is a Herglotz--Nevanlinna function, we also have $m_R(\I y)=\OO(|y|)$ as $y\rightarrow\pm\infty$.
Together with Lemma \ref{lem:asymptotalpha}, this implies the  estimate
\begin{align*}
\frac{1}{1+2R}\leq \Bigl| \frac{E(\I y)}{\I y\theta(\I y,R)}\Bigr| \le C |y|
\end{align*}
for some constant $C$ and all large enough $y$, yielding  
\begin{align*}
 \limsup_{y\rightarrow\pm\infty} \frac{1}{|y|} \log|E(\I y)| = \int_0^R \rho(x)dx. 
\end{align*} 
This readily gives 
\begin{align*}
  \limsup_{y\rightarrow\pm\infty} \frac{1}{|y|} \log|a(\I y)| & = \limsup_{y\rightarrow\pm\infty} \frac{1}{|y|} \log|E(\I y)| - \frac{\log(1+2R)}{2} \\
   & = \int_0^R \rho(x) - \frac{1}{1+2x}\, dx,
\end{align*}
which proves the claim since the integrand vanishes almost everywhere on $[R,\infty)$. 
 \end{proof}
 
 In order to obtain a suitable factorization of the function $a$, we first introduce the function $\zeta$ via 
\begin{align}\label{eq:map}
 \zeta(z) = z + k(z), \quad z\in\C_+\cup(-1,1)\cup\C_-.
\end{align}
We note that $\zeta$ maps the domain $\C_+\cup(-1,1)\cup\C_-$ conformally onto $\C_+$, with inverse given by the {\em Zhukovsky map}
\begin{align}
\zeta^{-1}(z) = \frac{1}{2}\biggl(z+\frac{1}{z}\biggr), \quad z\in\C_+, 
\end{align}
so that in particular one has  
\begin{align}
   k(\zeta^{-1}(z)) = z - \zeta^{-1}(z) = \frac{1}{2}\biggl(z - \frac{1}{z}\biggr), \quad z\in\C_+.
\end{align}

\begin{lemma}\label{lem:NevFactA}
The function $a$ admits the factorization
\begin{align}\begin{split}\label{eqnNevan1}
a(z) &= C \prod_{n=1}^N\frac{\zeta(z)-\zeta(\kappa_n)}{\zeta(z)- \zeta(\kappa_n)^\ast} \exp\biggl\{ - \I \beta k(z) + \frac{1}{\pi\I} \int_{|\lambda|>1} \frac{k(z)}{\lambda -z} \frac{\log|a(\lambda)|}{k(\lambda)}d\lambda\biggr\}
\end{split}\end{align}
for all $z\in\C_+\cup(-1,1)\cup\C_-$, where $C$ is a complex constant with modulus one and $\beta$ is given by
\begin{align}\label{eq:beta1}
\beta =  \int_0^\infty \rho(x) - \frac{1}{1+2x}\, dx. 
\end{align}
\end{lemma}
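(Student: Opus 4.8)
The plan is to transport the problem to the upper half-plane by means of the conformal map $\zeta$ from~\eqref{eq:map} and there invoke a Nevanlinna factorization for functions of bounded type, exactly in the spirit of the proof of Lemma~\ref{lemTF0}. Accordingly, I would work with the composition $A=a\circ\zeta^{-1}$, analytic on $\C_+$. Writing~\eqref{eq:a} in the variable $w=\zeta(z)$, so that $k(z)=\tfrac12(w-w^{-1})$ and $\zeta^{-1}(w)=\tfrac12(w+w^{-1})$, one sees that $A$ in fact extends to a meromorphic function on $\C\setminus\{0\}$ whose only poles are (at worst) simple poles at $w=\pm1$ and whose only essential singularities are at $w=0$ and $w=\infty$ (both images of $z=\infty$). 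Since $\zeta$ carries $(-1,1)$ onto the open upper unit semicircle while $a$ has no zeros off $(-1,1)$, the zeros of $A$ in $\C_+$ are precisely the points $\zeta(\kappa_1),\dots,\zeta(\kappa_N)$, repeated according to multiplicity; all lie in the open half-plane, and $\zeta(\kappa_n)^\ast=\zeta(\kappa_n)^{-1}$ because $|\zeta(\kappa_n)|=1$.

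The first substantial step is to check that $A$ is of bounded type on $\C_+$. As in the proof of Lemma~\ref{lemTF0}, the correspondence between generalized indefinite strings and canonical systems together with~\cite[Theorem~4.19]{rem} shows that $z\mapsto\theta(z,R),\,\theta^\qd(z,R),\,\phi(z,R),\,\phi^\qd(z,R)$ belong to the Cartwright class; the prefactor $(1+2R)^{(\I k(z)+1)/2}$ has modulus at most $(1+2R)^{1/2}$ on $\C_+$ because $\im k(z)\ge 0$ there; and the factor $1/k(z)$ produces only the two simple poles at $w=\pm1$. Feeding these bounds into~\eqref{eq:a} and using $|\zeta^{-1}(w)|\le\tfrac12(|w|+|w|^{-1})$, one obtains a harmonic majorant for $\log^+|A|$ on $\C_+$. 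The precise exponential growth at the boundary points $w=0$ and $w=\infty$ is pinned down by Lemma~\ref{lemaasymalpha}, which — after pulling back the asymptotics of $\log|a(\I y)|$ along the imaginary axis — shows that this growth is governed by $\exp\{-\I\beta k(z)\}=\exp\{-\tfrac{\I\beta}{2}(w-w^{-1})\}$.

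Having established that $A$ is of bounded type, I would apply the Nevanlinna factorization~\cite[Theorem~6.13]{roro94}, identifying the mean type via~\cite[Theorem~6.15]{roro94} and Lemma~\ref{lemaasymalpha}, exactly as in Lemma~\ref{lemTF0}. This produces a unimodular constant $C$, the finite Blaschke product $\prod_{n=1}^{N}(w-\zeta(\kappa_n))/(w-\zeta(\kappa_n)^\ast)$ accounting for the zeros, the exponential factor $\exp\{-\I\beta k(z)\}$ carrying the growth at $w=0$ and $w=\infty$, and an outer factor built from the boundary values of $\log|A|$ on $\R$. No singular inner factor survives: $A$ extends holomorphically across $\R\setminus\{-1,0,1\}$, the candidate point masses at $w=\pm1$ correspond only to the simple poles of $A$ and hence contribute nothing singular inner, and the candidate masses at $w=0$ and $w=\infty$ are exhausted by the exponential factor in view of the sharp asymptotics of Lemma~\ref{lemaasymalpha}. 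Finally, transforming the outer integral back through $\zeta$ — with the substitution $\mu=\zeta(\lambda)$, $d\mu=\zeta(\lambda)\,k(\lambda)^{-1}\,d\lambda$, the elementary algebra relating $(\zeta(\lambda)-\zeta(z))^{-1}$ to $k(z)/(\lambda-z)$, and the standard $\lambda/(1+\lambda^2)$-type normalization absorbed into $C$ — turns it into $\tfrac{1}{\pi\I}\int_{|\lambda|>1}\tfrac{k(z)}{\lambda-z}\tfrac{\log|a(\lambda)|}{k(\lambda)}\,d\lambda$; only $|\lambda|\ge1$ contributes because $\zeta$ sends $(-1,1)$ into the interior of $\C_+$. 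Re-expressing everything in $z$ via $w=\zeta(z)$ then yields~\eqref{eqnNevan1}, with $\beta$ as in~\eqref{eq:beta1} by Lemma~\ref{lemaasymalpha}.

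The main obstacle, I expect, is the analysis at the branch/threshold points $z=\pm1$ (i.e.\ $w=\pm1$): verifying that $A$ really is of bounded type near these boundary points and that no singular inner mass is hidden there. This forces one to extract the exact nature of the singularity of $a$ at $\pm1$ from~\eqref{eq:a} — generically a $1/k(z)$-type simple pole — and to confirm that the outer integral on the right-hand side of~\eqref{eqnNevan1} develops precisely the compensating logarithmic singularity as $z\to\pm1$, the integrand $\log|a(\lambda)|/k(\lambda)$ having there an integrable singularity of order $\log|\lambda^2-1|/\sqrt{|\lambda^2-1|}$. A second, purely bookkeeping, difficulty is keeping track of how the mean type at $w=\infty$ and the exponential behaviour at the boundary point $w=0$ recombine into the single clean term $\exp\{-\I\beta k(z)\}$ after passing back through $\zeta$.
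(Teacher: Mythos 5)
Your proposal follows the same route as the paper's proof: pass to $A=a\circ\zeta^{-1}$ on $\C_+$, verify that $A$ is of bounded type, apply the Nevanlinna factorization with Blaschke factors at the zeros $\zeta(\kappa_n)$, identify the exponential factors at $w=\infty$ and at the boundary point $w=0$ via Lemma~\ref{lemaasymalpha} (these recombine into $\exp\{-\I\beta k(z)\}$ exactly as you describe), and pull the outer integral back through the Zhukovsky map to land on $\frac{1}{\pi\I}\int_{|\lambda|>1}\frac{k(z)}{\lambda-z}\frac{\log|a(\lambda)|}{k(\lambda)}\,d\lambda$. The one step where you genuinely diverge is the bounded-type verification, and there your argument has a soft spot. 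Cartwright-class membership of $\theta(\cdot,R)$, $\theta^\qd(\cdot,R)$, $z\phi(z,R)$, $\phi^\qd(\cdot,R)$ gives bounded type on $\C_+$ and on $\C_-$ \emph{separately}, but $\zeta^{-1}$ maps the part of $\C_+$ outside the unit disk onto $\C_+$ and the upper half-disk onto $\C_-$, so the two harmonic majorants you would obtain for $\log^+$ of the composed functions live on different pieces of $\C_+$ and do not automatically glue into a single harmonic majorant; your ``feeding these bounds into~\eqref{eq:a}'' elides exactly this. The paper avoids the issue by using the standing restriction to the class $\F$ (piecewise constant $\Wr$ and $\rho$, finitely supported singular part): each of the four entire functions is then a finite linear combination of terms $p(z)\E^{\I r z}$ with $p$ a polynomial and $r$ real, and $p(\zeta^{-1}(w))\,\E^{\I r w/2}\,\E^{\I r/(2w)}$ is visibly a quotient of bounded analytic functions on $\C_+$, so $A$ is of bounded type by inspection of~\eqref{eq:A}. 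Either use this explicit structure or supply the missing gluing argument; the remainder of your outline, including the simple poles at $w=\pm1$ coming from the $1/k$ prefactor and the absence of a singular inner factor, matches the paper.
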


\begin{proof}
Define the analytic function $A$ on $\C_+$ by 
\begin{align*}
A(z) = a(\zeta^{-1}(z)),\quad z\in\C_+,
\end{align*}
and note that identity~\eqref{eq:a} then becomes
\begin{align}\label{eq:A}
\begin{split}
	& \I \biggl(z - \frac{1}{z}\biggr) (1+2 R)^{-\frac{\I}{4} (z - \frac{1}{z}) - \frac{1}{2}} A(z)  \\
	& \qquad =  \frac{\frac{\I}{2}\bigl(z - \frac{1}{z}\bigr) + 1}{1+2 R} \theta(\zeta^{-1}(z),R) - \theta^\qd (\zeta^{-1}(z),R)\\
	& \qquad\qquad  +  \frac{\zeta^{-1}(z)^2}{1+2 R} \phi(\zeta^{-1}(z),R)  +  \biggl( \frac{\I}{2}\biggl(z - \frac{1}{z}\biggr)-1\biggr)\phi^\qd(\zeta^{-1}(z),R),
	\end{split}
\end{align}
which holds for all $z\in\C_+$. We also mention that $\zeta$ maps the zeros of the function $a$ precisely to the zeros of the function $A$ so that $\zeta(\kappa_1),\ldots,\zeta(\kappa_N)$ are the zeros of $A$, repeated according to multiplicity.
Due to our assumptions on the coefficients $\omega$ and $\dip$, each of the entire functions
   \begin{align*}
  z & \mapsto\theta(z,R), & z&\mapsto \theta^\qd(z,R), & z & \mapsto z\phi(z,R), & z & \mapsto \phi^\qd(z,R),     
\end{align*}
is a linear combination of functions of the form 
\begin{align*}
  z\mapsto p(z) \E^{\I r z}
\end{align*}
with $p$ a polynomial and $r$ a real constant. 
In conjunction with this observation, we may conclude from the representation~\eqref{eq:A} that $A$ is of bounded type and hence admits a Nevanlinna factorization 
\begin{align*} 
    A(z) = C \prod_{n=1}^N\frac{z-\zeta(\kappa_n)}{z- \zeta(\kappa_n)^\ast} \exp\biggl\{ - \I \gamma_\infty z + \frac{\I\gamma_0}{z} + \frac{1}{\pi\I} \int_\R \biggl(\frac{1}{t-z}-\frac{t}{1+t^2}\biggr)\log K(t) dt\biggr\}
   \end{align*}
   for all $ z\in\C_+$, where $C$ is a complex constant with modulus one, $\gamma_\infty$ is given by
   \begin{align*}
     \gamma_\infty & = \limsup_{y\rightarrow\infty} \frac{1}{y} \log|A(\I y)| = \limsup_{y\rightarrow\infty} \frac{|\zeta^{-1}(\I y)|}{y} \frac{\log|a(\zeta^{-1}(\I y))|}{|\zeta^{-1}(\I y)|}  \\
      & = \frac{1}{2} \int_0^\infty \rho(x) - \frac{1}{1+2x}\, dx
   \end{align*}
   in view of Lemma~\ref{lemaasymalpha}, $\gamma_0$ is similarly given by 
   \begin{align*}
       \gamma_0 & = \lim_{\varepsilon\rightarrow0} \varepsilon \log|A(\I\varepsilon)| = \lim_{\varepsilon\rightarrow0} \varepsilon|\zeta^{-1}(\I\varepsilon)| \frac{\log|a(\zeta^{-1}(\I \varepsilon))|}{|\zeta^{-1}(\I\varepsilon)|}  \\ 
      & = \frac{1}{2} \int_0^\infty \rho(x) - \frac{1}{1+2x}\, dx,
   \end{align*}
    and the function $K$ is given by (recall here that $|a|$ is continuous on all of $\C\backslash\{-1,1\}$)
\begin{align*}
  K(t) = \lim_{\varepsilon\rightarrow0} |A(t+\I\varepsilon)| =  \lim_{\varepsilon\rightarrow0} |a(\zeta^{-1}(t+\I\varepsilon))| = \biggl|a\biggl(\frac{1}{2}\biggl(t+\frac{1}{t}\biggr)\biggr)\biggr|
\end{align*}   
for almost all $t\in\R$.   
After a few substitutions, we find that   
\begin{align*}
 &\int_\R  \biggl(\frac{1}{t-z}-\frac{t}{1+t^2}\biggr)\log K(t)dt \\
  & \qquad = \int_{|t|<1}  \biggl(\frac{1}{t-z}-\frac{t}{1+t^2}\biggr)\log K(t)dt + \int_{|t|>1}  \biggl(\frac{1}{t-z}-\frac{t}{1+t^2}\biggr)\log K(t)dt \\
 & \qquad =  \int_{|t|>1}  \biggl(\frac{1}{1/t-z}-\frac{1/t}{1+1/t^2}\biggr)\log\,\biggl|a\biggl(\frac{1}{2}\biggl(\frac{1}{t}+t\biggr)\biggr)\biggr|  \frac{1}{t^2} dt\\
&\qquad\qquad + \int_{|t|>1}  \biggl(\frac{1}{t-z}-\frac{t}{1+t^2}\biggr)\log\,\biggl|a\biggl(\frac{1}{2}\biggl(t+\frac{1}{t}\biggr)\biggr)\biggr|dt \\
 & \qquad =  \int_{|t|>1} \frac{1-z^2}{(t-z)(1-zt)} \log\,\biggl|a\biggl(\frac{1}{2}\biggl(t+\frac{1}{t}\biggr)\biggr)\biggr| dt\\
& \qquad = \int_{|\lambda|>1} \frac{k(\zeta^{-1}(z))}{\lambda -\zeta^{-1}(z)} \frac{\log|a(\lambda)|}{k(\lambda)}d\lambda
\end{align*}
 for all $z\in\C_+$, which readily yields the claim.
   \end{proof}
   
Our factorization of the function $a$ now gives rise to the desired trace formulas.

\begin{lemma}\label{lem:TrFlaAlpha}
We have the identities 
\begin{align}\label{eqnTrAlpha1}
 -\sum_{n=1}^N \frac{\sqrt{1-\kappa_n^2}}{\kappa_n} + \frac{ 1}{\pi} \int_{|\lambda|>1} \frac{1}{\lambda^2} \frac{\log|a(\lambda)|}{k(\lambda)} d\lambda
  & = \int_{0}^{\infty}\Wr(x) dx,\\
  \frac{1}{2}\sum_{n=1}^N \log\frac{1+\sqrt{1-\kappa_n^2}}{1-\sqrt{1-\kappa_n^2}}  - \frac{1}{\pi} \int_{|\lambda|>1} \frac{1}{\lambda} \frac{\log|a(\lambda)|}{k(\lambda)} d\lambda & =  \int_0^\infty \rho(x) - \frac{1}{1+2x}\, dx,\label{eqnTrAlpha1B}
\end{align}
and
\begin{align}\label{eqnTrAlpha2}
\begin{split}
& \sum_{n=1}^N \frac{\sqrt{1-\kappa_n^2}}{\kappa_n^2}  + \frac{1}{2}\sum_{n=1}^N \log\frac{1-\sqrt{1-\kappa_n^2}}{1+\sqrt{1-\kappa_n^2}} 
+ \frac{2}{\pi } \int_{|\lambda|>1}  \frac{\lambda^2-1}{\lambda^3}  \frac{\log|a(\lambda)|}{k(\lambda)} d\lambda \\
& \qquad =  \int_0^\infty |\Wr(x)|^2 (1+2x) dx+\int_0^\infty \Bigl|\rho(x) - \frac{1}{1+2x}\Bigr|^2 (1+2 x) dx  \\
&\qquad\qquad + \int_{[0,\infty)} (1+2 x) d \dip_\sing(x).
\end{split}
\end{align}
\end{lemma}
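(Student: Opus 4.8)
The plan is to imitate the proof of Lemma~\ref{lemTF0}: start from the Nevanlinna factorization of $a$ established in Lemma~\ref{lem:NevFactA}, take the logarithmic derivative, and then compare the resulting expansion of $\dot a(z)/a(z)$ near suitable points with the Taylor coefficients of $a$ computed directly from the representation \eqref{eq:a} and the formulas in Proposition~\ref{propFS}. The key new feature here is that the natural spectral variable is not $z$ but $k(z)=\sqrt{z^2-1}$; near a point of interest on the real line the three identities should come from matching the expansions at $z=\pm1$ (where $k$ vanishes and the logarithmic derivative in $k$ is singular) as well as at $z=0$ (where $k(0)=\I$ and one reads off the quantities $\int\Wr$, $\int(\rho-\tfrac1{1+2x})$ and the quadratic expression). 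More concretely, I would first record the relevant low-order Taylor data of $a$: from \eqref{eq:a}, \eqref{eqncsatzero} and Proposition~\ref{propFS} one gets $a(0)$, $\dot a(0)$ and $\ddot a(0)$ expressed through $\int_0^R\Wr$, $\int_0^R\Wr^2(1+2x)\,dx$, $\int_{[0,R)}(1+2x)\,d\dip$ and the boundary term $(1+2R)$ coming from the Jost normalization \eqref{eqnJostalpha}; rewriting $\int_0^R\Wr^2(1+2x)$ and $\int_{[0,R)}(1+2x)\,d\dip$ in terms of the regularized quantities $\int_0^\infty\Wr^2(1+2x)$, $\int_0^\infty|\rho-\tfrac1{1+2x}|^2(1+2x)$ and $\int_{[0,\infty)}(1+2x)\,d\dip_\sing$ (using that $\rho-\tfrac1{1+2x}$ vanishes on $[R,\infty)$) converts the boundary term into the right-hand side of \eqref{eqnTrAlpha2}.

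On the other side, from \eqref{eqnNevan1} I would compute $\frac{d}{dz}\log a(z)$, which produces three contributions: the Blaschke-type sum $\sum_n\bigl(\frac{\zeta'(z)}{\zeta(z)-\zeta(\kappa_n)}-\frac{\zeta'(z)}{\zeta(z)-\zeta(\kappa_n)^\ast}\bigr)$, the linear term $-\I\beta k'(z)$, and the Cauchy integral term $\frac1{\pi\I}\int_{|\lambda|>1}\frac{d}{dz}\bigl(\frac{k(z)}{\lambda-z}\bigr)\frac{\log|a(\lambda)|}{k(\lambda)}\,d\lambda$. The three trace formulas then arise by extracting, respectively, the residue/leading singular behaviour as $z\to1$ and as $z\to-1$ (using $k(z)\sim\sqrt{2}\sqrt{z-1}$, $\kappa_n\in(-1,1)$ so $k(\kappa_n)=\I\sqrt{1-\kappa_n^2}$, $\zeta(\kappa_n)=\kappa_n+\I\sqrt{1-\kappa_n^2}$, hence $|\zeta(\kappa_n)|=1$), and the value and first Taylor coefficient at $z=0$ (using $k(0)=\I$, $k'(0)=0$, $k''(0)=-\I$). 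Matching $a(0)=1$ gives nothing; matching $\dot a(0)/a(0)$ against the $z=0$ expansion of the logarithmic derivative yields \eqref{eqnTrAlpha1}, the combination of $\beta$ from \eqref{eq:beta1} with Lemma~\ref{lemaasymalpha} yields \eqref{eqnTrAlpha1B} after identifying $\beta$ with the appropriate sum of a Blaschke contribution $\tfrac12\sum_n\log\frac{1+\sqrt{1-\kappa_n^2}}{1-\sqrt{1-\kappa_n^2}}$ and the integral, and the second-order matching at $z=0$ (together with $\ddot a(0)$ above) yields \eqref{eqnTrAlpha2}. As in Lemma~\ref{lemTF0}, one needs the bound $\log|a(\lambda)|=\OO((\lambda\mp1)^{?})$ near $\lambda=\pm1$ and $\log|a(\lambda)|=\OO(\lambda^{-2})$-type decay (from the factorization and \eqref{eqnlogazero}-type behaviour, i.e.\ $a$ analytic and nonvanishing at $0$) to guarantee that all limits of the integrals exist and that differentiation under the integral sign is legitimate.

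The main obstacle I expect is the bookkeeping at the branch points $\lambda=\pm1$: one must show the integrals $\int_{|\lambda|>1}\frac{\log|a(\lambda)|}{k(\lambda)}\,d\lambda$ and their weighted versions converge, which requires controlling $\log|a(\lambda)|$ as $\lambda\to\pm1$; here the explicit structure of $a$ near $\pm1$ (from \eqref{eq:a}, where $\theta,\theta^\qd,\phi,\phi^\qd$ at $R$ are entire and the only singular factor is $k(z)^{-1}$, while $|a|$ is continuous across $\R$ as noted after \eqref{eq:b}) shows $|a(\lambda)|$ stays bounded and bounded away from zero near $\pm1$, so $\log|a(\lambda)|=\OO(1)$ and $k(\lambda)^{-1}$ is integrable there. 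A secondary bit of care is needed because the factorization \eqref{eqnNevan1} is stated on $\C_+\cup(-1,1)\cup\C_-$ but the expansion point $z=0$ lies in $(-1,1)$, so one should either work on a small disc around $0$ inside this domain or take the limit $z\to0$ along the imaginary axis as in Lemma~\ref{lemTF0}; the symmetry \eqref{eqnksym} ensures the relevant one-sided limits agree. Once these analytic points are in place, the derivation of \eqref{eqnTrAlpha1}, \eqref{eqnTrAlpha1B} and \eqref{eqnTrAlpha2} is a matter of careful but routine Taylor expansion, exactly parallel to the computation \eqref{eqndddazero} in the proof of Lemma~\ref{lemTF0}.
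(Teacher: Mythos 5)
Your proposal follows essentially the same route as the paper: compute $a(0)=1$, $\dot a(0)$ and $\ddot a(0)$ from \eqref{eq:a} together with Proposition~\ref{propFS}, and match these against the value and the first two derivatives of the logarithm of the factorization \eqref{eqnNevan1} at $z=0$ (the detour you sketch via the branch points $z=\pm1$ is not needed, and you do not actually use it). One correction: your remark that ``matching $a(0)=1$ gives nothing'' is off --- evaluating \eqref{eqnNevan1} at $z=0$ and taking moduli (using $k(0)=\I$, $\zeta(0)=\I$ and $|\zeta(\kappa_n)|=1$) is precisely what identifies $\beta$ with the Blaschke sum minus the integral, i.e.\ it is the source of \eqref{eqnTrAlpha1B}, and without it that identity is left unproved. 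Everything else is consistent with the paper's argument, though note that $|a|$ need not stay bounded near $\pm1$ because of the factor $k(z)^{-1}$ in \eqref{eq:a_alpha}; what is actually needed is only the integrability of $\log|a(\lambda)|/k(\lambda)$, which is already implicit in Lemma~\ref{lem:NevFactA}.
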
   
  
\begin{proof} 
Evaluating~\eqref{eq:a} at zero and noting that $k(0) = \I$, we first see that  $a(0)=1$.
After differentiating~\eqref{eq:a}, evaluating at zero, noting that $\dot{k}(0) = 0$ and employing the formulas in Proposition~\ref{propFS}, we get that 
\begin{align}\label{eq:dota0}
\dot{a}(0) = \int_{0}^{\infty}\Wr(x) dx.
\end{align}
Differentiating~\eqref{eq:a} once more, evaluating at zero, noting that $\ddot{k}(0) = -\I$ and using the formulas in Proposition~\ref{propFS} again finally gives   
\begin{align}\label{eq:ddota0}
\begin{split}
\ddot{a}(0) & = \biggl(\int_0^\infty \Wr(x)dx\biggr)^2 -  \int_0^\infty |\Wr(x)|^2 (1+2 x) dx \\
& \qquad - \int_{0}^{R} \rho(x)^2(1+2 x) dx + \frac{\log(1+2R)}{2} - \int_{[0,\infty)} (1+2x) d\dip_\sing(x).
\end{split}
\end{align}

On the other side, we evaluate~\eqref{eqnNevan1} at zero and take absolute values to obtain
\begin{align*}
\beta = \frac{1}{2}\sum_{n=1}^N \log\frac{1+\sqrt{1-\kappa_n^2}}{1-\sqrt{1-\kappa_n^2}}  - \frac{1}{\pi} \int_{|\lambda|>1} \frac{1}{\lambda} \frac{\log|a(\lambda)|}{k(\lambda)} d\lambda,
\end{align*}
which yields identity~\eqref{eqnTrAlpha1B} in view of~\eqref{eq:beta1}.
Using~\eqref{eqnNevan1} to compute the logarithmic derivative of $a$, we arrive at
\begin{align}\begin{split}\label{eqnLogDeralpha}
  \frac{\dot{a}(z)}{a(z)} & = \sum_{n=1}^N \frac{\dot{\zeta}(z)}{\zeta(z)} \frac{\I\sqrt{1-\kappa_n^2}}{z-\kappa_n} -  \I\beta \dot{k}(z) + \frac{1}{\pi\I} \int_{|\lambda|>1} \frac{\dot{k}(z)}{\lambda-z} \frac{\log|a(\lambda)|}{k(\lambda)}d\lambda \\
   & \qquad + \frac{1}{\pi\I} \int_{|\lambda|>1} \frac{k(z)}{(\lambda-z)^2} \frac{\log|a(\lambda)|}{k(\lambda)} d\lambda
\end{split}\end{align}
for all $z$ close enough to zero (so that $a(z)$ is non-zero). 
Evaluating at zero gives
\begin{align*}
  \dot{a}(0) = -\sum_{n=1}^N \frac{\sqrt{1-\kappa_n^2}}{\kappa_n} + \frac{ 1}{\pi} \int_{|\lambda|>1} \frac{1}{\lambda^2} \frac{\log|a(\lambda)|}{k(\lambda)} d\lambda
\end{align*}
and thus the  first trace formula \eqref{eqnTrAlpha1} upon taking into account~\eqref{eq:dota0}.
By differentiating~\eqref{eqnLogDeralpha} and then evaluating at zero again, we get 
\begin{align*}
\ddot{a}(0) - \dot{a}(0)^2  & = -\sum_{n=1}^N \frac{\sqrt{1-\kappa_n^2}}{\kappa_n^2} -\beta +  \frac{1}{\pi} \int_{|\lambda|>1} \frac{2-\lambda^2}{\lambda^3} \frac{\log|a(\lambda)|}{k(\lambda)} d\lambda.
\end{align*}
After adding $2\beta$ on both sides and using formula~\eqref{eqnTrAlpha1B} to replace $\beta$ on the right-hand side, we end up with 
\begin{align*}
\ddot{a}(0) - \dot{a}(0)^2  + 2\beta & = -\sum_{n=1}^N \frac{\sqrt{1-\kappa_n^2}}{\kappa_n^2} +\frac{1}{2} \sum_{n=1}^N  \log\frac{1+\sqrt{1-\kappa_n^2}}{1-\sqrt{1-\kappa_n^2}} \\
 & \qquad +  \frac{2}{\pi} \int_{|\lambda|>1} \frac{1-\lambda^2}{\lambda^3} \frac{\log|a(\lambda)|}{k(\lambda)} d\lambda.
\end{align*}
In order to obtain the last trace formula~\eqref{eqnTrAlpha2}, it now remains to note that the left-hand side is equal to minus the right-hand side of~\eqref{eqnTrAlpha2} upon taking into account~\eqref{eq:ddota0}, \eqref{eq:dota0} and~\eqref{eq:beta1}. 
\end{proof}

We note that the integral term on the left-hand side of the identity~\eqref{eqnTrAlpha2} is non-negative in view of~\eqref{eq:a+b} and~\eqref{eqnkonR}. 
 In particular, this observation will allow us to obtain a Lieb--Thirring-type estimate on the eigenvalues of the corresponding self-adjoint realization in the interval $(-1,1)$. 
 To this end, we first point out that the spectrum in $(-1,1)$ is purely discrete since~\eqref{eqnmasfalpha} shows that the Weyl--Titchmarsh function $m$ allows a meromorphic extension to $\C_+\cup(-1,1)\cup\C_-$.
 Moreover, there are only finitely many eigenvalues in $(-1,1)$ as they are precisely the zeros of the function $f(\ledot,0)$. 
 Because zero is certainly not an eigenvalue (since $L$ is infinite), we may enumerate all eigenvalues in $(-1,1)$ in the following way:
\begin{align}
-1<\lambda_{K_-}^- <\dots < \lambda_1^-<0<\lambda_1^+ < \dots<\lambda^+_{K_+}<1.
\end{align}

  \begin{corollary}\label{corEVestAlpha}
   We have the estimate 
  \begin{align}
\begin{split}\label{eqnLTalpha}
 & \frac{2}{3}\sum_{i=1}^{K_-} \big(1-|\lambda_i^-|^{2}\big)^{3/2} + \frac{2}{3}\sum_{i=1}^{K_+} \big(1-|\lambda_i^+|^2\big)^{3/2} \\  
 & \qquad \leq  \int_0^\infty |\Wr(x)|^2 (1+2x)dx   +\int_0^\infty \Bigl|\rho(x) - \frac{1}{1+2x}\Bigr|^2 (1+2 x) dx \\
 & \qquad\qquad + \int_{[0,\infty)} (1+2 x) d \dip_\sing(x).
\end{split}  \end{align}
 \end{corollary}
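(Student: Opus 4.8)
The plan is to read off~\eqref{eqnLTalpha} from the trace formula~\eqref{eqnTrAlpha2}, discarding a non-negative term and then reinterpreting the resulting sum over the zeros $\kappa_1,\dots,\kappa_N$ of $a$ in terms of the eigenvalues $\lambda_i^\pm$. First I would record that the integral term on the left-hand side of~\eqref{eqnTrAlpha2} is non-negative: for $|\lambda|>1$ one has $|a(\lambda)|\ge1$ by~\eqref{eq:a+b}, hence $\log|a(\lambda)|\ge0$, whereas~\eqref{eqnkonR} shows that $k(\lambda)$ has the same sign as $\lambda$ there, so the integrand $\frac{\lambda^2-1}{\lambda^3}\frac{\log|a(\lambda)|}{k(\lambda)}$ is non-negative. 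Dropping this term from~\eqref{eqnTrAlpha2} leaves
\begin{align*}
 \sum_{n=1}^N\biggl(\frac{\sqrt{1-\kappa_n^2}}{\kappa_n^2}+\frac{1}{2}\log\frac{1-\sqrt{1-\kappa_n^2}}{1+\sqrt{1-\kappa_n^2}}\biggr)\le\text{(right-hand side of~\eqref{eqnLTalpha})}.
\end{align*}

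Next I would bound each summand from below by an elementary inequality. Writing $s=\sqrt{1-\kappa_n^2}\in(0,1)$, the $n$-th summand equals $\frac{s}{1-s^2}-\frac{1}{2}\log\frac{1+s}{1-s}$, and comparing the series $\frac{s}{1-s^2}=\sum_{j\ge1}s^{2j-1}$ with $\frac{1}{2}\log\frac{1+s}{1-s}=\sum_{j\ge1}\frac{s^{2j-1}}{2j-1}$ gives
\begin{align*}
 \frac{s}{1-s^2}-\frac{1}{2}\log\frac{1+s}{1-s}=\sum_{j\ge2}\Bigl(1-\frac{1}{2j-1}\Bigr)s^{2j-1}\ge\frac{2}{3}s^{3}=\frac{2}{3}\bigl(1-\kappa_n^2\bigr)^{3/2}.
\end{align*}
Thus $\frac{2}{3}\sum_{n=1}^N(1-\kappa_n^2)^{3/2}$ is bounded by the right-hand side of~\eqref{eqnLTalpha}, and it remains to verify the interlacing estimate
\begin{align*}
 \sum_{n=1}^N\bigl(1-\kappa_n^2\bigr)^{3/2}\ \ge\ \sum_{i=1}^{K_+}\bigl(1-|\lambda_i^+|^2\bigr)^{3/2}+\sum_{i=1}^{K_-}\bigl(1-|\lambda_i^-|^2\bigr)^{3/2}.
\end{align*}

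For this I would exploit that the $\kappa_n$ are precisely the solutions in $(-1,1)$ of $m(z)=\frac{1-\I k(z)}{z}=\frac{1}{m_1(z)}$, together with the Herglotz property of $m$. On $(0,1)$ the Weyl--Titchmarsh function $m$ is meromorphic with poles exactly at $\lambda_1^+<\dots<\lambda_{K_+}^+$, and it increases strictly from $-\infty$ to $+\infty$ on each interval $(\lambda_i^+,\lambda_{i+1}^+)$ and from the finite value $m(0)$ (finite because $0$ is not an eigenvalue and $m$ is analytic on $(-1,1)$ away from the eigenvalues) to $+\infty$ on $(0,\lambda_1^+)$. Since $1/m_1$ is continuous, strictly decreasing on $(0,1)$ and tends to $+\infty$ as $z\to0^+$, the difference $m-1/m_1$ is strictly increasing and changes sign on each of the intervals $(\lambda_{i-1}^+,\lambda_i^+)$, $i=1,\dots,K_+$ (with $\lambda_0^+:=0$), so each of them contains one of the zeros $\kappa_n$, which then obeys $|\kappa_n|<|\lambda_i^+|$. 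Arguing in the same way on $(-1,0)$ associates to every negative eigenvalue a zero of $a$ of strictly smaller modulus, and since the intervals involved are pairwise disjoint, distinct eigenvalues give rise to distinct zeros. As $\kappa\mapsto(1-\kappa^2)^{3/2}$ is decreasing in $|\kappa|$, the interlacing estimate follows, and combining it with the previous steps proves~\eqref{eqnLTalpha}. I expect the interlacing argument---matching every eigenvalue to a strictly closer-to-the-origin zero of $a$---to be the only delicate point, the remaining ingredients being a sign observation and a power-series comparison.
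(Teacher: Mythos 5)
Your proposal is correct and follows essentially the same route as the paper: drop the non-negative integral term in the trace formula~\eqref{eqnTrAlpha2}, bound each summand below by $\tfrac{2}{3}(1-\kappa_n^2)^{3/2}$ (the paper integrates $G'(s)=2s^2/(1-s^2)^2$ where you compare power series --- same inequality), and match every eigenvalue to a zero of $a$ of strictly smaller modulus by the Herglotz/monotonicity interlacing argument for equation~\eqref{eqnzerosofaalpha}. The only differences are cosmetic (order of the estimates and the proof of the elementary inequality).
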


\begin{proof}
Since the left-hand side of~\eqref{eqnzerosofaalpha} is a Herglotz--Nevanlinna function and the right-hand side is strictly decreasing on $(-1,0)$ and on $(0,1)$ with a pole at zero, we may find a solution of equation~\eqref{eqnzerosofaalpha} between any two eigenvalues (poles of the left-hand side) as well as between any eigenvalue and zero. 
As the solutions of this equation are precisely  the zeros $\kappa_1,\ldots,\kappa_N$ of the function $a$, we may conclude that 
\begin{align*}
 \lambda_{K_-}^- < \kappa_{n_-(K_-)} < \lambda_{K_--1}^- < \cdots < \lambda_{1}^- < \kappa_{n_-(1)} < 0
\end{align*}
as well as
\begin{align*}
0 < \kappa_{n_+(1)} <  \lambda_{1}^+ < \cdots < \lambda_{K_+-1}^+ < \kappa_{n_+(K_+)} < \lambda_{K_+}^+
\end{align*}
for some indices $n_-(K_-),\ldots,n_-(1),n_+(1),\ldots,n_+(K_+)\in\{1,\ldots,N\}$. 

Now let us consider the function $G$ defined by 
  \begin{align}\label{eq:funtrace}
    G(s) =   \frac{s}{1-s^2} + \frac{1}{2} \log \left(\frac{1 - s}{1 + s}\right), \quad s\in[0,1),
 \end{align}
 and first notice that $G$ is strictly increasing on $[0,1)$ and positive on $(0,1)$ since we may compute 
\begin{align*}
 G'(s) = \frac{1+s^2}{(1-s^2)^2} -\frac{1}{1-s^2} = \frac{2s^2}{(1-s^2)^2}, \quad s\in(0,1). 
\end{align*}
Moreover, the function $G$ satisfies the bound 
\begin{align*}
G(s) = \int_0^s G'(r) dr = \int_0^s \frac{2r^2}{(1-r^2)^2}dr \geq \int_0^s 2r^2dr = \frac{2}{3}s^3, \quad s\in(0,1). 
\end{align*}
By combining all these facts, we can estimate
\begin{align*}
  \frac{2}{3} \big(1-|\lambda_i^\pm|^{2}\big)^{3/2} \leq G\Bigl(\sqrt{1 - |\lambda_i^\pm|^2}\Bigr) < G\Bigl(\sqrt{1 - \kappa_{n_\pm(i)}^2}\Bigr)
\end{align*}
for all $i\in\{1,\ldots,K_\pm\}$. 
This allows us to bound the left-hand side of~\eqref{eqnLTalpha} by 
\begin{align*}
  \sum_{i=1}^{K_-} G\Bigl(\sqrt{1 - \kappa_{n_-(i)}^2}\Bigr) +  \sum_{i=1}^{K_+} G\Bigl(\sqrt{1 - \kappa_{n_+(i)}^2}\Bigr) \leq  \sum_{n=1}^{N} G\bigl(\sqrt{1 - \kappa_{n}^2}\bigr).
\end{align*}
It is readily seen that this sum coincides with the first two terms in~\eqref{eqnTrAlpha2}, which yields the claim as the integral term on the left-hand side there is non-negative. 
\end{proof}

We are now going to use the identity~\eqref{eqnTrAlpha2} to estimate the absolutely continuous spectrum of  $(L,\omega,\dip)$. 
 To this end, we first note that we have  
\begin{align}\label{eq:m=ab2}
m(z) = \frac{1}{z} + \frac{\I k(z)}{z}\frac{a(z) - b(z)}{a(z)+b(z)}, \quad z\in\C\backslash\R. 
\end{align} 
 Since the functions $k$, $a$ and $b$ are continuous on $\C_+\cup(-\infty,-1)\cup(1,\infty)$ and satisfy~\eqref{eq:a+b} for real $\lambda$ with $|\lambda|> 1$, 
 one can conclude that the spectrum of $(L,\omega,\dip)$ on the set $(-\infty,-1)\cup(1,\infty)$ is purely absolutely continuous with the corresponding spectral measure $\mu$ given by 
  \begin{align}
    \mu(B) = \int_B \varrho(\lambda) d\lambda
  \end{align}
  for every Borel set $B\subseteq(-\infty,-1)\cup(1,\infty)$, where $\varrho$ is defined by 
  \begin{align}\label{eqnrhoabalpha}
    \varrho(\lambda) = \lim_{\varepsilon\rightarrow0} \frac{1}{\pi}\im\, m(\lambda+\I\varepsilon) 
    =  \frac{k(\lambda)}{\pi\lambda|a(\lambda)+b(\lambda)|^2}, \quad \lambda\in(-\infty,-1)\cup(1,\infty). 
   \end{align} 
   We note that the function $\varrho$ is continuous and positive on $(-\infty,-1)\cup(1,\infty)$.
   
 \begin{corollary}\label{corACest2}
For every compact subset $\Omega$ of $(-\infty,-1)\cup(1,\infty)$, we have the estimate 
  \begin{align}\label{eqnTFest02}
  \begin{split}
     &  - \frac{1}{\pi} \int_\Omega \log\biggl(\varrho(\lambda)\frac{4\pi\lambda^3}{k(\lambda)}\biggl) \frac{k(\lambda)}{\lambda^3} d\lambda \\
     & \qquad  \leq  \int_0^\infty |\Wr(x)|^2 (1+2x) dx +\int_0^\infty \Bigl|\rho(x) - \frac{1}{1+2x}\Bigr|^2 (1+2 x)dx \\
       & \qquad\qquad  + \int_{[0,\infty)} (1+2 x) d \dip_\sing(x).
       \end{split}
  \end{align}
 \end{corollary}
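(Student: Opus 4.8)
The plan is to reproduce the argument used for Corollary~\ref{corACest}, now based on the representation~\eqref{eq:m=ab2} of the Weyl--Titchmarsh function and on the trace formula~\eqref{eqnTrAlpha2} in place of~\eqref{eqnTF0}. The two new features to keep track of are the additional factor $\I k(z)/z$ appearing in~\eqref{eq:m=ab2} and the circumstance that $\Omega$ may meet both $(-\infty,-1)$ and $(1,\infty)$, on which $k$ and $\lambda$ carry opposite signs; apart from that the manipulations are the same.

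First I would establish a pointwise bound on $\Omega$. On the one hand, for $|\lambda|>1$ one has, exactly as before,
\begin{align*}
\biggl|1+\frac{a(\lambda)-b(\lambda)}{a(\lambda)+b(\lambda)}\biggr|^2
 = \frac{4|a(\lambda)|^2}{|a(\lambda)+b(\lambda)|^2}
 = \frac{4\pi\lambda\,|a(\lambda)|^2\,\varrho(\lambda)}{k(\lambda)},
\end{align*}
where the last step uses the expression~\eqref{eqnrhoabalpha} for $\varrho$ and where the right-hand side is positive because $k(\lambda)/\lambda>0$ on $(-\infty,-1)\cup(1,\infty)$. On the other hand, taking real parts and invoking~\eqref{eq:a+b} yields
\begin{align*}
\biggl|1+\frac{a(\lambda)-b(\lambda)}{a(\lambda)+b(\lambda)}\biggr|
 \ge \re\biggl(1+\frac{a(\lambda)-b(\lambda)}{a(\lambda)+b(\lambda)}\biggr)
 = 1+\frac{1}{|a(\lambda)+b(\lambda)|^2}\ge 1.
\end{align*}
Combining the two displays gives $|a(\lambda)|^{-2}\le 4\pi\lambda\varrho(\lambda)/k(\lambda)$ for $|\lambda|>1$; since $\lambda^2>1$ there and the quantity $\lambda/k(\lambda)$ is positive, this improves to $|a(\lambda)|^{-2}\le\varrho(\lambda)\,4\pi\lambda^3/k(\lambda)$ on $\Omega$, hence
\begin{align*}
-\log\biggl(\varrho(\lambda)\frac{4\pi\lambda^3}{k(\lambda)}\biggr)\le \log|a(\lambda)|^2, \quad \lambda\in\Omega.
\end{align*}

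Next I would integrate this inequality against the weight $k(\lambda)/\lambda^3\,d\lambda$, which is non-negative on $(-\infty,-1)\cup(1,\infty)$, to obtain
\begin{align*}
-\frac{1}{\pi}\int_\Omega \log\biggl(\varrho(\lambda)\frac{4\pi\lambda^3}{k(\lambda)}\biggr)\frac{k(\lambda)}{\lambda^3}\,d\lambda
 \le \frac{2}{\pi}\int_\Omega \log|a(\lambda)|\,\frac{k(\lambda)}{\lambda^3}\,d\lambda.
\end{align*}
Because $|a(\lambda)|\ge 1$ on $(-\infty,-1)\cup(1,\infty)$ by~\eqref{eq:a+b} and the weight is non-negative, replacing $\Omega$ by the full set $\{|\lambda|>1\}$ only increases the right-hand side; using $k(\lambda)/\lambda^3=(\lambda^2-1)/(\lambda^3 k(\lambda))$ it becomes $\frac{2}{\pi}\int_{|\lambda|>1}\frac{\lambda^2-1}{\lambda^3}\frac{\log|a(\lambda)|}{k(\lambda)}\,d\lambda$. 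By the trace formula~\eqref{eqnTrAlpha2} this integral equals the right-hand side of~\eqref{eqnTFest02} minus the two eigenvalue sums occurring in~\eqref{eqnTrAlpha2}; but their sum is precisely $\sum_{n=1}^N G\bigl(\sqrt{1-\kappa_n^2}\bigr)$ with $G$ as in~\eqref{eq:funtrace}, and this is non-negative since $G\ge 0$ on $[0,1)$ and $\kappa_n\ne0$. Discarding this non-negative term yields exactly the claimed estimate.

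The argument is short, and the only place that needs genuine care is the sign bookkeeping: one must check that $k(\lambda)$, $\lambda$, $\lambda^3$ and the weight $k(\lambda)/\lambda^3$ have the expected signs separately on $(-\infty,-1)$ and on $(1,\infty)$, so that the passage to logarithms, the enlargement of $\Omega$ to $\{|\lambda|>1\}$, and the omission of the eigenvalue sums are all legitimate. Everything else is a verbatim repetition of the reasoning in the proof of Corollary~\ref{corACest}.
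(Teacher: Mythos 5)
Your proposal is correct and follows essentially the same route as the paper: the same pointwise bound $|a(\lambda)|^{-2}\le 4\pi\lambda^3\varrho(\lambda)/k(\lambda)$ obtained from \eqref{eq:m=ab2}, \eqref{eqnrhoabalpha} and \eqref{eq:a+b}, followed by integration against the non-negative weight $k(\lambda)/\lambda^3$, enlargement of $\Omega$ to $\{|\lambda|>1\}$ using $|a|\ge 1$, and an appeal to the trace formula \eqref{eqnTrAlpha2} after discarding the non-negative eigenvalue sums $\sum_n G\bigl(\sqrt{1-\kappa_n^2}\bigr)$. The only difference is that you spell out the sign bookkeeping and the identity $k(\lambda)/\lambda^3=(\lambda^2-1)/(\lambda^3 k(\lambda))$ more explicitly than the paper does, which is harmless.
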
 
 
 \begin{proof}
   For every $\lambda\in (-\infty,-1)\cup(1,\infty)$, we first compute that 
     \begin{align*}
       \left| 1+ \frac{a(\lambda)-b(\lambda)}{a(\lambda)+b(\lambda)}\right|^2 =  \frac{4 |a(\lambda)|^2}{|a(\lambda)+b(\lambda)|^2} 
       =  4\pi \frac{\lambda}{k(\lambda)}\varrho(\lambda) |a(\lambda)|^2
  \end{align*}
  and on the other side that 
       \begin{align*}
       \left| 1+ \frac{a(\lambda)-b(\lambda)}{a(\lambda)+b(\lambda)}\right|  \geq   
       \re\left( 1+ \frac{a(\lambda)-b(\lambda)}{a(\lambda)+b(\lambda)}\right) = 1+\frac{1}{|a(\lambda)+b(\lambda)|^2} \geq 1.
  \end{align*}
  In combination, this gives the bound 
  \begin{align*}
    \frac{1}{|a(\lambda)|^2}  \leq  \frac{4\pi\lambda}{k(\lambda)}\varrho(\lambda) 
       \leq   \frac{4\pi\lambda^3}{k(\lambda)}\varrho(\lambda),  \end{align*}  
  which allows us to estimate the integral  
  \begin{align*}
    - \frac{1}{\pi} \int_\Omega \log\biggl(\varrho(\lambda)\frac{4\pi\lambda^3}{k(\lambda)}\biggl) \frac{k(\lambda)}{\lambda^3} d\lambda & \leq \frac{1}{\pi} \int_{\Omega} \log|a(\lambda)|^2 \frac{k(\lambda)}{\lambda^3}  d\lambda\\
     & \leq \frac{2}{\pi} \int_{|\lambda|>1} \frac{\lambda^2-1}{\lambda^3} \frac{\log|a(\lambda)|}{k(\lambda)} d\lambda.
   \end{align*}
 Now it remains to notice that the sum of the first two terms in~\eqref{eqnTrAlpha2} is non-negative since the function $G$ defined in~\eqref{eq:funtrace} takes positive values.   
 \end{proof}

 We are now ready to prove our second main result.
     
\begin{proof}[Proof of Theorem~\ref{thmalpha}]
   Let us assume for now that $S$ is a generalized indefinite string $(L,\omega,\dip)$ such that $L$ is infinite and 
    \begin{align*}
      \int_0^\infty | \Wr(x)|^2 x\, dx   +\int_0^\infty   \Bigl| \rho(x) - \frac{1}{1+2x} \Bigr|^2 x\, dx + \int_{[0,\infty)}  x\, d\dip_\sing(x) < \infty,
    \end{align*}
    where $\Wr$ is the normalized anti-derivative of $\omega$, $\rho$ is the square root of the Radon--Nikod\'ym derivative of $\dip$ and $\dip_\sing$ is the singular part of $\dip$. 
  We are first going to construct a suitable approximating sequence of generalized indefinite strings $(L_n,\omega_n,\dip_n)$ from the set $\mathcal{F}$. 
  For every $n\in\N$, let $L_n$ be infinite and choose $R_n>n$ such that 
  \begin{align*}
    \int_{R_n}^\infty |\Wr(x)|^2 x\, dx +  \int_{R_n}^\infty   \Bigl| \rho(x) - \frac{1}{1+2x} \Bigr|^2 x\, dx< \frac{1}{n}.
  \end{align*}
   We can then find a real-valued, piecewise constant function $\Wr_n$ on $[0,\infty)$ such that  
  \begin{align*}
   \int_0^{R_n} |\Wr_n(x)-\Wr(x)|^2 dx & < \frac{1}{n R_n}
  \end{align*}
  and such that $\Wr_n$ is equal to zero to the right of $R_n-\varepsilon_n$ for some $\varepsilon_n>0$. 
  The distribution $\omega_n$ is now defined in such a way that the corresponding normalized anti-derivative coincides with $\Wr_n$ almost everywhere. 
  We can also find a non-negative function $\rho_n$ on $[0,\infty)$ which is piecewise constant on the interval $[0,R_n]$ with 
  \begin{align*}
   \int_0^{R_n} |\rho_n(x)-\rho(x)|^2 dx & < \frac{1}{n R_n}
  \end{align*}
  and which is given explicitly by 
  \begin{align*}
    \rho_n(x) & = \frac{1}{1+2x}
   \end{align*}
   for all $x>R_n$.    
   Moreover, it is possible to choose $\rho_n$ such that it is positive in a neighborhood of $R_n$. 
  Apart from this, we are able to find a non-negative Borel measure $\dip_{n,\sing}$ which is supported on a finite set contained in $[0,R_n)$ with 
  \begin{align*}
   \int_{[0,\infty)} d\dip_{n,\sing} & = \int_{[0,\infty)} d\dip_\sing, & \int_{[0,\infty)} x\, d\dip_{n,\sing}(x) & \leq \int_{[0,\infty)} x\, d\dip_\sing(x),
  \end{align*}
  and such that for almost every $x\in[0,\infty)$ we have 
  \begin{align*}
    \int_{[0,x)} d\dip_{n,\sing} \rightarrow \int_{[0,x)} d\dip_\sing, \qquad n\rightarrow \infty.
  \end{align*} 
  The measure $\dip_n$ is then defined by 
  \begin{align*}
    \int_B d\dip_n = \int_B \rho_n(x)^2 dx + \int_B d\dip_{n,\sing}
  \end{align*}
  for every Borel set $B\subseteq[0,\infty)$. 
  Note that by construction, the generalized indefinite strings $(L_n,\omega_n,\dip_n)$ belong to the set $\mathcal{F}$ and the quantities 
  \begin{align}\label{eqnNconvalpha}
  \begin{split}
     & \int_0^\infty |\Wr_n(x)|^2 (1+2x) dx + \int_0^\infty   \Bigl| \rho_n(x) - \frac{1}{1+2x} \Bigr|^2 (1+2x) dx  \\
     & \qquad\qquad\qquad\qquad\qquad\qquad\qquad\qquad\qquad   + \int_{[0,\infty)} (1+2x) d\dip_{n,\sing}(x)
     \end{split}
    \end{align}
    are bounded by a positive constant $M$ for all $n\in\N$. 
 Furthermore, it follows readily from \cite[Proposition~6.2]{IndefiniteString} that the corresponding Weyl--Titchmarsh functions $m_n$ converge locally uniformly to $m$. 
  Thus the associated spectral measures $\mu_n$ certainly satisfy 
  \begin{align}\label{eqnmuconvalpha}
   \int_\R g(\lambda)d\mu_n(\lambda) \rightarrow \int_\R g(\lambda) d\mu(\lambda), \qquad n\rightarrow\infty,
  \end{align}
  for every continuous function $g$ on $\R$ with compact support. 

   In order to prove that the essential spectrum of $S$ is restricted to $(-\infty,-1]\cup[1,\infty)$, let $I$ be a compact interval in $(-1,1)$. 
   Because of the estimate in Corollary~\ref{corEVestAlpha} and boundedness of the quantities in~\eqref{eqnNconvalpha}, we see that there is an integer $K_I$ such that $(L_n,\omega_n,\dip_n)$ has at most $K_I$ eigenvalues in the interval $I$ for every $n\in\N$. 
   It now follows from the convergence of the measures $\mu_n$ in~\eqref{eqnmuconvalpha} that the limit measure $\mu$ is supported on a finite set on $I$, which implies that $S$ has at most finitely many eigenvalues in $I$.  
  Since the interval $I$ was arbitrary, we conclude that the essential spectrum of $S$ is necessarily contained in  $(-\infty,-1]\cup[1,\infty)$. 
  
  Now take a compact set $\Omega\subset(-\infty,-1)\cup(1,\infty)$ of positive Lebesgue measure.
   Due to  the convergence of the measures $\mu_n$ in~\eqref{eqnmuconvalpha} we have (see \cite[Theorem~30.2]{ba01})
  \begin{align*}
   \mu(\Omega) \geq \limsup_{n\rightarrow\infty} \mu_n(\Omega) = \limsup_{n\rightarrow\infty} \int_\Omega \varrho_n(\lambda)d\lambda,
  \end{align*}
  where the functions $\varrho_n$ are given as in~\eqref{eqnrhoabalpha}. 
 An application of Jensen's inequality \cite[Theorem~3.3]{ru74} then furthermore yields
 \begin{align*}
  \mu(\Omega) & \geq \limsup_{n\rightarrow\infty} D_\Omega \exp\biggl\{\frac{1}{4\pi D_\Omega} \int_\Omega \log\biggl(\varrho_n(\lambda)\frac{4\pi\lambda^3}{k(\lambda)}\biggr) \frac{k(\lambda)}{\lambda^3}  d\lambda\biggr\}, 
 \end{align*}
 where $D_\Omega$ is a positive constant defined by
 \begin{align*}
  D_\Omega= \frac{1}{4\pi}\int_\Omega   \frac{k(\lambda)}{\lambda^3}  d\lambda.
 \end{align*}
  In view of the estimate in Corollary~\ref{corACest2} and the bound on~\eqref{eqnNconvalpha}, we conclude that 
   \begin{align*}
  \mu(\Omega) & \geq  D_\Omega \E^{\frac{- M}{4 D_\Omega}} > 0.
 \end{align*}
  Since all Borel measures on $\R$ are regular, this readily implies that $\mu(\Omega)$ is positive for every Borel set $\Omega\subseteq(-\infty,-1]\cup[1,\infty)$ of positive Lebesgue measure. 
  Thus, we have finally verified that the essential spectrum of $S$ coincides with the set $(-\infty,-1]\cup[1,\infty)$ and the absolutely continuous spectrum of $S$ is essentially supported on $(-\infty,-1]\cup[1,\infty)$. 
  
 In order to finish the proof of Theorem~\ref{thmalpha}, let us suppose  that $S$ is a generalized indefinite string $(L,\omega,\dip)$ such that  $L$ is infinite and~\eqref{eqnCondSalpha} holds for a real constant $c$ and positive constants $\alpha$ and $\eta$. 
 We consider the generalized indefinite string $(L,\tilde{\omega},\tilde{\dip})$, where $\tilde{\omega}$ is defined via its normalized anti-derivative $\tilde{\Wr}$ by
  \begin{align*}
    \tilde{\Wr}(x) & = \frac{\Wr(x/\alpha) - c}{\eta}
  \end{align*}
  for almost all $x\in[0,\infty)$ and $\tilde{\dip}$ is defined by 
  \begin{align*}
    \int_B d\tilde{\dip} = \frac{\alpha}{\eta^2} \int_{B/\alpha} d\dip = \frac{1}{\eta^2} \int_B \rho(x/\alpha)^2 dx + \frac{\alpha}{\eta^2} \int_{B/\alpha} d\dip_\sing
  \end{align*} 
  for every Borel set $B\subseteq[0,\infty)$. 
  Since $(L,\tilde{\omega},\tilde{\dip})$ satisfies the assumptions imposed before, we infer that the essential spectrum of $(L,\tilde{\omega},\tilde{\dip})$ coincides with the set $(-\infty,-1]\cup[1,\infty)$ and its absolutely continuous spectrum is essentially supported on $(-\infty,-1]\cup[1,\infty)$. 
  However, as the corresponding Weyl--Titchmarsh functions $m$ and $\tilde{m}$ are related via 
  \begin{align*}
    m(z) = \eta\, \tilde{m}(\eta z/\alpha) + c, \quad z\in\C\backslash\R, 
  \end{align*} 
 we see that the essential spectrum of $S$ coincides with the set $(-\infty,-\alpha/\eta]\cup[\alpha/\eta,\infty)$ and its absolutely continuous spectrum is essentially supported on $(-\infty,-\alpha/\eta]\cup[\alpha/\eta,\infty)$.
\end{proof}

 \section{The conservative Camassa--Holm flow}\label{secAPP}

  In this section, we are going to demonstrate how our results apply to the isospectral problem of the conservative Camassa--Holm flow. 
  To this end, let $u$ be a real-valued function in $H^1_\loc[0,\infty)$ and $\dip$ be a non-negative Borel measure on $[0,\infty)$. 
  We define the distribution $\omega$ in $H^{-1}_\loc[0,\infty)$ by  
\begin{align}\label{eqnDefomega}
 \omega(h) = \int_0^\infty u(x)h(x)dx + \int_0^\infty u'(x)h'(x)dx, \quad h\in H^1_\cc[0,\infty),
\end{align}
so that $\omega = u - u''$ in a distributional sense. 
 Now the isospectral problem of the conservative Camassa--Holm flow is associated with the differential equation
 \begin{align}\label{eqnCHISP}
 -g'' + \frac{1}{4} g = z\, \omega\, g + z^2 \dip\, g,
\end{align}
where $z$ is a spectral parameter.
 Just like for generalized indefinite strings, this differential equation has to be understood in a weak sense in general:   
  A solution of~\eqref{eqnCHISP} is a function $g\in H^1_{\loc}[0,\infty)$ such that 
 \begin{align}\label{eqnDEweakform}
   \Delta_g h(0) + \int_{0}^\infty g'(x) h'(x) dx + \frac{1}{4} \int_0^\infty g(x)h(x)dx = z\, \omega(gh) + z^2 \dip(g h) 
 \end{align} 
 for some constant $\Delta_g\in\C$ and every function $h\in H^1_\cc[0,\infty)$.
 For such a solution $g$, the constant $\Delta_g$ is uniquely determined and will be denoted with $g'(0-)$. 
 
It has been demonstrated in \cite[Section~7]{ACSpec} that it is always possible to transform the differential equation~\eqref{eqnCHISP} into the differential equation
    \begin{align}\label{eqntildeString}
    - f'' = z\,\tilde{\omega} f + z^2\tilde{\dip} f
   \end{align}
   for some corresponding generalized indefinite string $(\infty,\tilde{\omega},\tilde{\dip})$.
 To this end, let us introduce the diffeomorphism $\Sr\colon[0,\infty)\rightarrow[0,\infty)$ by  
 \begin{align}\label{eq:Sdiff}
  \Sr(t) = \log(1+t), \quad t\in[0,\infty),
 \end{align} 
 and define a real-valued measurable function $\tilde{\Wr}$ on $[0,\infty)$ such that   
 \begin{align}\label{eqnDefa}
   \tilde{\Wr}(t)  =  u(0) - \frac{u'(\Sr(t))+ u(\Sr(t))}{1+t}  
 \end{align}
 for almost all $t\in[0,\infty)$, where we note that the right-hand side is well-defined almost everywhere.
 It follows readily that the function $\tilde{\Wr}$ is locally square integrable, so that we can find a real distribution $\tilde{\omega}$ in $H^{-1}_\loc[0,\infty)$ which has $\tilde{\Wr}$ as its normalized anti-derivative. 
 Furthermore, the non-negative Borel measure $\tilde{\dip}$ on $[0,\infty)$ is given by setting 
 \begin{align}\label{eqnDefbeta}
   \tilde{\dip}(B) =  \int_B \frac{1}{1+t}\, d\dip\circ \Sr(t)  = \int_{\Sr(B)} \E^{-x} d\dip(x)
 \end{align}
 for every Borel set $B\subseteq[0,\infty)$.
 This defines a generalized indefinite string $(\infty,\tilde{\omega},\tilde{\dip})$ whose relation to the differential equation~\eqref{eqnCHISP} can be described as follows: 

 \begin{lemma}\label{lem:sol=sol}
   A function $g$ is a solution of the differential equation~\eqref{eqnCHISP} if and only if the function $f$ defined by 
   \begin{align}\label{eqnfg}
     f(t) =   g(\Sr(t)) \sqrt{1+t}, \quad t\in[0,\infty),
   \end{align}
   is a solution of the differential equation~\eqref{eqntildeString}. 
 \end{lemma}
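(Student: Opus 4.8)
The plan is to verify directly that the substitution $f(t) = g(\Sr(t))\sqrt{1+t}$ transforms the weak formulation of~\eqref{eqnCHISP} into the weak formulation of~\eqref{eqntildeString}. First I would record the elementary calculus facts associated with the diffeomorphism $\Sr(t) = \log(1+t)$: we have $\Sr'(t) = (1+t)^{-1}$, its inverse is $\Sr^{-1}(x) = \E^x - 1$, and for a test function $h\in H^1_\cc[0,\infty)$ the pullback $\tilde{h}(x) = h(\Sr^{-1}(x))\E^{-x/2}$ (or a suitable variant) again has compact support and lies in $H^1_\cc[0,\infty)$; this gives a bijective correspondence between test functions on the two sides. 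The point of the factor $\sqrt{1+t}$ in~\eqref{eqnfg} is precisely to absorb the first-order term that would otherwise appear upon changing variables in $-g''$, turning it into the potential-free form on the right-hand side together with the shift by $\tfrac14$; indeed, if $g$ is (locally) twice differentiable one computes $f''(t) = (1+t)^{-3/2}\bigl(g''(\Sr(t)) - \tfrac14 g(\Sr(t))\bigr)$, which already makes the equivalence transparent at the classical level.

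The substance of the proof is to carry this through at the level of the weak formulations~\eqref{eqnDEweakform} and the defining identity for solutions of~\eqref{eqntildeString} from Section~\ref{sec:prelim}. I would start from the identity~\eqref{eqnDEweakform} for $g$ against an arbitrary $h\in H^1_\cc[0,\infty)$, substitute $x = \Sr(t)$ in each of the four integrals, and use the relations~\eqref{eqnDefa} and~\eqref{eqnDefbeta} defining $\tilde{\Wr}$ and $\tilde{\dip}$ to rewrite the right-hand side. The term $\int_0^\infty g'(x)h'(x)\,dx + \tfrac14\int_0^\infty g(x)h(x)\,dx$ is the one requiring care: writing $g(x) = f(\Sr^{-1}(x))(1+\Sr^{-1}(x))^{-1/2} = f(\Sr^{-1}(x))\E^{-x/2}$, an integration by parts (legitimate because $f,f'\in L^2_{\loc}$ and the test function has compact support) converts this quadratic form in $g$ into $\int_0^\infty f'(t)\tilde{h}'(t)\,dt$ plus a boundary contribution at $t=0$, where $\tilde{h}(t) = h(\Sr(t))\sqrt{1+t}$ is the transformed test function; the zeroth-order $\tfrac14$ term cancels exactly the cross terms produced by differentiating the weight $\E^{-x/2}$. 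Matching the boundary term identifies $f'(0-)$ with (a multiple of) $g'(0-)$, and matching the distributional right-hand sides against~\eqref{eqnDefomega} identifies $\tilde\omega$ acting on $fh$ with $\omega$ acting on $g\tilde h$ via~\eqref{eqnDefa}. Running the computation in reverse establishes the converse implication, so no separate argument is needed there.

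The main obstacle is the bookkeeping around the weak formulations rather than any conceptual difficulty: one must be careful that the change of variables is valid for the distributional pairings $\omega(gh)$ and $\dip(gh)$ (not just for honest integrals), that the test-function correspondence $h\leftrightarrow\tilde h$ genuinely preserves $H^1_\cc$ and is onto, and that the boundary terms at $t=0$ (where the map $\Sr$ is well-behaved but the weight $\sqrt{1+t}$ contributes) are tracked correctly so that the constants $\Delta_g = g'(0-)$ and $\Delta_f = f'(0-)$ are matched. A clean way to organise this is to first prove the equivalence for test functions supported in $(0,\infty)$ — where there are no boundary terms and the identity is a pure change of variables plus one integration by parts — and then handle general $h\in H^1_\cc[0,\infty)$ by examining the behaviour near $0$, which pins down the relation between $\Delta_g$ and $\Delta_f$. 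Once the identity of the two weak formulations is established for all admissible test functions, the lemma follows immediately since solutions of~\eqref{eqnCHISP} and of~\eqref{eqntildeString} are by definition exactly the functions satisfying the respective identities.
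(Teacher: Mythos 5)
Your proposal is correct and is the standard route: the paper itself gives no proof of this lemma but defers it to \cite[Section~7]{ACSpec}, where essentially the same direct verification at the level of the weak formulations is carried out, and your key classical identity $f''(t)=(1+t)^{-3/2}\bigl(g''(\Sr(t))-\tfrac14 g(\Sr(t))\bigr)$ together with the test-function correspondence $\eta(x)=h(\Sr^{-1}(x))\E^{-x/2}$ is exactly what makes the computation close up. One small correction to your sketch: tracking the boundary term at $t=0$ yields the affine relation $f'(0-)=g'(0-)+\tfrac12 g(0)$ rather than a scalar multiple, and it is precisely this extra $\tfrac12 g(0)$ that produces the shift $m(z)=\tilde m(z)-\tfrac{1}{2z}$ in~\eqref{eqnmms}.
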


We now define the Weyl--Titchmarsh function $m$ associated with~\eqref{eqnCHISP}  by 
 \begin{align}
  m(z) =  \frac{\psi'\NLz}{z\psi(z,0)},\quad z\in\C\backslash\R,
 \end{align} 
  where $\psi(z,\redot)$ is the (up to scalar multiples) unique non-trivial solution of the differential equation~\eqref{eqnCHISP} which lies in $H^1[0,\infty)$ and $L^2([0,\infty);\dip)$, guaranteed to exist by~\cite[Corollary 7.2]{ACSpec}. 
 In view of Lemma \ref{lem:sol=sol}, we readily compute that 
 \begin{align}\label{eqnmms}
   m(z) = \tilde{m}(z) - \frac{1}{2z}, \quad z\in\C\backslash\R, 
 \end{align}
where $\tilde{m}$ is the Weyl--Titchmarsh function of the corresponding generalized indefinite string $(\infty,\tilde{\omega},\tilde{\dip})$.  
 In particular, we see that $m$ is a Herglotz--Nevanlinna function and the Borel measure $\mu$ in the corresponding integral representation differs from the one for the generalized indefinite string only by a point mass at zero. 
 The measure $\mu$ is a spectral measure for a suitable self-adjoint realization $\T$ of the spectral problem~\eqref{eqnCHISP}; compare \cite{bebrwe08, LeftDefiniteSL, CHPencil}. 
 Since this establishes an immediate connection between the spectral properties of $\T$ and the corresponding generalized indefinite string $(\infty,\tilde{\omega},\tilde{\dip})$, we may now invoke Theorem~\ref{thmalpha}. 
 
 \begin{theorem}\label{thmApp2CH}
  If the function $u$ belongs to $H^1[0,\infty)$, the singular part $\dip_\sing$ of the measure $\dip$ is finite and $\rho-1$ belongs to $L^2[0,\infty)$, where $\rho$ is the square root of the Radon--Nikod\'ym derivative of $\dip$, then the essential spectrum of $\T$ coincides with the set $(-\infty,-1/2]\cup[1/2,\infty)$ and the absolutely continuous spectrum of $\T$ is essentially supported on $(-\infty,-1/2]\cup[1/2,\infty)$. 
 \end{theorem}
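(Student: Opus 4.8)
The plan is to deduce the statement from Theorem~\ref{thmalpha} applied to the generalized indefinite string $(\infty,\tilde\omega,\tilde\dip)$ associated with~\eqref{eqnCHISP} via~\eqref{eqnDefa} and~\eqref{eqnDefbeta}, and then to pass back to $\T$ using the relation~\eqref{eqnmms}. The correct constants in Theorem~\ref{thmalpha} are $c = u(0)$, $\alpha = 1/2$ and $\eta = 1$, which give the ratio $\alpha/\eta = 1/2$ responsible for the spectral band $(-\infty,-1/2]\cup[1/2,\infty)$.

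First I would unravel the coefficients of $(\infty,\tilde\omega,\tilde\dip)$. Since $\Sr(t) = \log(1+t)$ is a diffeomorphism of $[0,\infty)$, writing $d\dip = \rho^2\,dx + d\dip_\sing$ and using~\eqref{eqnDefbeta} shows that the square root of the Radon--Nikod\'ym derivative of $\tilde\dip$ is $\tilde\rho(t) = \rho(\Sr(t))/(1+t)$ and that $\tilde\dip_\sing$ is the push-forward of the measure $\E^{-x}\,d\dip_\sing(x)$ under $\Sr^{-1}$. Next I would insert $(\infty,\tilde\omega,\tilde\dip)$ into condition~\eqref{eqnCondSalpha}: substituting $x = \Sr(t)$, so that $1+t = \E^x$ and $dt = \E^x\,dx$, and using the formula~\eqref{eqnDefa} for $\tilde\Wr$, a routine computation yields
\begin{align*}
 \int_0^\infty |\tilde\Wr(t) - u(0)|^2\, t\, dt & = \int_0^\infty |u'(x) + u(x)|^2\,(1 - \E^{-x})\, dx, \\
 \int_0^\infty \Bigl| \tilde\rho(t) - \frac{1}{1+t} \Bigr|^2 t\, dt & = \int_0^\infty |\rho(x) - 1|^2\,(1 - \E^{-x})\, dx, \\
 \int_{[0,\infty)} t\, d\tilde\dip_\sing(t) & = \int_{[0,\infty)} (1 - \E^{-x})\, d\dip_\sing(x).
\end{align*}
Since $0 \le 1 - \E^{-x} \le 1$, the three right-hand sides are dominated by $\|u'+u\|_{L^2[0,\infty)}^2$, $\|\rho-1\|_{L^2[0,\infty)}^2$ and $\dip_\sing([0,\infty))$, all of which are finite by hypothesis. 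Hence~\eqref{eqnCondSalpha} holds with $c = u(0)$, $\alpha = 1/2$, $\eta = 1$, and Theorem~\ref{thmalpha} shows that the essential spectrum of $(\infty,\tilde\omega,\tilde\dip)$ is $(-\infty,-1/2]\cup[1/2,\infty)$ and that its spectral measure $\tilde\mu$ assigns positive mass to every subset of this set that has positive Lebesgue measure.

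Finally, I would transfer the conclusion to $\T$. By~\eqref{eqnmms} the Weyl--Titchmarsh function of $\T$ is $m(z) = \tilde m(z) - 1/(2z)$, so the spectral measure $\mu$ of $\T$ equals $\tilde\mu$ plus a point mass at the origin. Because $0$ lies in the gap $(-1/2,1/2)$ and is not an eigenvalue of $(\infty,\tilde\omega,\tilde\dip)$ (as $L=\infty$), the measure $\tilde\mu$ vanishes on a neighbourhood of $0$, so the extra atom merely produces an isolated eigenvalue at $0$ and alters neither the essential spectrum nor the absolutely continuous part of the spectral measure. This gives $\sigma_{\ess}(\T) = (-\infty,-1/2]\cup[1/2,\infty)$ and the absolutely continuous spectrum of $\T$ essentially supported on the same set. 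The only genuine work is the change-of-variables bookkeeping for $\tilde\Wr$ and $\tilde\dip$ above; the mildly delicate points are identifying the singular part of $\tilde\dip$ under the transformation $\Sr$ and checking that the point mass at $0$ coming from~\eqref{eqnmms} is spectrally harmless.
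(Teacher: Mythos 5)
Your proposal is correct and follows essentially the same route as the paper: verify condition~\eqref{eqnCondSalpha} for $(\infty,\tilde{\omega},\tilde{\dip})$ via the substitution $x=\Sr(t)$, apply Theorem~\ref{thmalpha} with $c=u(0)$, $\alpha=1/2$, $\eta=1$, and transfer the result to $\T$ through~\eqref{eqnmms}. Your exact identities with the factor $1-\E^{-x}$ are a slightly sharper form of the paper's inequality estimates, but the argument is the same.
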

 
 \begin{proof} 
  Under these assumptions, we readily see that the coefficients of the corresponding generalized indefinite string $(\infty,\tilde{\omega},\tilde{\dip})$ satisfy 
 \begin{align*}
  \int_0^\infty |\tilde{\Wr}(t)  -u(0) |^2 t\, dt & \leq \int_0^\infty |u'(\Sr(t))+ u(\Sr(t))|^2 \frac{1}{1+t} dt  \\
     & = \int_0^\infty |u'(x)+ u(x)|^2 dx < \infty,
  \end{align*}
  upon performing a substitution, as well as 
   \begin{align*}
 \int_0^\infty \Bigl|\tilde{\rho}(t)  - \frac{1}{1+t} \Bigr|^2t\, dt  &=  \int_0^\infty \Bigl|\frac{\rho(\Sr(t))}{1+t}  - \frac{1}{1+t} \Bigr|^2t\, dt \\
  & \le  \int_0^\infty |\rho(\Sr(t))   - 1 |^2 \frac{1}{1+t}dt \\
  & =  \int_0^\infty | \rho(x)  - 1|^2 dx < \infty,
    \end{align*}
    where $\tilde{\rho}$ is the square root of the Radon--Nikod\'ym derivative of $\tilde{\dip}$, and 
  \begin{align*}
   \int_{[0,\infty)} t\, d\tilde{\dip}_\sing(t) = \int_{[0,\infty)} \frac{t}{1+t} d\dip_\sing\circ\Sr(t) \leq \int_{[0,\infty)} d\dip_\sing\circ\Sr = \int_{[0,\infty)} d\dip_\sing <\infty,
 \end{align*}
 where $\tilde{\dip}_\sing$ is the singular part of $\tilde{\dip}$.
  Now the claim follows from Theorem~\ref{thmalpha} with $c=u(0)$, $\alpha=1/2$ and $\eta=1$. 
 \end{proof}

Of course, it is also desirable to consider the spectral problem for~\eqref{eqnCHISP} on the whole real line. 
In this case, one can show,  using a standard argument based on stability of the absolutely continuous spectrum under finite rank perturbations, that the essential spectrum coincides with the set $(-\infty,-1/2]\cup[1/2,\infty)$ and the absolutely continuous spectrum is of multiplicity two and essentially supported on $(-\infty,-1/2]\cup[1/2,\infty)$ if $u$ is a real-valued function in $H^1(\R)$ and $\dip$ is a non-negative Borel measure on $\R$ such that its singular part is finite and $\rho-1$ belongs to $L^2(\R)$, where $\rho$ is the square root of the Radon--Nikod\'ym derivative of $\dip$;  see Theorem~\ref{thm2CHac}.

\bigskip
\noindent
{\bf Acknowledgments.}
We thank Rossen Ivanov for useful discussions and hints with respect to the literature.

\end{document}